\documentclass[a4paper,reqno,11pt]{amsart} 

\usepackage[T1]{fontenc}
\usepackage[utf8]{inputenc}
\usepackage{xspace,
	amsfonts}
\usepackage{amsmath} 
\usepackage{amssymb}
\usepackage{graphicx,
	bbm,
	tikz,
	subfig}
\usepackage{xfrac}
\usepackage{dsfont}
\usepackage{amsthm}
\usepackage{mathtools}
\usepackage{enumitem}
\usepackage{verbatim}
\usepackage[autostyle]{csquotes}

\usepackage{cite}

\mathtoolsset{showonlyrefs}

\usepackage{geometry}
\newtheorem{theorem}{Theorem}[section]
\newtheorem{lemma}[theorem]{Lemma}
\newtheorem{proposition}[theorem]{Proposition}
\newtheorem{corollary}[theorem]{Corollary}

\newtheorem{remark}[theorem]{Remark}

\newcommand{\R}{\mathbb{R}}

\newcommand{\G}{\mathcal{G}}

\newcommand{\E}{\mathbb{E}}

\newcommand{\V}{\mathbb{V}}

\newcommand{\J}{\mathcal{J}}

\newcommand{\D}{\mathcal{D}}

\newcommand{\dd}{\,d}

\newcommand{\NN}{\mathcal N}
\newcommand{\LL}{\mathcal L}
\newcommand{\JJ}{\mathcal J}
\newcommand{\EE}{\mathcal{E}}

\newcommand\vv{\textsc{v}}

\tikzstyle{nodino}=[circle,draw,fill,inner sep=0pt,minimum size=0.5mm]
\tikzstyle{infinito}=[circle,inner sep=0pt,minimum size=0mm]
\tikzstyle{nodo}=[circle,draw,fill,inner sep=0pt, minimum size=0.5*width("k")]
\tikzstyle{nodo_vuoto}=[circle,draw,inner sep=0pt, minimum size=0.5*width("k")]
\usetikzlibrary{graphs}
\tikzset{every loop/.style={min distance=10mm,in=300,out=240,looseness=10}}
\tikzset{place/.style={circle,thick,draw=blue!75,fill=blue!20,minimum
		size=6mm}}
\tikzset{place2/.style={circle,thick,draw=red!75,fill=red!20,minimum
		size=6mm}}

\title[ ]{Uniqueness and non--uniqueness of least energy normalized solutions for nonlinear Schr\"odinger equations on compact metric graphs}
\author[ ]{Simone Dovetta}
\address[Simone Dovetta]{Politecnico di Torino, Dipartimento di Scienze Matematiche ``G.L. Lagrange'', Corso Duca degli Abruzzi, 24, 10129 Torino, Italy}
\email{simone.dovetta@polito.it}

\author[ ]{Lun Guo}
\address[Lun Guo]{School of Mathematics and Statistics, South-Central Minzu University,  Minzu Road 182, 430074 Wuhan,  P.R. China.}
\email{lguo@mails.ccnu.edu.cn}

\begin{document}
	
	\maketitle
	
	\begin{abstract}
		We investigate uniqueness and non-uniqueness of least energy normalized and least energy nodal normalized solutions for nonlinear Schr\"odinger equations on compact metric graphs. We first prove existence of least energy nodal normalized solutions in the $L^2$-subcritical regime and, at the critical exponent, below a graph-dependent threshold. We then establish a conditional non-uniqueness result for slightly $L^2$-subcritical powers and identify broad classes of graphs for which the required condition either holds or fails. In particular, we prove uniqueness of least energy nodal normalized solutions on the interval for every mass and every $p\in(2,6)$. Finally, using ODE and phase-plane techniques, we show that least energy normalized solutions on the interval are unique for $p$ sufficiently close to $2$. Overall, the results reveal a strong dependence of the uniqueness picture on the nonlinearity power, the topology, and the metric of the graph, and a structural difference between the constant sign and the sign-changing settings.
		
	\end{abstract}
	
	\noindent{\small AMS Subject Classification: 35Q55, 35R02, 49J40, 58E30
	}
	\smallskip
	
	\noindent{\small Keywords: nonlinear Schr\"odinger, metric graphs, least energy, nodal solutions, non-uniqueness}
	
	\section{Introduction}
	
	This paper investigates uniqueness and non-uniqueness phenomena for least energy normalized solutions and least energy nodal normalized solutions of nonlinear Schr\"odinger (NLS) equations on compact metric graphs with homogeneous Kirchhoff conditions at the vertices. Specifically, we are interested in solutions to the problem
	\begin{equation}
		\label{eq:nls}
		\begin{cases}
			u''+|u|^{p-2}u=\lambda u & \text{on each edge }e\in\mathbb E_\G\\
			u \text{ is continuous} & \text{on }\G\\
			\sum_{e\succ\vv}u_e'(\vv)=0 & \text{at every vertex }\vv\in\V_\G\\
			\|u\|_{L^2(\G)}^2=\mu\,, & 
		\end{cases}
	\end{equation}
	where $\G=(\V_\G,\E_\G)$ is a connected compact metric graph (i.e. a graph with finitely many vertices and edges, all bounded), and $p>2$, $\lambda\in\R$ and $\mu>0$ are real parameters. 
	
	A solution $u$ to \eqref{eq:nls} is called {\em normalized} because its mass (the $L^2$-norm) is fixed a priori. From a variational point of view, normalized solutions are critical points of the energy functional $E_p:H^1(\G)\to\R$
	\begin{equation}
		\label{eq:energy}
		E_p(u,\G):=\frac12\|u'\|_{L^2(\G)}^2-\frac1p\|u\|_{L^p(\G)}^p
	\end{equation}
	restricted to the mass-constrained space
	\[
	H_\mu^1(\G):=\left\{u\in H^1(\G)\,:\,\|u\|_{L^2(\G)}^2=\mu\right\}.
	\]
	The parameter $\lambda$ in \eqref{eq:nls}, which is sometimes called the frequency of the solution, is then a Lagrange multiplier associated to the mass constraint and it is determined by the formula
	\begin{equation}
		\label{eq:Lu}
		\lambda=\mathcal L_p(u,\G):=\frac{\|u\|_{L^p(\G)}^p-\|u'\|_{L^2(\G)}^2}{\|u\|_{L^2(\G)}^2}\,.
	\end{equation}
	Among all critical points of $E_p$ with prescribed mass, a distinguished role is played by {\em least energy} normalized solutions, namely solutions $u$ to \eqref{eq:nls} minimizing the energy among all solutions with the same mass
	\begin{equation}
	\label{eq:least}
	E_p(u,\G)=\inf\left\{E_p(v,\G)\,:\,v\text{ solves }\eqref{eq:nls}\text{ for some }\lambda\in\R\right\}.
	\end{equation}
	Their nodal counterparts are the {\em least energy nodal }normalized solutions, i.e. sign-changing functions $u$ satisfying \eqref{eq:nls} and
	\begin{equation}
		\label{eq:leastnod}
	E_p(u,\G)=\EE_{p,\G}^{nod}(\mu):=\inf\left\{E_p(v,\G)\,:\,v\text{ is a sign-changing solution of }\eqref{eq:nls}\text{ for some }\lambda\in\R\right\}.
	\end{equation}
	Although this need not hold in full generality, least energy normalized solutions are heuristically expected to have constant sign. The minimization problems \eqref{eq:least} and \eqref{eq:leastnod} are thus meant to select genuinely different classes of solutions.
	
	\smallskip
	Nonlinear Schrödinger equations on metric graphs have attracted intense attention over the last decades. Fueled by potential applications to nonlinear optics and to the propagation of matter waves on ramified structures in the theory of Bose-Einstein condensates (see e.g. \cite{lorenzo} and references therein), these models have been gathering significant interest in the mathematical community, as they often display new phenomena with no direct analogue in the corresponding problems posed on $\R^N$. An extensive literature is now available on the existence of {\em positive} normalized solutions to NLS equations (see e.g. \cite{ABR, ACFN-AIHP-2014, ACFN-JDE-2016, ADST, ASTcmp, AST, AST2, ACT2, ACT1, ACT3, BMP, BDL1, BDL2, BC, BD, BCJS, CGJT, CJS, DT1, GCM, KMPX, LLZ, NP, pankov, PeSc, PS, PSV, ST, SV, T} and the monographs \cite{ASTparma, KNP} for comprehensive overviews on the subject). In particular, a large body of work has been devoted to {\em energy ground states}, defined as global minimizers of the energy among all functions with prescribed mass
	\begin{equation}
		\label{eq:GS}
		E_p(u,\G)=\EE_{p,\G}(\mu):=\inf_{v\in H_\mu^1(\G)}E_p(v,\G)\,.
	\end{equation}  
	Energy ground states play a central role because every ground state is a least energy normalized solution, while, from a variational standpoint, the minimization problem \eqref{eq:GS} on the whole manifold $H_\mu^1(\G)$ is considerably more tractable than the minimization over the set of solutions in \eqref{eq:least}. Moreover, ground states always have constant sign. Thus, whenever a ground state exists, problem \eqref{eq:least} is attained, and its minimizers are distinct from those selected by the nodal problem \eqref{eq:leastnod}.
	
	Existence of energy ground states is by now well understood. No energy ground states exist in the $L^2$-supercritical regime $p>6$, since $\EE_{p,\G}(\mu)=-\infty$ for every $\mu>0$. In the $L^2$-subcritical $2<p<6$ and $L^2$-critical $p=6$ regimes, instead, the problem exhibits a rich phenomenology and it is strongly sensitive to the topology and the metric of the graph, as well as to the actual value of the mass (we refer e.g. to the series of papers \cite{AST,AST2,ASTcmp} and to references therein). 
	
	By contrast, the uniqueness of energy ground states has remained largely unexplored until very recently. This is hardly surprising, as uniqueness questions for NLS equations are notoriously challenging even beyond the framework of metric graphs. On graphs, the problem is further complicated by the lack of symmetry and scale invariance of the underlying domain, which rules out a direct use of the few techniques available in related settings. However, the recent work \cite{D25}, inspired by the analysis developed in \cite{ACT1}, established a rather surprising general non-uniqueness result. Roughly speaking, \cite{D25} shows that, except for the line, the half-line, and their isomorphic variants in the tower of bubbles, every graph admitting ground states for all masses and all $p\in(2,6)$ has non-unique ground states at some mass for slightly $L^2$-subcritical powers $p$. Precisely, for every $p$ sufficiently close to $6$, there exists a mass $\mu_p$ and two ground states $u,v$ of mass $\mu_p$ with distinct Lagrange multipliers
	\[
	\LL_p(u,\G)\neq \LL_p(v,\G)\,.
	\]
	This non-uniqueness result is unexpected for three reasons. First, it is {\em ubiquitous}: it occurs on {\em every} graph for which existence of ground states holds for every mass in the $L^2$-subcritical regime, independently of any further property of the graph. This is the case, for instance, for every compact graph \cite{D18} and for a variety of noncompact ones \cite{AST, AST2, D19}. Second, \cite{DST20} showed that the set of masses admitting two ground states with distinct frequencies is always at most countable, suggesting that such non-uniqueness should be exceptional. Third, it sharply contrasts with previously known uniqueness results for ground states with fixed mass on bounded domains \cite{NTV}. Moreover, it remains unclear whether an analogous phenomenon holds with the same level of generality in higher dimensions (see \cite{DST26} for a discussion on this point in two dimensions).
	
	In contrast with positive solutions, the study of sign-changing solutions to NLS equations on graphs is still at a very early stage. To the best of our knowledge, the only general results on nodal normalized solutions on graphs are those of \cite{GW,JS} (whereas existence results without the mass constraint are given in \cite{DDGST}). In \cite{JS}, the existence of multiple sign-changing normalized solutions to \eqref{eq:nls} is established on compact graphs in the $L^2$-supercritical regime $p>6$, provided the mass is sufficiently small. Using different techniques, \cite{GW} extends this multiplicity result to more general nonlinearities and arbitrary masses $\mu>0$. As far as we know, no general existence result is available when $p\in(2,6]$, and the problem becomes even more delicate when one asks for least energy nodal solutions. Indeed, since on compact graphs ground states exist for every $\mu>0$ and every $p\in(2,6)$ (see \cite{D18}), problems \eqref{eq:least} and \eqref{eq:leastnod} are genuinely distinct, at least in the $L^2$-subcritical regime. However, no direct nodal analogue of the variational formulation \eqref{eq:GS} seems to be available, perhaps because of the difficulty of simultaneously incorporating the mass and the sign constraint into a tractable minimization problem. Letting $u^+=\max(u,0)$
and $u^-=\min(u,0)$ be the positive and negative parts of $u$, two natural candidates would be for instance 
	\[
	\inf_{\substack{u\in H_\mu^1(\G) \\ u^\pm\not\equiv0}}E_p(u,\G)\qquad\text{or}\qquad\inf_{\substack{u^+\in H_{\mu_+}^1(\G),\,u^-\in H_{\mu_-}^1(\G) \\ \mu_++\mu_-=\mu}}E_p(u,\G)\,.
	\]
	However, the first infimum equals the ground state level $\EE_{p,\G}(\mu)$, but it is not attained in the sign-changing class. In the second problem, the two mass constraints generally produce two distinct Lagrange multipliers, so a priori a minimizer need not solve \eqref{eq:nls}. 
	
	\medskip
	The present paper advances this line of research by addressing the following questions:

	\begin{enumerate}
		
		\item[1.] does the non-uniqueness phenomenon established for slightly $L^2$-subcritical energy ground states extend to least energy nodal normalized solutions?
		\smallskip
		
		\item[2.] Is this non-uniqueness confined to a perturbative regime near the $L^2$-critical exponent $p=6$, or does it persist for general $p\in(2,6)$? 
	
	\end{enumerate}
	\smallskip
	
	Clearly, before addressing uniqueness, the existence of least energy nodal normalized solutions must be established. A new approach to this problem was recently developed in \cite{DDGS}. The starting point is the problem obtained from \eqref{eq:nls} by fixing $\lambda$ and dropping the mass constraint
	\begin{equation}
		\label{eq:nlsnomass}
		\begin{cases}
			u''+|u|^{p-2}u=\lambda u & \text{on each edge }e\in\mathbb E_\G\\
			u \text{ is continuous} & \text{on }\G\\
			\sum_{e\succ\vv}u_e'(\vv)=0 & \text{at every vertex }\vv\in\V_\G\,,\\
		\end{cases}
	\end{equation}
	for which well-established variational methods are available. For  fixed $\lambda\in\R$,  let $J_{p,\lambda}:H^1(\G)\to\R$ be the {\em action }functional
	\begin{equation}
		\label{eq:J}
		J_{p,\lambda}(u,\G):=\frac12\|u'\|_{L^2(\G)}^2+\frac\lambda2\|u\|_{L^2(\G)}^2-\frac1p\|u\|_{L^p(\G)}^p\,.
	\end{equation}
	Constant sign solutions to \eqref{eq:nlsnomass} can be obtained by minimizing $J_{p,\lambda}$ on the associated Nehari manifold
	\begin{equation}
		\label{eq:Jlev}
	\JJ_{p,\G}(\lambda):=\inf_{v\in\NN_{p,\lambda}(\G)}J_{p,\lambda}(v,\G)\,,
	\end{equation}
	where
	\[
	\begin{split}
	\NN_{p,\lambda}(\G):=&\,\left\{u\in H^1(\G)\setminus\left\{0\right\}\,:\,J_{p,\lambda}'(u,\G)u=0\right\}\\
	=&\,\left\{u\in H^1(\G)\setminus\left\{0\right\}\,:\,\|u'\|_{L^2(\G)}^2+\lambda\|u\|_{L^2(\G)}^2=\|u\|_{L^p(\G)}^p\right\}\,.
	\end{split}
	\]
	The nodal counterpart is obtained by the minimization problem
	\begin{equation}
		\label{eq:Jnodlev}
	\JJ_{p,\G}^{nod}(\lambda):=\inf_{v\in\NN_{p,\lambda}^{nod}(\G)}J_{p,\lambda}(v,\G)
	\end{equation}
	on the nodal Nehari set
	\[
	\NN_{p,\lambda}^{nod}(\G):=\left\{u\in H^1(\G)\,:\, u^\pm\in\NN_{p,\lambda}(\G)\right\}.
	\]
	Minimizers in \eqref{eq:Jlev} and \eqref{eq:Jnodlev} are usually called {\em action ground states} and {\em nodal action ground states}, respectively, and their existence is often known and, in any case, is more tractable than in the normalized setting. The strategy proposed in \cite{DDGS} is to analyze how the masses of action and nodal action ground states vary with $\lambda$, and then identify regimes in which these minimizers are also least energy normalized and least energy nodal normalized solutions.
	
	In \cite{DDGS} this strategy was successfully implemented for NLS equations on bounded domains of $\R^N$ with homogeneous Dirichlet boundary conditions. Here, we extend this approach to compact metric graphs. This yields our first main result, establishing existence of least energy nodal normalized solutions for every mass in the $L^2$-subcritical regime and for masses up to a threshold value at the $L^2$-critical exponent. 
	
	Recall that a {\em pendant} is an edge with an endpoint of degree one. We set
	\begin{equation}
		\label{eq:muGnod}
		\mu_\G^{nod}:=\begin{cases}
			2\mu_{\R^+} & \text{if }\G\text{ has at least two pendants}\\
			\mu_{\R^+}+\mu_\R & \text{if }\G\text{ has exactly one pendant}\\
			2\mu_\R & \text{if }\G\text{ has no pendant}\,,
		\end{cases}
	\end{equation}
	where $\displaystyle\mu_\R=\sqrt3\pi/2$ and $\displaystyle\mu_{\R^+}=\mu_\R/2$ are the critical masses for the $L^2$-critical ground state problem on the line and on the half-line, respectively (see e.g. \cite{ASTcmp}). 
	
	\begin{theorem}
		\label{thm:nodex}
		Let $\G$ be a compact graph. Assume one of the following holds:
		\begin{itemize}
			\item[(i)] $p\in(2,6)$ and $\mu>0$;
			
			\item[(ii)] $p=6$ and $\mu\in(0,\mu_\G^{nod})$;
			
			\item[(iii)] $p=6$, $\mu=\mu_\G^{nod}$ and $\EE_{6,\G}^{nod}(\mu_\G^{nod})<0$.
		
		\end{itemize}
		Then there exists a least energy nodal normalized solution of \eqref{eq:nls} with mass $\mu$. Moreover, every  least energy nodal normalized solution $u$ is a nodal action ground state in $\NN_{p,\lambda}^{nod}(\G)$, where $\lambda=\LL_{p}(u,\G)$. Conversely, any other nodal action ground state $v\in \NN_{p,\lambda}^{nod}(\G)$ satisfies $\|v\|_{L^2(\G)}^2=\mu$ and it is a least energy nodal normalized solution of \eqref{eq:nls} with mass $\mu$. 
 	\end{theorem}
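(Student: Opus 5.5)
We follow the strategy of \cite{DDGS}: study the nodal action level $\JJ_{p,\G}^{nod}(\lambda)$ as a function of $\lambda$, and pass to the mass-constrained problem through the masses of nodal action ground states.

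\textbf{Step 1: properties of the nodal action level.} For every $\lambda>-\lambda_1$, where $\lambda_1$ is the first nontrivial Kirchhoff eigenvalue (so that, after splitting a nodal function into $u^\pm$, each part sees a coercive quadratic form), we show that $\JJ_{p,\G}^{nod}(\lambda)$ is attained. Take a minimizing sequence in $\NN^{nod}_{p,\lambda}(\G)$ and note that $\|u^\pm\|_{L^p}$ is bounded away from zero. Compactness of $H^1(\G)\hookrightarrow L^q(\G)$ on compact graphs then yields a minimizer. A standard deformation/Miranda-type argument on the map $(s,t)\mapsto J_{p,\lambda}(su^++tu^-)$ shows that this minimizer is a sign-changing solution of \eqref{eq:nlsnomass}.

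Next, $\lambda\mapsto\JJ^{nod}_{p,\G}(\lambda)$ is continuous and increasing. It is also locally Lipschitz, with one-sided derivatives
\[
\partial_\lambda^{\pm}\JJ^{nod}_{p,\G}(\lambda)=\tfrac12\max/\min\{\|u\|_{L^2}^2\,:\,u\text{ nodal action ground state at }\lambda\}.
\]
This follows by plugging a minimizer at $\lambda$, rescaled onto $\NN^{nod}_{p,\lambda'}$ via the factors $s_\pm(\lambda')\to1$, into the problem at $\lambda'$, and conversely. Consequently the set-valued mass map $\lambda\mapsto M(\lambda)$, consisting of the masses of nodal action ground states, is monotone with closed "gaps" filled by its upper and lower envelopes.

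\textbf{Step 2: range of masses.} We show that $M(\lambda)\to0$ as $\lambda\to-\lambda_1^+$ (or at the lower end of the admissible range). For large $\lambda$, a blow-up analysis rescales $u^\pm$ as $\lambda^{1/(p-2)}\phi(\sqrt\lambda\,x)$ concentrating at vertices. This gives the following asymptotics:
\begin{itemize}
\item for $p<6$, $\|u\|_2^2\sim\lambda^{(6-p)/(2(p-2))}\to\infty$;
\item for $p=6$, the limit mass is the sum of the two cheapest concentration masses, which is exactly $\mu_\G^{nod}$ as in \eqref{eq:muGnod}.
\end{itemize}
The latter holds because the profiles are half-solitons at pendant endpoints (mass $\mu_{\R^+}$) or full solitons elsewhere (mass $\mu_\R$), and the positive and negative parts must concentrate at distinct points.

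\textbf{Step 3: linking to least energy nodal solutions.} Fix $\mu$ in the admissible range and pick $\lambda$ such that $2\,\partial^-\JJ^{nod}\le\mu\le2\,\partial^+\JJ^{nod}$. Given any sign-changing solution $v$ of \eqref{eq:nls} with multiplier $\lambda_v$, we have $v\in\NN^{nod}_{p,\lambda_v}$, hence
\[
E_p(v)=J_{p,\lambda_v}(v)-\tfrac{\lambda_v}2\mu\ge\JJ^{nod}(\lambda_v)-\tfrac{\lambda_v}{2}\mu.
\]
By concavity-type properties of $\JJ^{nod}$ (it is an infimum of functions that behave well after the Nehari rescaling; we would attempt to prove $\JJ^{nod}(\lambda')\ge\JJ^{nod}(\lambda)+\tfrac{\mu}2(\lambda'-\lambda)$ using the one-sided derivative bounds together with the monotonicity of $M$), the right-hand side is minimized at $\lambda$. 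There it equals the energy of a nodal action ground state with mass exactly $\mu$, whose existence follows from the intermediate-value step applied to the envelopes. Hence that ground state attains $\EE^{nod}_{p,\G}(\mu)$.

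Equality analysis then gives the remaining claims. A least energy nodal solution must have $J_{p,\lambda_v}(v)=\JJ^{nod}(\lambda_v)$, so it is a nodal action ground state. Conversely, another nodal action ground state at the same $\lambda$ has energy $\JJ^{nod}(\lambda)-\tfrac\lambda2\|w\|_2^2$, and the equality in the supporting line forces $\|w\|_2^2=\mu$.

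In case (iii), the hypothesis $\EE^{nod}<0$ should exclude the concentrating regime $\lambda\to\infty$, where the energy tends to $0$. A minimizing sequence of least energy nodal solutions at masses $\mu_n\uparrow\mu^{nod}_\G$, or directly at that mass, therefore has bounded $\lambda$ and converges.

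\textbf{Main obstacle.} The main obstacle is Step 3's supporting-line inequality, i.e.\ the convexity/concavity structure of $\JJ^{nod}$, needed so that $\lambda$ chosen via the mass is a global minimizer of $\JJ^{nod}(\cdot)-\frac{\mu}{2}\,\cdot$. We would first try to prove that $\JJ^{nod}$ is concave, mimicking the positive case; if this fails, we would use monotonicity of the mass map (from Step 1) and integrate the one-sided derivatives. Step 2 at $p=6$, namely identifying $\mu_\G^{nod}$ exactly, is the second delicate point.
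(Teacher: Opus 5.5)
Your framework (the level $\JJ_{p,\G}^{nod}$ as a function of $\lambda$, one-sided derivatives controlled by masses of nodal action ground states, the bound $E_p(v,\G)\ge\JJ_{p,\G}^{nod}(\lambda_v)-\lambda_v\mu/2$, and the equality analysis) is the right one. However, Step 3 has a genuine gap, and the tools you propose to close it are not available.

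\begin{itemize}
\item \emph{Monotonicity of the mass map} does not follow from Step 1, and it is false in general. Under the hypothesis of Theorem \ref{thm:nonun} (met e.g.\ by the graphs of Theorem \ref{thm:D}), there are $\overline\lambda_1<\overline\lambda_2$ with $Q_{p,\G}^{nod,+}(\overline\lambda_2)<Q_{p,\G}^{nod,-}(\overline\lambda_1)$ for every $p<6$ close to $6$. This lies inside case (i), and it is precisely the mechanism of non-uniqueness.
\item \emph{Concavity} would give supporting lines from above. That is the opposite of the inequality $\JJ_{p,\G}^{nod}(\lambda')\ge\JJ_{p,\G}^{nod}(\lambda)+\frac\mu2(\lambda'-\lambda)$ that you need.
\item \emph{The kinks go the wrong way.} Rescaling a ground state of mass $m$ at $\lambda$ onto $\NN_{p,\lambda'}^{nod}(\G)$ only yields $\JJ_{p,\G}^{nod}(\lambda')\le\JJ_{p,\G}^{nod}(\lambda)+\frac m2(\lambda'-\lambda)+o(|\lambda'-\lambda|)$. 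So the right derivative is at most $Q_{p,\G}^{nod,-}(\lambda)/2$ and the left one at least $Q_{p,\G}^{nod,+}(\lambda)/2$, as in \eqref{eq:derJnod}: kinks are concave.
\end{itemize}
Consequently, if you pick $\lambda$ with $Q_{p,\G}^{nod,-}(\lambda)<\mu<Q_{p,\G}^{nod,+}(\lambda)$, that $\lambda$ is a strict local \emph{maximum} of $\lambda\mapsto\JJ_{p,\G}^{nod}(\lambda)-\lambda\mu/2$. Nothing then guarantees a nodal action ground state of mass exactly $\mu$ there. An intermediate value argument on the envelopes only places $\mu$ in $[Q_{p,\G}^{nod,-}(\lambda),Q_{p,\G}^{nod,+}(\lambda)]$.

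The missing idea is to reverse the order: instead of selecting $\lambda$ through the mass, minimize $f_\mu(\lambda):=\JJ_{p,\G}^{nod}(\lambda)-\lambda\mu/2$ globally over $\R$. This function has the following properties.
\begin{itemize}
\item It is continuous, and it equals $-\lambda\mu/2\ge f_\mu(-\lambda_2(\G))$ for $\lambda\le-\lambda_2(\G)$.
\item By the upper bound in \eqref{eq:lev_l_up_down}, it drops strictly below $f_\mu(-\lambda_2(\G))$ just to the right of $-\lambda_2(\G)$.
\item In cases (i)--(ii) it tends to $+\infty$ as $\lambda\to+\infty$.
\item In case (iii), $f_\mu\to0$ at infinity, while a negative-energy nodal solution $\overline u$ gives $f_\mu(\overline\lambda)\le E_6(\overline u,\G)<0$.
\end{itemize}
Case (iii) requires the asymptotics of the \emph{level}, $\JJ_{6,\G}^{nod}(\lambda)-\lambda\mu_\G^{nod}/2\to0$ as in \eqref{eq:asJ6}. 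The limit of the masses from your Step 2 is enough for (ii), but it does not control $f_{\mu_\G^{nod}}$ at infinity. Your claim that ``the energy tends to $0$'' in the concentrating regime is exactly this estimate, and it has to be proved.

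At a global minimizer $\widetilde\lambda>-\lambda_2(\G)$, the supporting-line inequality holds by definition. Moreover, the right Dini derivatives of $\JJ_{p,\G}^{nod}$ are $\ge\mu/2$ and the left ones are $\le\mu/2$. Together with \eqref{eq:derJnod}, this forces $Q_{p,\G}^{nod,-}(\widetilde\lambda)=Q_{p,\G}^{nod,+}(\widetilde\lambda)=\mu$. Your chain $E_p(v,\G)\ge f_\mu(\lambda_v)\ge f_\mu(\widetilde\lambda)$ then concludes. In the equality case, $\lambda_v$ is itself a global minimizer of $f_\mu$, so the same derivative argument gives the converse statement.
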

 	
 	The existence of solutions below a threshold determined solely by the number of pendants of the graph is a characteristic feature of these $L^2$-critical problems, already observed for energy ground states in \cite[Theorem 1.2]{D18}. In contrast with \cite{D18}, however, Theorem \ref{thm:nodex} leaves open the possibility that least energy nodal normalized solutions may also exist above this threshold.
 	
 	With the existence issue settled, we now turn to the question whether the non-uniqueness phenomenon established for energy ground states has a nodal counterpart. Our first result in this direction is the following. 
 	\begin{theorem}
 		\label{thm:nonun} 	
 	Let $\G$ be a compact graph such that $\EE_{6,\G}^{nod}(\mu_\G^{nod})<0$. Then there exists a value $p_\G>2$ (depending on $\G$) such that, for every $p\in[p_\G,6)$, there exist $\mu_p>0$ and two least energy nodal normalized solutions $u,v$ to \eqref{eq:nls} with mass $\mu_p$ and $\LL_{p}(u,\G)\neq\LL_p(v,\G)$. 
 	\end{theorem}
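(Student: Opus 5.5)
The strategy follows the approach of \cite{D25}, adapted to the nodal setting through Theorem \ref{thm:nodex}, which identifies least energy nodal normalized solutions with nodal action ground states. For fixed $p\in(2,6)$ and $\lambda$ large, consider the set of masses
\[
M_p(\lambda):=\left\{\|u\|_{L^2(\G)}^2\,:\,u\text{ is a nodal action ground state in }\NN_{p,\lambda}^{nod}(\G)\right\}.
\]
The argument rests on a standard convexity fact: the map $\lambda\mapsto\JJ_{p,\G}^{nod}(\lambda)$ is concave, being an infimum of affine functions in $\lambda$ after reparametrization over the Nehari fibers. Its one-sided derivatives are $\tfrac12\max M_p(\lambda)$ and $\tfrac12\min M_p(\lambda)$. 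It therefore suffices to show that the masses of nodal action ground states are \emph{not monotone} in $\lambda$. Combined with the intermediate-value structure coming from concavity, non-monotonicity yields a mass $\mu_p$ attained by nodal action ground states at two distinct frequencies $\lambda_1\neq\lambda_2$. The converse part of Theorem \ref{thm:nodex} then upgrades both to least energy nodal normalized solutions of mass $\mu_p$, provided it is also checked that the least energy nodal level at mass $\mu_p$ is achieved by both. This last point should follow from a Legendre-type relation $\EE_{p,\G}^{nod}(\mu)=\sup_\lambda\bigl(\JJ_{p,\G}^{nod}(\lambda)-\tfrac\lambda2\mu\bigr)$, which is established in the course of proving Theorem \ref{thm:nodex}. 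At a mass where the supremum is attained at two values of $\lambda$, it produces two solutions with distinct Lagrange multipliers.

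Non-monotonicity is obtained by comparing two asymptotic regimes.

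\textbf{Regime $\lambda\to+\infty$.} Blow-up analysis shows that nodal action ground states concentrate as a positive and a negative bump, each located at the most favourable place: a pendant vertex, behaving like a half-line soliton, or otherwise an interior point, behaving like a line soliton. By scaling, the mass therefore behaves like $\mu_\G^{nod,p}\,\lambda^{\frac{2}{p-2}-\frac12}$, where $\mu_\G^{nod,p}$ is the $p$-analogue of \eqref{eq:muGnod}. For $p<6$ the exponent is positive, so the mass tends to $+\infty$.

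\textbf{Regime $p=6$.} The hypothesis $\EE_{6,\G}^{nod}(\mu_\G^{nod})<0$, together with Theorem \ref{thm:nodex}(iii), gives a nodal action ground state $w$ at $p=6$ with mass $\mu_\G^{nod}$ and some frequency $\bar\lambda$. Since its energy is negative, the quantity $\JJ_{6,\G}^{nod}(\bar\lambda)-\tfrac{\bar\lambda}{2}\mu_\G^{nod}$ is negative. On the other hand, as $\lambda\to\infty$ at $p=6$ the masses converge to $\mu_\G^{nod}$ from above, because the corresponding energies tend to $0^-$ or are nonnegative.

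Strict negativity of the energy at the finite frequency $\bar\lambda$ is then transported to $p$ close to $6$ by continuity of $\JJ_{p,\G}^{nod}(\lambda)$ in $p$, uniformly on compact $\lambda$-sets. This continuity can be proved with the Nehari fiber maps, using test functions $t^\pm w^\pm$. The outcome is that, for $p$ near $6$, there is $\lambda_1$ at which some nodal action ground state has mass strictly larger than the masses at some $\lambda_2>\lambda_1$; such a $\lambda_2$ exists because for $p$ near $6$ the growth exponent $\frac{2}{p-2}-\frac12$ is small. Yet the masses must eventually diverge as $\lambda\to\infty$. This forces a local decrease followed by an increase, hence a mass value reached at two frequencies.

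The main obstacle is making this comparison quantitative and uniform as $p\to6^-$. One needs precise asymptotics $\|u_\lambda\|_2^2=\mu_\G^{nod,p}\lambda^{\frac{6-p}{2(p-2)}}(1+o(1))$, with control on the $o(1)$ uniform in $p$ near $6$, together with the fact that $\mu_\G^{nod,p}\to\mu_\G^{nod}$. The first thing to try is to argue by contradiction with sequences $p_n\to6$, $\lambda_n$. Assuming monotone masses, one would obtain at $p=6$ a mass function on $[\bar\lambda,\infty)$ that is nondecreasing in $\lambda$, starting at $\mu_\G^{nod}$ with energy $<0$. This would contradict the fact that masses strictly above $\mu_\G^{nod}$ at $p=6$ give $\EE^{nod}=-\infty$-type behaviour, or equivalently that concentrated nodal profiles have energy tending to $0$. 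The delicate part is the concentration-compactness for sign-changing Nehari minimizers, which must show that the two bumps separate and each converges to the appropriate half-line or line soliton.
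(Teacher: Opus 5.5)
\textbf{There is a genuine gap in the decisive step}: passing from non-monotone masses of nodal action ground states to two least energy nodal normalized solutions.

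First, $\JJ^{nod}_{p,\G}$ is not concave. The Nehari set moves with $\lambda$, and on each fibre $\max_{s,t>0}J_{p,\lambda}(su^++tu^-,\G)$ is convex, not affine, in $\lambda$. Concavity would force the masses to be non-increasing in $\lambda$, contradicting your own regime $\lambda\to+\infty$. On $[0,1]$, Proposition \ref{thm:main} even makes the level strictly convex.

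Second, the relation established in the proof of Theorem \ref{thm:nodex} is a minimum, not a supremum: $\EE^{nod}_{p,\G}(\mu)=\min_\lambda f_\mu(\lambda)$ with $f_\mu(\lambda)=\JJ^{nod}_{p,\G}(\lambda)-\lambda\mu/2$. Your supremum is $+\infty$ for $p<6$.

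Most importantly, a mass $\mu$ reached by nodal action ground states at two frequencies only gives two critical points of $f_\mu$. In the typical S-shaped picture, a mass in the fold range is reached at three frequencies: two local minima and one local maximum of $f_\mu$. The two minima have equal value only at one exceptional mass. The converse part of Theorem \ref{thm:nodex} applies only at frequencies that are already multipliers of least energy solutions, i.e. global minimizers of $f_\mu$. So you must still show that for some $\mu$ the global minimum of $f_\mu$ is attained twice, and nothing in your plan does this.

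The paper obtains this from Proposition \ref{prop:Lpm}: multipliers of least energy nodal solutions are strictly increasing in the mass, with limits $-\lambda_2(\G)$ and $+\infty$. If $\Lambda^{nod}_{p,\G}$ had no jump, it would be a bijection onto $(-\lambda_2(\G),+\infty)$. Then, by Theorem \ref{thm:nodex}, all nodal action ground states at each $\lambda$ would share a mass $\mu_\lambda$ strictly increasing in $\lambda$. A reversal $Q^{nod,+}_{p,\G}(\overline\lambda_2)<Q^{nod,-}_{p,\G}(\overline\lambda_1)$ with $\overline\lambda_1<\overline\lambda_2$ forbids this. At a jump, Remark \ref{rem:Lnodatt} yields two solutions realizing $\Lambda^{nod,-}_{p,\G}(\mu)\neq\Lambda^{nod,+}_{p,\G}(\mu)$. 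A convex-envelope argument with the correct min-formula would be an equivalent fix.

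\textbf{Producing the reversal for $p<6$ is also left vague and aimed at the wrong tools.} Transporting negativity of the energy to $p$ near $6$ does not by itself give a mass reversal. Your claim that the $p=6$ masses converge to $\mu^{nod}_\G$ from above is unjustified. Neither uniform-in-$p$ mass asymptotics nor new concentration-compactness is needed. Two ingredients suffice:

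(a) A reversal at two fixed frequencies at $p=6$. All nodal action ground states at $\overline\lambda$ have mass $\mu^{nod}_\G$, and $Q^{nod,\pm}_{6,\G}(\lambda)\to\mu^{nod}_\G$ by \eqref{eq:asM6}. If the masses equalled $\mu^{nod}_\G$ a.e. on $[\overline\lambda,+\infty)$, integrating the derivative would give $f_{\mu^{nod}_\G}\equiv E_6(w,\G)<0$ there. This contradicts $f_{\mu^{nod}_\G}(\lambda)\to0$ from \eqref{eq:asJ6}.

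(b) The semicontinuity \eqref{eq:ptopbar} of $Q^{nod,\pm}_{p,\G}(\lambda)$ in $p$ at fixed $\lambda$. This requires convergence of the nodal action ground states themselves as $p\to6$, including that the limit still changes sign, not merely continuity of the levels.

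Your closing contradiction argument along $p_n\to6$ is essentially (a). It only becomes rigorous once (b) is available to pass monotonicity to the limit.
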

	Theorem \ref{thm:nonun} differs from \cite[Theorem 1.1]{D25} in one crucial respect: whereas non-uniqueness of energy ground states is unconditional on compact graphs, its nodal analogue requires the existence of an $L^2$-critical nodal solution to \eqref{eq:nls} at the threshold mass $\mu_\G^{nod}$ with strictly negative energy. From a technical point of view, the result of \cite{D25} is based on three key ingredients:
	\smallskip
	
	\begin{itemize}
		
		\item[(i)] if $u_1, u_2$ are ground states of $E_p$ with mass $\mu_1<\mu_2$, then $\LL_p(u_1,\G)<\LL_p(u_2,\G)$;
		\smallskip
		
		\item[(ii)] every energy ground state is also an action ground state;
		\smallskip
		
		\item[(iii)] when $p=6$, there exist $\lambda_1<\lambda_2$ and two action ground states $v_1\in\NN_{6,\lambda_1}(\G)$, $v_2\in\NN_{6,\lambda_2}(\G)$ such that $\|v_1\|_{L^2(\G)}>\|v_2\|_{L^2(\G)}$.
	\end{itemize}
	\smallskip
	Properties (i) and (ii) have been proved in \cite[Lemma 4.1]{DST20} and in \cite[Theorem 1.3]{DST23}, and their proofs make essential use of the fact that energy ground states minimize $E_p$ on the whole mass-constrained manifold $H_\mu^1(\G)$. Property (iii) is proved in \cite[Section 4]{D25} and it heavily relies on the fact that $L^2$-critical energy ground states exist not only below a threshold value of the mass, but also {\em at the threshold}. 
	
	For the purposes of Theorem \ref{thm:nonun}, one then needs to understand whether (i), (ii) and (iii) remain true when replacing energy ground states with least energy nodal normalized solutions and action ground states with nodal action ground states. Extending each of these properties to the nodal setting is in fact nontrivial and requires new ideas.  Theorem \ref{thm:nodex} above gives the analogue of (ii), and a deeper analysis of the relation between least energy nodal normalized solutions and nodal action ground states allows us to obtain the desired extension of (i) (see Proposition \ref{prop:Lpm} below). Notably, these two steps do not require any specific assumption on the graph. This is no longer true when turning to (iii). To obtain its nodal counterpart, we need to know that $\EE_{6,\G}^{nod}(\mu_\G^{nod})$ is attained. By Theorem \ref{thm:nodex}, the infimum is attained whenever it is strictly negative. This explains the assumption $\EE_{6,\G}^{nod}(\mu_\G^{nod})<0$ in Theorem \ref{thm:nonun}.
	
	At this stage, however, Theorem \ref{thm:nonun} does not yet show that nodal non-uniqueness is genuinely less ubiquitous than its ground state counterpart. To this end, it is crucial to understand whether the additional assumption $\EE_{6,\G}^{nod}(\mu_\G^{nod})<0$ is actually satisfied by some, or possibly all, compact graphs. Moreover, in our analysis this condition arises from the specific argument used in the proof. A priori, it is therefore not clear whether it reflects a structural feature of the nodal problem or it is merely a technical limitation of the present approach. 
	
	The distinction is in fact genuine. Even if the assumption in Theorem \ref{thm:nonun} may not be the sharpest possible, it is not merely an artifact of the proof, but reflects an inherent feature of the nodal problem. Indeed, we identify a broad class of graphs for which it is satisfied and nodal non-uniqueness follows, while under different topological conditions we prove that it always fails. More strikingly, within this second class we exhibit explicit graphs on which least energy nodal normalized solutions are unique for every mass and every $L^2$-subcritical exponent.
	
	To exhibit graphs to which Theorem \ref{thm:nonun} applies, consider the following construction. Let $\G_1,\G_2$ be two identical copies of the same compact graph with no pendant. Fix two corresponding vertices in $\G_1$ and $\G_2$, attach to each of them an interval of length $\ell>0$ at one of its endpoints, and then glue together the obtained graphs identifying the other endpoints of the intervals of length $\ell$. Denote by $\D_\ell$ the resulting dumbbell graph with a vertex of degree 2 at the middle point of an edge of length $2\ell$. We say that a compact graph $\G$ contains a copy of $\D_\ell$ if $\D_\ell$ is a subgraph of $\G$, i.e. if a copy of $\D_\ell$ is attached to a vertex of $\G$ (see Figure \ref{fig:Dl}).  
	
	\begin{figure}[t]
	\begin{tikzpicture}[scale=0.85]
		
		
		\begin{scope}[shift={(0,0)}]
			
			\node at (0,0) [nodo] {};
			\node at (-1.1,0.9) [nodo] {};
			\node at (-1.1,-0.9) [nodo] {};
			\node at (-2,0) [nodo] {};

			\draw (0,0) -- (-1.1,0.9) -- (-2,0) -- (-1.1,-0.9) -- cycle;
			\draw (-1.1,0.9) -- (-1.1,-0.9);
			\draw (-2,0)--(0,0);

			\node at (3.2,0) [nodo] {};
			\node at (4.3,0.9) [nodo] {};
			\node at (4.3,-0.9) [nodo] {};
			\node at (5.2,0) [nodo] {};
			
			\draw (3.2,0) -- (4.3,0.9) -- (5.2,0) -- (4.3,-0.9) -- cycle;
			\draw (4.3,0.9) -- (4.3,-0.9);
			\draw (3.2,0)--(5.2,0);
		
			\draw (0,0) -- (3.2,0);
			
			\node at (1.6,0) [nodo] {};
			\node at (0.8,-.3) [infinito] {$\small\ell$};
			\node at (2.4,-.3) [infinito] {$\small\ell$};

		\end{scope}

		\begin{scope}[shift={(9.5,-1.5)}]
			
			\node at (0,0) [nodo] {};
			\node at (-1.1,0.9) [nodo] {};
			\node at (-1.1,-0.9) [nodo] {};
			\node at (-2,0) [nodo] {};

			\draw (0,0) -- (-1.1,0.9) -- (-2,0) -- (-1.1,-0.9) -- cycle;
			\draw (-1.1,0.9) -- (-1.1,-0.9);
			\draw (-2,0)--(0,0);
			
			\node at (3.2,0) [nodo] {};
			\node at (4.3,0.9) [nodo] {};
			\node at (4.3,-0.9) [nodo] {};
			\node at (5.2,0) [nodo] {};
			
			\draw (3.2,0) -- (4.3,0.9) -- (5.2,0) -- (4.3,-0.9) -- cycle;
			\draw (4.3,0.9) -- (4.3,-0.9);
			\draw (3.2,0)--(5.2,0);
			
			\draw (0,0) -- (3.2,0);
			
			\node at (1.6,0) [nodo] {};

			\node at (1.6,1.3) [nodo] {};
			\node at (0.5,2.2) [nodo] {};
			\node at (2,2.68) [nodo] {};
			\node at (3,2.1) [nodo] {};
			\node at (2.7,3.5) [nodo] {};
			\node at (0.6,3.5) [nodo] {};
			\node at (1.65,2.9) [nodo] {};
			\node at (1.05,2.55) [nodo] {};

			\draw (1.6,0) -- (1.6,1.3)
			(1.6,1.3) -- (0.5,2.2)
			(1.6,1.3) -- (3,2.1)
			(0.5,2.2) -- (2,2.68)
			(2,2.68) -- (3,2.1)
			(0.5,2.2) -- (0.6,3.5)
			(0.6,3.5) -- (2,2.68)
			(2,2.68) -- (2.7,3.5)
			(2.7,3.5) -- (3,2.1)
			(0.6,3.5) -- (1.05,2.55)
			(2.7,3.5) -- (0.5,2.2)
			(1.6,1.3) -- (2,2.68);

		\end{scope}
		
	\end{tikzpicture}
	\caption{A dumbbell graph $\D_\ell$ (left) and a general compact graph $\G$ containing a copy of $\D_\ell$ (right).}
	\label{fig:Dl}
	\end{figure}
	\begin{theorem}
		\label{thm:D}
		Let $\G$ be a compact graph with no pendant containing a copy of $\D_\ell$. Then $\mu_{\G}^{nod}=2\mu_\R$ and, when $\ell$ is large enough, $\EE_{6,\G}^{nod}(\mu_{\G}^{nod})<0$. Hence, for every $\ell$ sufficiently large, Theorem \ref{thm:nonun} applies to $\G$. 
	\end{theorem}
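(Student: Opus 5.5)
The plan has three steps: read off $\mu_\G^{nod}$, build one sign-changing competitor with negative energy, and invoke Theorem \ref{thm:nonun}.

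The identity $\mu_\G^{nod}=2\mu_\R$ is immediate from \eqref{eq:muGnod}, since $\G$ has no pendant.

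For the energy bound, note that $\EE_{6,\G}^{nod}(2\mu_\R)$ is an infimum over sign-changing solutions of \eqref{eq:nls}, not over all sign-changing functions. So a trial function alone does not suffice. I would instead argue by contradiction, using the structure behind Theorem \ref{thm:nodex}.

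Suppose $\EE_{6,\G}^{nod}(2\mu_\R)\ge0$. For $\mu<2\mu_\R$, Theorem \ref{thm:nodex}(ii) gives least energy nodal normalized solutions $u_\mu$, which are nodal action ground states. I would first show that $\EE_{6,\G}^{nod}(\mu)$ is upper semicontinuous and nonincreasing along $\mu\uparrow 2\mu_\R$. Then negativity of $\EE_{6,\G}^{nod}(\mu)$ for some $\mu$ slightly below $2\mu_\R$ would give negativity at the threshold, up to a compactness argument.

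The cleaner route, which I would try first, is a minimax characterization of the nodal level. Write $\mu_\pm$ for the masses carried by $u^\pm$. The natural description is
\[
\EE_{6,\G}^{nod}(\mu)\le\inf\Big\{E_6(u,\G):u^\pm\in H^1_{\mu_\pm}(\G),\ \mu_++\mu_-=\mu,\ u^\pm\text{ realize the nodal action constraint}\Big\}.
\]
The goal is to exhibit a sign-changing $w\in H^1_{2\mu_\R}(\G)$ with $E_6(w,\G)<0$ that can be deformed onto the nodal Nehari set of some $\lambda$ without raising the energy above $0$. Equivalently, it suffices to find $\lambda$ with $\JJ^{nod}_{6,\G}(\lambda)$ small relative to $\lambda$ times the mass. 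The scaling link is this: at $p=6$, for $u\in\NN_{6,\lambda}$ one has $E_6(u)=J_{6,\lambda}(u)-\frac\lambda2\|u\|_2^2$. Hence a nodal action ground state with mass $\ge2\mu_\R$ and $J<\frac\lambda2\cdot 2\mu_\R$ gives what we need.

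The construction of $w$ uses the dumbbell. Take $w^+$ supported in the half $\G_1\cup[\text{first }\ell\text{-segment}]$ and $w^-=-w^+$ reflected onto the other half. Each half is the graph $\G_1$ with an attached segment of length $\ell$ whose free end is the midpoint vertex; $w^\pm$ vanishes there, and continuity at the degree 2 vertex is automatic. Since $\G_1$ has no pendant but has a cycle or a vertex of degree $\ge3$, each half is a compact graph with a Dirichlet endpoint. As $\ell\to\infty$, this half approximates $\G_1$ with an attached half-line. By \cite{ASTcmp}, on such graphs the critical mass is $\mu_\R$ and the critical ground-state energy at mass $\mu_\R$ is negative. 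More precisely, for compact cores with attached half-lines one gets $\EE_{6}(\mu_\R)<0$ because the core region supports a function beating the soliton, e.g. a soliton concentrated on a vertex of degree $\ge3$, or a cycle. Truncating to length $\ell$ with a cutoff costs $O(e^{-c\ell})$. So for $\ell$ large, each half carries a function of mass $\mu_\R$ and energy $\le-\delta<0$, and by symmetry $w$ has mass $2\mu_\R$ and energy $\le-2\delta$.

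To pass from $w$ to a genuine nodal solution, I would minimize the energy on each half at mass $\mu_\R$ with a Dirichlet condition at the midpoint. Such minimizers exist because the level is negative, which yields compactness even at the critical mass, by the argument of \cite{D18}. Glue the minimizer $\phi$ with $-\phi$ reflected. By symmetry the Lagrange multipliers coincide, and odd reflection preserves the Kirchhoff condition at the midpoint since $\phi'$ matches there. Hence $w=\phi\ominus(-\phi)$ is a sign-changing solution of \eqref{eq:nls} with mass $2\mu_\R$ and negative energy, which gives $\EE_{6,\G}^{nod}(2\mu_\R)<0$ directly, with no contradiction argument needed.

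When $\D_\ell$ is only a subgraph of $\G$ attached at a vertex, the rest of $\G$ must be handled too. I would extend $w$ by $0$ there, which breaks the solution property. So in that case I would fall back on the variational route, Theorem \ref{thm:nodex} plus the $J$–$E$ identity above, using $w$ only as a competitor for $\JJ^{nod}$.

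The main obstacle is the strict negativity on each half at exactly mass $\mu_\R$, uniformly in large $\ell$. This means verifying that a pendant-free compact core joined to a long Dirichlet segment has negative critical energy at the line threshold. I would try a trial function made of the half-soliton of $\R^+$ placed at the vertex of degree $\ge3$ of $\G_1$, with its tail spread over two or more branches; this strictly lowers the kinetic cost, as in the vertex-of-degree-three argument of \cite{ASTcmp}.
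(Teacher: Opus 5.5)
\textbf{Review.} Your core construction is the paper's own argument. You minimize $E_6$ at mass $\mu_\R$ on $\widetilde\G_1=\G_1\cup e_1$ with a Dirichlet condition at the midpoint $\overline\vv$. The uniform lower bound and, for large $\ell$, strict negativity come from comparison with $\overline\G_1$ (the graph with $e_1$ replaced by a half-line), via \cite[Theorem 3.3]{ASTcmp}, plus truncation of its exponentially decaying minimizer. Attainment comes from the argument of \cite{D18}. Finally, you glue $\widetilde u\ge0$ with $-\widetilde u$ on the identical copy, so the multipliers coincide and the Kirchhoff sum at $\overline\vv$ is $-\widetilde u'(\ell)+\widetilde u'(\ell)=0$. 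Your ``main obstacle'' is therefore already settled by the citation you give; no ad hoc trial function at a vertex of degree $\ge3$ is needed.

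\textbf{The zero extension does not break the solution property.} The one real error is your claim that extending by $0$ to the rest of $\G$ destroys the solution property. The extra subgraph is attached to $\D_\ell$ only at $\overline\vv$, where the glued function vanishes. So the zero extension is continuous. It solves the equation trivially on every extra edge and satisfies Kirchhoff trivially at every other vertex of the extra subgraph. At $\overline\vv$ the extra edges contribute zero derivatives, so the sum is still $-\widetilde u'(\ell)+\widetilde u'(\ell)+0=0$. The extended function is therefore a sign-changing solution of \eqref{eq:nls} on $\G$ with mass $2\mu_\R$ and energy $2E_6(\widetilde u,\widetilde\G_1)<0$. This is exactly how the paper concludes, and your fallback is unnecessary.

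\textbf{The fallback as written is circular.} The only case of Theorem \ref{thm:nodex} covering $\mu=\mu_\G^{nod}$ is (iii), and its hypothesis is the inequality you are trying to prove. The fallback can be made correct only by going inside that proof. By symmetry, $w^\pm\in\NN_{6,\lambda}(\G)$ for the single value $\lambda=\LL_6(w^+,\G)$. Hence $f_{2\mu_\R}(\lambda)\le J_{6,\lambda}(w,\G)-\lambda\mu_\R=E_6(w,\G)<0$. Minimizing $f_{2\mu_\R}$ as in that proof, using \eqref{eq:fmu1}, \eqref{eq:fmu2} and \eqref{eq:fmu4}, then gives a least energy nodal normalized solution at level $\min f_{2\mu_\R}<0$. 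Done this way it is a legitimate variant, and it would even spare you the attainment step, since a truncated competitor vanishing at $\overline\vv$ suffices.

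Your intermediate criterion, ``a nodal action ground state with mass $\ge2\mu_\R$ and $J<\lambda\mu_\R$'', is not the right one: such a state has the wrong mass unless equality holds. The correct criterion is $f_{2\mu_\R}(\lambda)<0$ for some $\lambda$.
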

	Theorem \ref{thm:D} shows that not only the topology, but also the metric plays a crucial role in the occurrence of the non-uniqueness phenomenon. The class of graphs identified in Theorem \ref{thm:D} is rather general, since the graphs $\G_1,\G_2$ in the dumbbell structure are only required to be identical and to have no pendant. We actually believe that the result may extend to cases in which $G_1$ and $\G_2$ are not equal, but the argument we used to prove Theorem \ref{thm:D} does not work at this level of generality.
	
	In contrast with the previous result, it is not difficult to find graphs where the hypotheses of Theorem \ref{thm:nonun} are not satisfied. Hence, it is easy to see that if $\G$ has either at least two pendants or it admits a cycle covering (i.e. it can be covered by a finite number of cycles, see \cite{ASTcmp}), then $\EE_{6,\G}^{nod}(\mu_\G^{nod})\geq0$ (see Proposition \ref{prop:E>0} below). This obstruction reflects a genuine uniqueness mechanism. In fact, nodal non-uniqueness does not hold in general within these classes. To show this, we consider the simplest compact graph with two pendants, the interval $[0,1]$, on which \eqref{eq:nls} is simply
	\begin{equation}
		\label{eq:main}
		\begin{cases}
		u''+|u|^{p-2}u=\lambda u & \text{in }(0,1)\\
		u'(0)=u'(1)=0 & \\
		\|u\|_{L^2(0,1)}^2=\mu. &  
		\end{cases}
	\end{equation}
	Inspired by the recent papers \cite{ACT1,ACT2}, we analyze the problem of least energy nodal normalized solutions for \eqref{eq:main} by phase-plane methods and a detailed study of the associated period functions. This yields a complete uniqueness result.
	\begin{theorem}
		\label{thm:uniqnod}
		For every $p\in(2,6)$ and every $\mu>0$, up to multiplication by a constant complex phase there exists a unique least energy nodal normalized solution to \eqref{eq:main} with mass $\mu$. 
	\end{theorem}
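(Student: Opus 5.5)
By Theorem \ref{thm:nodex}(i), least energy nodal normalized solutions of \eqref{eq:main} exist for every $\mu>0$, and each one is a nodal action ground state in $\NN_{p,\lambda}^{nod}([0,1])$ with $\lambda=\LL_p(u,[0,1])$. The plan is to combine this with a complete phase-plane description of sign-changing Neumann solutions on the interval. The argument has two parts:
\begin{itemize}
\item[(a)] for each $\lambda$, identify the nodal action ground states;
\item[(b)] show that their mass is a strictly monotone function of $\lambda$.
\end{itemize}
Strict monotonicity gives injectivity of $\lambda\mapsto\mu$. Hence two least energy nodal normalized solutions of the same mass share the same $\lambda$, and uniqueness then reduces to uniqueness of the nodal action ground state at that $\lambda$, up to sign and phase.

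For (a), a real solution of $u''+|u|^{p-2}u=\lambda u$ has the conserved quantity $H=\frac12 u'^2+\frac1p|u|^p-\frac\lambda2u^2$. Neumann conditions force $u$ to start and end at turning points $u'=0$, so $u$ is a closed orbit of the phase plane run for an integer number $k$ of half-periods.
\begin{itemize}
\item For $\lambda\le0$, every nonconstant orbit surrounds the origin. For $\lambda>0$, sign-changing orbits are exactly those outside the homoclinic loops.
\item A sign-changing Neumann solution is a half-period orbit $u(x)=U(Tx)$ traversed $k\ge1$ times, so it has $k$ nodes.
\end{itemize}
I then show that the nodal action ground state has exactly one node ($k=1$). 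A solution with $k\ge2$ nodes splits into $k+1$ nodal pieces, each on the Nehari manifold of its own support. Gluing the outermost piece of one sign with the adjacent piece of the opposite sign, and rescaling onto $[0,1]$ using the scaling $\lambda\mapsto\lambda T^2$ of $J$ restricted to Nehari, should produce an element of $\NN^{nod}_{p,\lambda}$ with strictly smaller action. This is the standard argument that ground states on the interval are monotone, adapted to two sign pieces.

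With $k=1$, $u$ is monotone, and by symmetry of the orbit $u(1-x)=-u(x)$. It is determined by the energy level $H=h$ through the half-period relation $T(h,\lambda)=1$. The half-period function is monotone in $h$ on the relevant range, which follows from the known monotonicity of period functions for $|u|^{p-2}u$ nonlinearities; this is the same tool used in \cite{ACT1,ACT2}. Hence for each $\lambda$ above the second Neumann eigenvalue $-\pi^2$ there is exactly one such solution up to sign. Below that value there are none, consistent with the nodal Nehari set being empty.

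For (b), I parametrize the unique branch by $\lambda\in(-\pi^2,\infty)$ and compute its mass $\mu(\lambda)=\int_0^1u_\lambda^2$ via the phase-plane integral. The steps are:
\begin{itemize}
\item Rescale to the amplitude $a=u(0)$ and write $\lambda$ and $\mu$ as explicit integrals in $a$, of the form $\int_0^a\frac{s^2\,ds}{\sqrt{2(h-F_\lambda(s))}}$.
\item Show that $\frac{d\mu}{d\lambda}>0$ for $p<6$.
\item Check the endpoints: $\mu\to0$ as $\lambda\to-\pi^2$ by bifurcation from the eigenfunction $\cos(\pi x)$, and $\mu\to\infty$ as $\lambda\to\infty$ by the scaling to two half-line solitons, each of mass $\sim\lambda^{\frac{2}{p-2}-\frac12}$.
\end{itemize}
A cleaner route may be to use the symmetry $u(1-x)=-u(x)$. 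The one-node solution is then an odd reflection of a positive solution on $[0,\tfrac12]$ with Neumann condition at $0$ and Dirichlet condition at $\tfrac12$. Monotonicity of the mass for positive mixed Neumann–Dirichlet solutions of power type is a Sturm-type ODE statement, which I would prove by differentiating the period identity in $\lambda$ and using subcriticality $p<6$.

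The main obstacle is this strict monotonicity of $\lambda\mapsto\mu(\lambda)$ for every $p\in(2,6)$ and every $\lambda$, not only asymptotically. I would attempt it by deriving a formula for $\mu'(\lambda)$ as a sign-definite integral, in the spirit of the period-function analyses of \cite{ACT1}.
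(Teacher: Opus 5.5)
\textbf{Verdict: there is a genuine gap.} Your architecture is the one the paper uses. Theorem \ref{thm:nodex} reduces the problem to nodal action ground states. These have two nodal regions, so up to sign they are monotone with $u(1-x)=-u(x)$. Uniqueness then follows from uniqueness at fixed $\lambda$ together with injectivity of $\lambda\mapsto\mu$. However, the step that carries the theorem, strict monotonicity of the mass in $\lambda$ for every $p\in(2,6)$, is only announced, not proved.

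\textbf{The missing idea.} Your suggested route takes the amplitude $a$ as parameter and then differentiates. There $\lambda$ depends on $a$ only implicitly, through the length constraint, so you get no sign-definite expression for free. The paper instead uses the scale-invariant parameter $\xi:=\lambda a^{2-p}\in(-\infty,2/p)$. Substitute $t=as$ in the quadrature on $[0,1/2]$ and set
\[
D_{p,\xi}(s)=\tfrac2p(1-s^p)-\xi(1-s^2),\qquad I_0(\xi)=\int_0^1D_{p,\xi}^{-1/2}\,ds,\qquad I_2(\xi)=\int_0^1s^2D_{p,\xi}^{-1/2}\,ds .
\]
The condition $u(1/2)=0$ becomes the explicit relation $a^{p-2}=4I_0(\xi)^2$. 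Hence
\[
\lambda=4\xi I_0(\xi)^2,\qquad \mu=2a^{\frac{6-p}{2}}I_2(\xi)=2\bigl(2I_0(\xi)\bigr)^{\frac{6-p}{p-2}}I_2(\xi).
\]
Since $\partial_\xi D_{p,\xi}=-(1-s^2)\le0$, both $I_0$ and $I_2$ are strictly increasing. For $p<6$ the exponent $\frac{6-p}{p-2}$ is positive, so $\mu$ is a product of increasing positive factors. Moreover
\[
I_0(\xi)+2\xi I_0'(\xi)=\int_0^1\frac{\frac2p(1-s^p)}{D_{p,\xi}(s)^{3/2}}\,ds>0,
\]
so $\frac{d\lambda}{d\xi}=4I_0\,(I_0+2\xi I_0')>0$, and $\xi\mapsto\lambda$ maps $(-\infty,2/p)$ bijectively onto $(-\pi^2,+\infty)$.

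This last computation is precisely the half-period monotonicity you delegated to the period-function literature in \cite{ACT1,ACT2}. It gives uniqueness of the one-node solution at each $\lambda$ directly. The endpoint limits of $\mu$ you planned to verify are not needed, because existence already comes from Theorem \ref{thm:nodex}.

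\textbf{Two smaller issues in part (a).}
\begin{itemize}
\item Rescaling a sub-interval of length $L<1$ onto $[0,1]$ turns a solution with frequency $\lambda$ into one with frequency $\lambda L^2$. So it gives an element of $\NN^{nod}_{p,\lambda L^2}$, not a competitor in $\NN^{nod}_{p,\lambda}$. Instead, keep one positive and one adjacent negative nodal piece and extend by zero. This stays in $\NN^{nod}_{p,\lambda}$ and lowers the action by the positive action of the discarded pieces, which is the argument of Remark \ref{rem:2nod}.
\item For $\lambda\le-\pi^2$ the nodal Nehari set is not empty. What actually happens there is that $\JJ^{nod}_{p,[0,1]}(\lambda)=0$ and this level is not attained.
\end{itemize}
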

	The interval therefore provides a sharp separation between the nodal and positive theories: least energy nodal normalized solutions are unique for every $p\in(2,6)$, whereas energy ground states are non-unique at some mass when $p$ is sufficiently close to $6$.
	
	In fact, the ODE approach developed for Theorem \ref{thm:uniqnod} can be refined to shed further light on the nature of this non-uniqueness phenomenon. Even for energy ground states, non-uniqueness is currently known only for $p$ sufficiently close to $6$, and whether it persists away from the critical regime has remained open. Our ODE analysis gives a negative answer on the interval.
	\begin{theorem}
		\label{thm:uniq}
		There exists $\varepsilon>0$ such that, for every $p\in(2,2+\varepsilon)$ and every $\mu>0$, up to multiplication by a constant complex phase and the reflection $x\mapsto 1-x$, there exists a unique energy ground state of $E_p$ on $[0,1]$ with mass $\mu$. 
	\end{theorem}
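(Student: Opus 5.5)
We prove Theorem \ref{thm:uniq} by reducing the ground state problem on $[0,1]$ to a monotonicity statement for a period function, and then checking that monotonicity perturbatively at $p=2$.

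\textbf{Step 1: profile of ground states.} A ground state on $[0,1]$ has constant sign, so we take it positive. By standard rearrangement (Pólya--Szegő on the interval, monotone rearrangement), it is monotone. It solves $u''+u^{p-1}=\lambda u$ with Neumann conditions. By the phase-plane picture, a monotone positive solution is either constant or the half-period of a periodic orbit of the conservative system
\[
\tfrac12 (u')^2+F(u)=\text{const},\qquad F(u)=\tfrac1p u^p-\tfrac\lambda2 u^2,
\]
running from a maximum to an adjacent minimum inside the well around $u=\lambda^{1/(p-2)}$. By scaling $u(x)=\lambda^{1/(p-2)}w(\sqrt\lambda x)$ we normalize to a fixed equation $w''+w^{p-1}=w$. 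Positive monotone Neumann solutions on $[0,1]$ then correspond to orbits of energy level $c$ whose half-period $T_p(c)$ equals $\sqrt\lambda$. Constants are handled separately: they are ground states only for small $\mu$, namely below the bifurcation mass, since beyond it their second variation $-\pi^2+(p-2)\lambda$ becomes negative.

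\textbf{Step 2: reduction to two monotonicities.} Parametrize the nonconstant branch by the amplitude or level $c\in(c_{\min},c_{\max})$, where $c_{\max}$ is the homoclinic level. Then $\lambda(c)=T_p(c)^2$ and $\mu(c)=\lambda^{1/(p-2)-1/2}M_p(c)$, with $M_p(c)=\int_0^{T_p(c)} w_c^2$. Uniqueness follows from two facts.
\begin{itemize}
\item[(a)] $c\mapsto T_p(c)$ is strictly monotone. This is known (Chicone-type criteria for $w^{p-1}$ nonlinearities), so each $\lambda$ above the bifurcation value $\pi^2/(p-2)$ has a unique monotone nonconstant solution.
\item[(b)] $c\mapsto\mu(c)$ is strictly monotone along the branch, and the branch starts exactly at the mass where constants lose minimality.
\end{itemize}
Given (a) and (b), each mass $\mu$ is attained by exactly one nonconstant monotone solution up to reflection, or by the constant when $\mu$ is small. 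Since ground states exist (\cite{D18}) and must lie in this family, they are unique. There is a subtlety: ground states might coexist with the constant near the bifurcation mass. We rule this out by checking, using the same expansions, that the bifurcation is supercritical.

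\textbf{Step 3: the limit $p\to2$.} As $p\downarrow2$, the exponent $1/(p-2)$ blows up. It is cleaner to use the rescaling $w=1+\epsilon v$ style expansions, or better the logarithmic limit: set $q=p-2$ and write $w^{p-1}=w\,e^{q\log w}$. After rescaling amplitude as $w=e^{z/q}$-type, or via the variable $v=(w^{q}-1)/q$, the equation converges as $q\to0$ to a limiting nonlinearity $v''=-g(v)$ with $g$ analytic. Then $\mu$ becomes, after normalization, a smooth functional $\Phi_q(c)$ converging in $C^1$, uniformly on compact level ranges, to a limit $\Phi_0(c)$.

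\textbf{Step 4: the main obstacle.} The hardest step is proving $\Phi_0'(c)\neq0$ on the whole branch, and obtaining uniform control near its two ends, where compactness fails. Near the constant end we will use Lyapunov–Schmidt or Taylor expansion of $T$ and $M$ in amplitude, computing the sign explicitly. Near the homoclinic end we will use the asymptotics $T(c)\sim -\tfrac12\log(c_{\max}-c)$ together with the exponential localization of $w_c$. In the middle we will show the limit derivative is a signed integral, for instance through an identity $\Phi_0'=\int(\dots)\,dt/\sqrt{\cdots}>0$ à la Chicone. Uniform convergence then yields $\Phi_q'\neq0$ for $q<\varepsilon$.
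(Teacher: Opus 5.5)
\noindent Your architecture matches the paper's. It uses monotone profiles by rearrangement, uniqueness at fixed $\lambda$ from monotonicity of the period function (Proposition \ref{prop:main}), and exclusion of the constant above $\mu_*(p)$ via its second variation. The mass monotonicity is split into three regimes: a bifurcation end (Lyapunov--Schmidt, uniform in $q$ because $(s^{1+q}-s)/q$ extends smoothly to $q=0$ near $s=1$), a compact middle range (limit $q\to0^+$ plus a Chicone-type argument, as in the Appendix), and a homoclinic end.

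The genuine gap is at the homoclinic end, which is where most of the work lies. The asymptotics $T(c)\sim-\frac12\log(c_{\max}-c)$ and the exponential localization of $w_c$ hold for each fixed $p$, but they are not uniform as $q=p-2\to0^+$. Since $w-w^{1+q}=w(1-w^q)$, the saddle at $w=0$ behaves linearly only where $w^q$ is small, i.e.\ for $w$ exponentially small in $1/q$. For $e^{-1/q}\ll w\ll1$ the dynamics is instead governed by the logarithmic limit nonlinearity $w\log w$, which is singular at $w=0$. In your variable $v=(w^q-1)/q$ the saddle escapes to $v=-\infty$, so it never lies in a compact range. In rescaled variables the limiting half-period behaves like $\ell(\alpha)\sim2\sqrt{-\log\alpha}$, with Gaussian rather than exponential tails. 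Hence there is a window $e^{-C/q}\lesssim\alpha\le\alpha_0$ covered neither by the fixed-$q$ homoclinic asymptotics (effective only when $-\log\alpha\gg1/q$) nor by $C^1$ convergence on compact level ranges.

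\noindent The paper closes this window with the two-scale variable $z=-q\log\alpha$ and the comparison function $K(z)=\int_0^z(1-e^{-s})^{-1/2}\,ds$. This function interpolates between $2\sqrt z$ (logarithmic regime) and $z$ (exponential regime). Lemmas \ref{lem:Hq}--\ref{lem:I2} show that $qL_{2+q}(\alpha)/K(z)\to1$ and $-\alpha L_{2+q}'(\alpha)/K'(z)\to1$ uniformly, which gives $L_{2+q}'/L_{2+q}\le-C$ on $(0,\overline q]\times(0,\alpha_0]$ (Corollary \ref{cor:L}).

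A bound of this kind suffices because of the identity $(\log M_{2+q})'=\frac{4-q}{q}\frac{L_{2+q}'}{L_{2+q}}+\frac{N_{2+q}'}{N_{2+q}}$, which your plan should make explicit. A multiplicative normalization of $\mu=L^{4/q-1}N$ has no limit. The correct normalization is $q\log\mu$, whose limit derivative is $4\ell'/\ell$, so your condition $\Phi_0'\neq0$ is exactly $\ell'<0$. Because of the $4/q$ prefactor, what you need from the mass integral is only a uniform bound on $|N'/N|$, which itself requires a careful estimate near $\alpha=0$. The decisive input is uniform control of the period function across the crossover, which your proposed tools do not provide.

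Two slips: $\mu(c)=\lambda^{2/(p-2)-1/2}M_p(c)$, and the second variation at the constant in the direction $\cos(\pi x)$ is $\pi^2-(p-2)\lambda$.
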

	Thus, even for energy ground states, the non-uniqueness mechanism near $p=6$ is genuinely perturbative, at least on the interval.
	
	A simple reflection argument transfers Theorems \ref{thm:uniqnod}--\ref{thm:uniq} to the circle, the simplest compact graph admitting a cycle covering. Overall, our results reveal a markedly richer uniqueness picture than in the ground state theory, governed jointly by the exponent, the topology, and the metric of the graph.
	
	\smallskip
	The remainder of the paper is organized as follows. In Section \ref{sec:prel} we derive some preliminary results. Section \ref{sec:exun} deals with existence and non-uniqueness of least energy nodal normalized solutions, providing the proof of Theorems \ref{thm:nodex}--\ref{thm:nonun}. In Section \ref{sec:hyp} we discuss the applicability of Theorem \ref{thm:nonun}, proving in particular Theorem \ref{thm:D}. Finally, Sections \ref{sec:uniqnod}--\ref{sec:uniqp=2} address uniqueness of least energy solutions on the interval: the former focuses on the nodal setting and proves Theorem \ref{thm:uniqnod}, the latter is devoted to energy ground states and to the proof of Theorem \ref{thm:uniq}.
	
	\medskip
	{\bf Notation.} In what follows, whenever possible we use short notations like $\|u\|_p$ to denote $L^p$ norms, avoiding to state explicitly the domain of integration unless necessary.
	
	\section{Preliminaries}
	\label{sec:prel}
	
	This section collects some results on action ground states and nodal action ground states that will be used in the following to prove Theorems \ref{thm:nodex}--\ref{thm:nonun}. As we anticipated in the Introduction, the proof of Theorem \ref{thm:nodex} is done by adapting to compact graphs the strategy originally developed in \cite{DDGS} for bounded domains of $\R^N$ with homogeneous Dirichlet boundary conditions. The approach of \cite{DDGS} is in fact sufficiently general that a substantial part of the argument does not depend on whether the problem is posed on a bounded domain or on a compact metric graph, nor on the specific boundary conditions involved. In order to avoid unnecessary repetitions, here and in the next section we shall thus discuss in detail only those steps that require a genuine adaptation to the graph setting. At all other stages, we will indicate that the argument is unchanged and refer to the corresponding result in \cite{DDGS} for the proof.
	
	As we will repeatedly use it, recall that for every $u\in\NN_{p,\lambda}(\G)$ it holds
		\begin{equation}\label{B-1}
		J_{p,\lambda}(u,\mathcal{G})=\left(\frac{1}{2}-\frac{1}{p}\right)\|u\|_{p}^{p}=\left(\frac{1}{2}-\frac{1}{p}\right)(\|u'\|_{2}^{2}+ \lambda \|u\|_{2}^{2})\,,
	\end{equation}
	and that, given $\lambda\in\R$ and $u\in H^1(\G)$ such that $\|u'\|_2^2+\lambda\|u\|_2^2>0$, the quantity
	\[
	n_{p,\lambda}(u):=\left(\frac{\|u'\|_2^2+\lambda\|u\|_2^2}{\|u\|_p^p}\right)^{\frac1{p-2}}
	\]
	is well-defined and $n_{p,\lambda}(u)u\in\NN_{p,\lambda}(\G)$. 

	 Let
	\[
	\mathfrak{A}_{p,\G}(\lambda):=\left\{u\in \NN_{p,\lambda}(\G)\,:\, J_{p,\lambda}(u,\G)=\JJ_{p,\G}(\lambda)\right\}
	\]
	be the set of all action ground states in $\NN_{p,\lambda}(\G)$, and set
	\[
	Q_{p,\G}^-(\lambda):=\inf_{u\in\mathfrak{A}_{p,\G}(\lambda)}\|u\|_2^2\,,\qquad Q_{p,\G}^+(\lambda):=\sup_{u\in\mathfrak{A}_{p,\G}(\lambda)}\|u\|_2^2\,.
	\]
	The next result on action ground states is the content of \cite[Proposition 2.2]{D25}.
	\begin{proposition}
		\label{prop:AGS}
		Let $\G$ be a compact graph. For every $p>2$ the action ground state level $\JJ_{p,\G}$ defined in \eqref{eq:Jlev} satisfies
		\begin{equation}
		\JJ_{p,\G}(\lambda)\begin{cases}
			=0 & \text{if }\lambda\le0\\
			>0 & \text{if }\lambda>0\,,
		\end{cases}
		\end{equation}
		it is continuous on $\R$, strictly increasing on $\R^+$, and it is attained if and only if $\lambda>0$. Moreover, for every $\lambda>0$ the quantities $Q_{p,\G}^\pm(\lambda)$ are attained, they satisfy
		\[
		\left(\J_{p,\G}\right)'_{\mp}(\lambda)=\frac{Q_{p,\G}^\pm(\lambda)}{2}\,,
		\]
		and there exists an at most countable set $\widetilde Z_{p,\G}\subset\R^+$ such that
		\[
		Q_{p,\G}^-(\lambda)=Q_{p,\G}^+(\lambda)\qquad\forall\lambda\in\R^+\setminus\widetilde Z_{p,\G}\,.
		\]
		In particular, $\JJ_{p,\G}$ is locally Lipschitz on $\R$ and differentiable in $\R^+\setminus\widetilde Z_{p,\G}$. 
	\end{proposition}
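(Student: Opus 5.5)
The plan is to argue directly on the Nehari characterization and then run the standard convexity/envelope argument for the dependence of $\JJ_{p,\G}$ on $\lambda$.

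\emph{Step 1: the sign of $\JJ_{p,\G}$ and when it is attained.} For $\lambda\le0$ we use constants: the function $u\equiv c$ lies in $\NN_{p,\lambda}(\G)$ if and only if $\lambda|\G|c^2=|c|^p|\G|$. This has no nonzero solution when $\lambda\le0$. So instead we take a nonconstant $u$ with $\|u'\|_2^2$ small and the Rayleigh quotient $\|u'\|_2^2/\|u\|_2^2$ close to $0$. For such $u$, $n_{p,\lambda}(u)u\in\NN_{p,\lambda}(\G)$ with $\|u'\|_2^2+\lambda\|u\|_2^2>0$ arbitrarily small, and \eqref{B-1} gives $\JJ_{p,\G}(\lambda)\le0$. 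When $\lambda<0$ we choose $u$ with Rayleigh quotient slightly above $-\lambda$, again making $\|u'\|_2^2+\lambda\|u\|_2^2$ positive but arbitrarily small.

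Since $J_{p,\lambda}\ge0$ on the Nehari set by \eqref{B-1}, we get $\JJ_{p,\G}(\lambda)=0$ for $\lambda\le0$. This level is not attained, because a minimizer would satisfy $\|u\|_p=0$.

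For $\lambda>0$ the graph is compact, so the Sobolev embedding gives $\|u\|_p^p=\|u'\|_2^2+\lambda\|u\|_2^2\ge c\,\|u\|_p^2$. Hence $\|u\|_p$ is bounded below on $\NN_{p,\lambda}(\G)$ and $\JJ_{p,\G}(\lambda)>0$. Attainment follows from the standard argument: minimizing sequences are bounded in $H^1$, and $H^1\hookrightarrow L^p$ compactly. We then project the weak limit onto the Nehari manifold, using weak lower semicontinuity and the fact that $J_{p,\lambda}=\sup_{t>0}J_{p,\lambda}(tu)$.

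\emph{Step 2: monotonicity and regularity in $\lambda$.} Take $u\in\mathfrak A_{p,\G}(\lambda)$ and set $t=n_{p,\mu}(u)$. Then
\[
\JJ_{p,\G}(\mu)\le J_{p,\mu}(tu)=\max_{s>0}J_{p,\mu}(su)\le \max_{s>0}J_{p,\lambda}(su)+\frac{\mu-\lambda}{2}\,t^2\|u\|_2^2 .
\]
Rewriting with $\max_{s>0}J_{p,\lambda}(su)=\JJ_{p,\G}(\lambda)$ yields the one-sided estimate
\[
\JJ_{p,\G}(\mu)\le\JJ_{p,\G}(\lambda)+\frac{\mu-\lambda}{2}\|u\|_2^2+o(\mu-\lambda),
\]
since $t\to1$ as $\mu\to\lambda$.

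Strict monotonicity on $\R^+$ comes from the reverse estimate. Take a ground state $w$ at the level $\mu>\lambda$ and project it onto $\NN_{p,\lambda}(\G)$. Using $J_{p,\mu}(sw)>J_{p,\lambda}(sw)$ gives
\[
\JJ_{p,\G}(\lambda)\le\max_s J_{p,\lambda}(sw)<\max_s J_{p,\mu}(sw)=\JJ_{p,\G}(\mu).
\]
Continuity on $\R$ follows from the same two-sided projections, together with a uniform bound on $\|u\|_2$ for action ground states on compact $\lambda$-intervals. Combined with $\JJ_{p,\G}=0$ on $(-\infty,0]$, this also gives continuity at $0$.

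\emph{Step 3: one-sided derivatives and countable exceptions.} Taking the infimum or supremum over $\mathfrak A_{p,\G}(\lambda)$ in the estimates of Step 2 gives
\[
\limsup_{\mu\downarrow\lambda}\frac{\JJ_{p,\G}(\mu)-\JJ_{p,\G}(\lambda)}{\mu-\lambda}\le \frac{Q^-_{p,\G}(\lambda)}2,
\qquad
\liminf_{\mu\uparrow\lambda}\frac{\JJ_{p,\G}(\lambda)-\JJ_{p,\G}(\mu)}{\lambda-\mu}\ge \frac{Q^+_{p,\G}(\lambda)}2 .
\]

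The matching bounds come from compactness. Take $\mu_n\downarrow\lambda$ and ground states $u_n\in\mathfrak A_{p,\G}(\mu_n)$. These converge in $H^1$, up to subsequences, to some element of $\mathfrak A_{p,\G}(\lambda)$. We then project that limit onto $\NN_{p,\mu_n}(\G)$ and reverse the previous inequality.

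This compactness step, together with the identification of the right derivative with $Q^-_{p,\G}$ (not $Q^+_{p,\G}$), is the main obstacle. Attainment of $Q^\pm_{p,\G}$ follows from the same compactness of $\mathfrak A_{p,\G}(\lambda)$ in $H^1$.

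Once the one-sided derivatives are identified, the rest is bookkeeping. The map $\lambda\mapsto Q^\pm_{p,\G}(\lambda)$ is monotone, since the same comparison inequalities give $Q^+_{p,\G}(\lambda)\le Q^-_{p,\G}(\mu)$ for $\lambda<\mu$. A monotone function has at most countably many jumps, so $Q^-_{p,\G}=Q^+_{p,\G}$ outside a countable set $\widetilde Z_{p,\G}$. Differentiability away from $\widetilde Z_{p,\G}$ and local Lipschitz continuity (from the local boundedness of $Q^\pm_{p,\G}$) then follow.
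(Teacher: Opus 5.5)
The paper does not reprove this proposition; it quotes it from \cite[Proposition 2.2]{D25}. Your Steps 1--2 and the two one-sided bounds at the start of Step 3 are essentially sound.

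\textbf{The gap: monotonicity of $Q^\pm_{p,\G}$.} The genuine gap is the final ``bookkeeping'' step. The comparison inequalities do not give $Q^+_{p,\G}(\lambda)\le Q^-_{p,\G}(\mu)$ for $\lambda<\mu$, and this monotonicity is false in general. For $u\in\mathfrak{A}_{p,\G}(\lambda)$ and $w\in\mathfrak{A}_{p,\G}(\mu)$, your two projections only yield
\[
n_{p,\lambda}(w)^2\|w\|_2^2\le\frac{2\left(\JJ_{p,\G}(\mu)-\JJ_{p,\G}(\lambda)\right)}{\mu-\lambda}\le n_{p,\mu}(u)^2\|u\|_2^2,\qquad n_{p,\lambda}(w)<1<n_{p,\mu}(u).
\]
Because of these correction factors, this says nothing in the direction you need. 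A non-decreasing $Q^\pm_{p,\G}$ would make $\JJ_{p,\G}$ convex on $\R^+$, and that fails in two ways:
\begin{itemize}
\item \emph{For $p>6$, on every compact graph.} Any $u\in\mathfrak{A}_{p,\G}(\lambda)$ satisfies $\lambda\|u\|_2^2\le\frac{2p}{p-2}\JJ_{p,\G}(\lambda)\le C\lambda^{\frac p{p-2}-\frac12}$, by testing with a cut-off soliton on an edge. Hence $Q^+_{p,\G}(\lambda)\le C\lambda^{\frac2{p-2}-\frac12}\to0$ as $\lambda\to+\infty$, which is impossible for a positive non-decreasing function.
\item \emph{At $p=6$.} Property (iii) recalled in the Introduction gives $\lambda_1<\lambda_2$ with $Q^+_{6,\G}(\lambda_1)>Q^-_{6,\G}(\lambda_2)$. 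This is exactly the mechanism behind the non-uniqueness theorems.
\end{itemize}
So your derivation of the countable set $\widetilde Z_{p,\G}$, and of differentiability outside it, does not go through.

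\textbf{The missing idea: the right concavity.} Since $\|n_{p,\lambda}(u)u\|_p^p=\bigl(\frac{\|u'\|_2^2+\lambda\|u\|_2^2}{\|u\|_p^2}\bigr)^{\frac p{p-2}}$, for $\lambda>0$ one has
\[
\JJ_{p,\G}(\lambda)=\left(\frac12-\frac1p\right)S(\lambda)^{\frac p{p-2}},\qquad S(\lambda):=\inf_{u\in H^1(\G)\setminus\{0\}}\frac{\|u'\|_2^2+\lambda\|u\|_2^2}{\|u\|_p^2}.
\]
Here $S$ is an infimum of affine functions of $\lambda$, hence concave. It is therefore locally Lipschitz, has one-sided derivatives everywhere, and is differentiable outside an at most countable set. $\JJ_{p,\G}$ inherits all of this.

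Your envelope argument then identifies $S'_\pm(\lambda)=Q^\mp_{p,\G}(\lambda)/\|u\|_p^2$, where $\|u\|_p^2=\bigl(\frac{2p}{p-2}\JJ_{p,\G}(\lambda)\bigr)^{2/p}$ is the common value on $\mathfrak{A}_{p,\G}(\lambda)$. So the quantity that is monotone (non-increasing) is $Q^\pm_{p,\G}(\lambda)\,\JJ_{p,\G}(\lambda)^{-2/p}$, not $Q^\pm_{p,\G}(\lambda)$. Moreover, $Q^-_{p,\G}(\lambda)<Q^+_{p,\G}(\lambda)$ exactly at the corners of $S$. No such structure exists for the nodal level, which is consistent with Proposition \ref{prop:NAGS} yielding only a.e. differentiability.

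\textbf{Two smaller corrections.}
\begin{itemize}
\item \emph{Direction of the projection in Step 3.} The matching bounds do not come from projecting the limit onto $\NN_{p,\mu_n}(\G)$; that only reproduces the Step 2 upper bound. Instead, project $u_n\in\mathfrak{A}_{p,\G}(\mu_n)$ onto $\NN_{p,\lambda}(\G)$, as in your strict-monotonicity argument. This gives $\JJ_{p,\G}(\lambda)\le\JJ_{p,\G}(\mu_n)-\frac{\mu_n-\lambda}{2}n_{p,\lambda}(u_n)^2\|u_n\|_2^2$. Then use $n_{p,\lambda}(u_n)\to1$ and $\|u_n\|_2^2\to\|u\|_2^2\in[Q^-_{p,\G}(\lambda),Q^+_{p,\G}(\lambda)]$ along subsequences.
\item \emph{The point $\lambda=0$.} There is no ground state at $\lambda=0$ to project. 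Right-continuity and the Lipschitz bound there should instead come from constants, e.g. $\JJ_{p,\G}(\lambda)\le\left(\frac12-\frac1p\right)|\G|\lambda^{\frac p{p-2}}$.
\end{itemize}
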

	\begin{remark}
		\label{rem:extAGS}
		The threshold $\lambda=0$ separating the different regimes in Proposition \ref{prop:AGS} is the first eigenvalue of the Laplace operator $-d^2/dx^2$ on $\G$ endowed with homogeneous Kirchhoff conditions. Actually, a direct inspection of the proof of \cite[Proposition 2.2]{D25} immediately shows that Proposition \ref{prop:AGS} above remains true even with more general vertex conditions, such as mixed homogeneous Kirchhoff and Dirichlet conditions, simply replacing the value $\lambda=0$ with $\lambda=-\nu_1(\G)$, where $\nu_1(\G)$ is the bottom of the spectrum of the associated Laplace operator. 
	\end{remark}
	
	Denote now by
	\[
	\mathfrak{A}_{p,\G}^{nod}(\lambda):=\left\{u\in \NN_{p,\lambda}^{nod}(\G)\,:\, J_{p,\lambda}(u,\G)=\JJ_{p,\G}^{nod}(\lambda)\right\}
	\]
	the set of nodal action ground states in $\NN_{p,\lambda}^{nod}(\G)$, and 
	\[
	Q_{p,\G}^{nod,-}(\lambda):=\inf_{u\in\mathfrak{A}_{p,\G}^{nod}(\lambda)}\|u\|_2^2\,,\qquad Q_{p,\G}^{nod,+}(\lambda):=\sup_{u\in\mathfrak{A}_{p,\G}^{nod}(\lambda)}\|u\|_2^2\,.
	\]
	To prove Theorem \ref{thm:nodex} we need the following nodal counterpart of Proposition \ref{prop:AGS}.
	
	\begin{proposition}
		\label{prop:NAGS}
		Let $\G$ be a compact graph and $\lambda_2(\G)$ be the second eigenvalue of $-d^2/dx^2$ on $\G$ endowed with homogeneous Kirchhoff conditions at the vertices. For every $p>2$ the nodal action ground state level $\JJ_{p,\G}^{nod}$ defined in \eqref{eq:Jnodlev} satisfies
		\begin{equation}
			\label{eq:Jnod_segno}
		\JJ_{p,\G}^{nod}(\lambda)\begin{cases}
			=0 & \text{if }\lambda\le-\lambda_2(\G)\\
			>0 & \text{if }\lambda>-\lambda_2(\G)\,,
		\end{cases}
		\end{equation}
		it is continuous on $\R$, strictly increasing on $[-\lambda_2(\G),+\infty)$, and it is attained if and only if $\lambda>-\lambda_2(\G)$. Moreover, for every $\lambda>-\lambda_2(\G)$ the quantities $Q_{p,\G}^{nod,\pm}(\lambda)$ are attained and satisfy
		\begin{equation}
			\label{eq:derJnod}
			\limsup_{\varepsilon\to0^+}\frac{\JJ_{p,\G}^{nod}(\lambda+\varepsilon)-\JJ_{p,\G}^{nod}(\lambda)}\varepsilon\leq\frac{Q_{p,\G}^{nod,-}(\lambda)}2\leq\frac{Q_{p,\G}^{nod,+}(\lambda)}2\leq\liminf_{\varepsilon\to0^-}\frac{\JJ_{p,\G}^{nod}(\lambda+\varepsilon)-\JJ_{p,\G}^{nod}(\lambda)}\varepsilon\,.
		\end{equation}
		Furthermore, $\JJ_{p,\G}^{nod}$ is locally Lipschitz on $\R$. In particular, it is differentiable almost everywhere and there exists a negligible set $\widetilde{Z}_{p,\G}^{nod}\subset[-\lambda_2(\G),+\infty)$ such that
		\[
		(\JJ_{p,\G}^{nod})'(\lambda)=\frac{Q_{p,\G}^{nod,-}(\lambda)}2=\frac{Q_{p,\G}^{nod,+}(\lambda)}2\qquad\forall\lambda\in [-\lambda_2(\G),+\infty)\setminus\widetilde{Z}_{p,\G}^{nod}.
		\]
	\end{proposition}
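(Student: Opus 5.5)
The plan is to follow the scheme of \cite[Proposition 2.2]{D25} (itself adapted from \cite{DDGS}), replacing the first Kirchhoff eigenvalue $0$ with $\lambda_2(\G)$. The key structural fact is that for $u\in\NN_{p,\lambda}^{nod}(\G)$ the positive and negative parts are supported on disjoint sets. Hence $J_{p,\lambda}(u)=J_{p,\lambda}(u^+)+J_{p,\lambda}(u^-)$, and by \eqref{B-1} this equals $\left(\frac12-\frac1p\right)(\|u^+\|_p^p+\|u^-\|_p^p)$.

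\emph{Sign of $\JJ^{nod}_{p,\G}$.} For $\lambda\le-\lambda_2(\G)$, take an eigenfunction $\varphi_2$ of the second eigenvalue. It changes sign and has $\|\varphi_2'\|_2^2=\lambda_2\|\varphi_2\|_2^2$. Perturb it to $\varphi_2+\delta\psi$ with $\psi$ chosen so that both parts satisfy $\|(\cdot)'\|_2^2+\lambda\|\cdot\|_2^2>0$ while this quantity is small; alternatively, when $\lambda<-\lambda_2$, use Nehari projections of functions with quotient close to $-\lambda$ built from a two-dimensional test space. Projecting $u^\pm$ separately via $n_{p,\lambda}$ then gives nodal Nehari elements with arbitrarily small action, so the infimum is $0$. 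It cannot be attained, since attainment would force $u^\pm=0$.

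For $\lambda>-\lambda_2(\G)$ we need a lower bound. The key step is to show that for any sign-changing $u$, at least one of $u^\pm$ has Rayleigh quotient $\ge\lambda_2(\G)$ up to an error that is controlled. Here I would use that the span of $u^+,u^-$ is two-dimensional, so by min-max $\max$ of the Rayleigh quotient on it is $\ge\lambda_2$; because the supports are disjoint, this max equals $\max(R(u^+),R(u^-))$. This yields $\|v'\|_2^2+\lambda\|v\|_2^2\ge c\|v\|_{H^1}^2$ for one part $v$. Combined with the Nehari identity and Gagliardo--Nirenberg, this gives $\|v\|_p\ge c>0$, hence $\JJ^{nod}>0$. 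The main obstacle is existence of minimizers for $\lambda>-\lambda_2$: the other part may have negative quotient. The Nehari identity still bounds it, because $\|w\|_p^p=\|w'\|^2+\lambda\|w\|^2\le\|w'\|_2^2+|\lambda|\|w\|_2^2$, and $\|w\|_p^p$ is bounded by $J$. So a minimizing sequence is bounded in $H^1$ after bounding $\|w\|_2$. For that I would use Hölder on a compact graph, $\|w\|_2\le C\|w\|_p$, which gives $H^1$ bounds.

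Compactness of $H^1\hookrightarrow L^q$ on a compact graph then gives weak limits $u^\pm$ with disjoint supports. Both are nonzero by the lower bound: each part has $\|w\|_p$ bounded below, since $\|w\|_p^p\ge\|w'\|^2+\lambda\|w\|^2$ is impossible to make vanish unless $\|w\|_p^p\le C\|w\|_p^{2}$ fails, using $\lambda$ possibly negative together with the $\lambda_2$ argument. Lower semicontinuity gives $\|w'\|^2+\lambda\|w\|^2\le\|w\|_p^p$, so projecting by $n\le1$ gives a minimizer. Because the minimization is over two separate Nehari constraints, the standard deformation argument shows minimizers are critical points.

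\emph{Monotonicity, continuity, derivative bounds.} For $\lambda<\mu$ and a minimizer $u$ at $\mu$, project $u^\pm$ to $\NN_{p,\lambda}$. Writing $g(t)=J_{p,t}$ of the projection, the envelope identity gives $\frac{d}{dt}\max_s J_{p,t}(su^\pm)=\frac12\|n u^\pm\|_2^2$. This yields $\JJ^{nod}(\lambda+\varepsilon)\le\JJ^{nod}(\lambda)+\frac\varepsilon2\|u\|_2^2+o(\varepsilon)$ for each $u\in\mathfrak A^{nod}(\lambda)$ and for $\varepsilon$ of both signs. Taking $u$ realizing $Q^{nod,\mp}$, which is attained by compactness of $\mathfrak A^{nod}(\lambda)$ as above, produces \eqref{eq:derJnod}. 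The same one-sided bounds, with $H^1$ bounds uniform on compact $\lambda$-sets, give the local Lipschitz property and strict increase, since $Q^{nod,\pm}>0$. Rademacher then yields a.e. differentiability, and at points of differentiability \eqref{eq:derJnod} forces $Q^{nod,-}=Q^{nod,+}$.
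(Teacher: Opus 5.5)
Your Courant--Fischer argument on $\mathrm{span}\{u^+,u^-\}$ for $\JJ_{p,\G}^{nod}(\lambda)>0$ when $\lambda>-\lambda_2(\G)$ is correct, and simpler than the zero-average estimate the paper imports from \cite{DDGS}. Once nodal ground states exist, projecting the two parts of a fixed $u\in\mathfrak{A}_{p,\G}^{nod}(\lambda)$ onto $\NN_{p,\lambda+\varepsilon}(\G)$ also gives \eqref{eq:derJnod}. The gap is the claim, used for existence, that \emph{each} part of an element of $\NN_{p,\lambda}^{nod}(\G)$ has $L^p$ norm bounded below. The min-max argument controls only one of $u^\pm$. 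For the other you only know that it lies in $\NN_{p,\lambda}(\G)$, and $\JJ_{p,\G}(\lambda)=0$ for $\lambda\le0$ by Proposition \ref{prop:AGS}.

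For negative $\lambda$ the claim is in fact false. On $[0,1]$, where $\lambda_2=\pi^2$, fix $\delta>0$ small and take $\lambda=-\pi^2/2$. Let $u^-\le0$ be any element of $\NN_{p,\lambda}$ supported in $[0,\delta]$. Let $u^+=n_{p,\lambda}(w)w$, where $w\ge0$ is supported in $[\delta,1]$, $\|w\|_2=1$ and $\|w'\|_2^2=\pi^2/2+\epsilon$. Such a $w$ exists because the first Dirichlet--Neumann eigenvalue of $[\delta,1]$ is $\pi^2/(4(1-\delta)^2)<\pi^2/2$. Then $u^++u^-\in\NN_{p,\lambda}^{nod}$, while $n_{p,\lambda}(w)^{p-2}=\epsilon/\|w\|_p^p\le\epsilon$ and $\|w\|_{H^1}$ stays bounded, so $\|u^+\|_p\to0$ as $\epsilon\to0$.

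So nothing in your argument prevents one part of a minimizing sequence from vanishing in the limit. Your sentence about ``$\|w\|_p^p\le C\|w\|_p^2$ failing'' does not supply the missing inequality. Moreover, for $\lambda<0$ lower semicontinuity does not guarantee $\|(\bar u^\pm)'\|_2^2+\lambda\|\bar u^\pm\|_2^2>0$, which you need in order to apply $n_{p,\lambda}$ to the limit.

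The same missing ingredient reappears in two other places. First, in the compactness of $\mathfrak{A}_{p,\G}^{nod}(\lambda)$ that you invoke ``as above'' to attain $Q_{p,\G}^{nod,\pm}$. Second, in your local Lipschitz and monotonicity step: projecting a ground state at $\lambda$ down to $\lambda-\varepsilon$ requires $\varepsilon\|u^\pm\|_2^2<\|u^\pm\|_p^p$. Uniformity on compact $\lambda$-sets therefore needs a uniform lower bound on $\|u^\pm\|_p$ over ground states, not just $H^1$ bounds.

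The paper gets this from the equation rather than from the Nehari constraint. Nodal action ground states solve \eqref{eq:nlsnomass}, so their limits do as well. A one-signed limit would be strictly one-signed, by the Kirchhoff conditions and uniqueness for the Cauchy problem. That forces $\lambda>0$, and then $\JJ_{p,\G}(\lambda)>0$ bounds the vanishing part from below, a contradiction (Step 2). This yields convergence of ground states and attainment of $Q_{p,\G}^{nod,\pm}$. It also gives, in Step 3, the uniform bound $\|u^\pm\|_p\ge\beta$ for $\lambda$ away from $-\lambda_2(\G)$, on which the \cite{DDGS} monotonicity and Lipschitz argument relies. Existence itself the paper takes from \cite{BW}. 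Your direct minimization needs an ingredient of this type, one that uses minimality or the equation and not only the Nehari identity, to rule out the loss of one nodal component.
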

	\begin{proof}
		We split the proof in various steps.
		
		\smallskip{\em Step 1: proof of \eqref{eq:Jnod_segno} and existence of nodal action ground states.} We first recall that the following estimates hold
		\begin{equation}
			\label{eq:lev_l_up_down}
			\JJ_{p,\G}^{nod}(\lambda)\leq C_1(\lambda+\lambda_2(\G))^{\frac{p}{p-2}}\,,\qquad 
				\mathcal{J}_{p,\mathcal{G}}^{nod}(\lambda)\geq C_2 \min \left(1, \frac{\lambda+\lambda_2(\mathcal{G})}{\lambda_2(\mathcal{G})} \right)^{\frac{p}{p-2}}
		\end{equation}
		for every $\lambda\ge-\lambda_2(\G)$, with suitable constants $C_1,C_2>0$ depending only on $p$ and $\G$. The proof of these estimates is identical to that of \cite[Proposition 2.5]{DDGS}. The unique minor difference arises when deriving the intermediate inequality
		\begin{equation}
		\label{eq:intest}
		\|v'\|_{2} \geq C\min\left(1, \frac{\lambda + \lambda_2(\mathcal{G})}{\lambda_2(\mathcal{G})}\right)^{\frac{1}{p-2}}\qquad\forall\, v\in\NN_{p,\lambda}^{nod}(\G)\text{ such that }\int_\G v\,dx=0\,,
		\end{equation}
		with a suitable constant $C>0$ depending only on $p$ and $\G$, which is needed in the proof of the second estimate in \eqref{eq:lev_l_up_down}. To prove \eqref{eq:intest}, note first that since $v$ has zero average on $\G$, it is orthogonal to constant functions, that are the eigenfunctions associated to the first eigenvalue of $-d^2/dx^2$ with homogeneous Kirchhoff conditions on $\G$. Hence, by definition of $\lambda_2(\G)$, 
		 \begin{equation}\label{GL}
		 	\| v' \|_{2}^2 + \lambda \|v\|_{2}^2 \geq \min\left(1, \frac{\lambda + \lambda_2(\mathcal{G})}{\lambda_2(\mathcal{G})} \right) \|v'\|_{2}^2\,.
		 \end{equation}
		 Moreover, by the standard $L^\infty$ Gagliardo-Nirenberg inequality
		 \[
		 \|u\|_\infty\leq C_\infty\|u\|_2^{1/2}\|u\|_{H^1}^{1/2}\qquad\forall u\in H^1(\G)
		 \]
		 with $C_\infty>0$ depending only on $\G$, we obtain
		 \begin{equation}\label{New-2}
		 		\| v\|_{\infty} \leq C_{\infty}  \frac{\|v'\|_{2}^{1/2}}{\lambda_2(\mathcal{G})^{1/4}}\left( 1+\frac{1}{\lambda_2(\mathcal{G})}\right)^{1/4} \|v'\|_{2}^{1/2} \leq  C_{\infty}  \left(1+\frac{1}{\lambda_2(\mathcal{G})} \right)^{1/2} \|v'\|_{2}\,,
		 \end{equation}
		that together with \eqref{GL}, the fact that $v\in\NN_{p,\lambda}^{nod}(\G)$ and that $\G$ is compact, gives
		 \begin{equation*}
		 		\min\left(1, \frac{\lambda + \lambda_2(\mathcal{G})}{\lambda_2(\mathcal{G})} \right) \|v'\|_{2}^2\leq\| v' \|_{2}^2 + \lambda \|v\|_{2}^2=\|v\|_{p}^{p}\leq |\mathcal{G}| \| v\|_{\infty}^{p} \leq   |\mathcal{G}|  C_{\infty}^p \left(1+\frac{1}{\lambda_2(\mathcal{G})}\right)^{\frac{p}{2}} \|v'\|_{2}^p\,,
		 \end{equation*}
		 yielding \eqref{eq:intest}, and thus allowing to recover \eqref{eq:lev_l_up_down} as in \cite[Proposition 2.5]{DDGS}. 
		 
		 By \eqref{eq:lev_l_up_down}, it directly follows that $\JJ_{p,\G}^{nod}(\lambda)>0$ if $\lambda>-\lambda_2(\G)$. Moreover, that $\JJ_{p,\G}^{nod}(\lambda)$ is attained for every $\lambda\in(-\lambda_2(\G),+\infty)$ can be proved with the same argument developed in \cite{BW} for bounded domains (no difference arises on compact graphs, as already pointed out in \cite[Proposition 2.3]{DDGST}). 
		 
		 We then show that $\JJ_{p,\G}^{nod}(\lambda)=0$ whenever $\lambda\leq-\lambda_2(\G)$. To this end, let $\varphi_2\in H^1(\G)$ be an eigenfunction associated to $\lambda_2(\G)$, and set
		 \[
		 \G_+:=\left\{x\in\G\,:\,\varphi_2(x)>0\right\},\qquad \G_-:=\left\{x\in\G\,:\,\varphi_2(x)<0\right\},\qquad\G_0=\G\setminus\left(\G_+\cup\G_-\right).
		 \]
		 Denote by $\overline{\G_\pm}$ the closure of $\G_\pm$ in $\G$. Since $\varphi_2$ is a non-negative eigenfunction of $-d^2/dx^2$ on $\overline{\G_+}$ endowed with Kirchhoff conditions at the vertices of $\overline{\G_+}\cap\G_+$ and homogeneous Dirichlet conditions at $\overline{\G_+}\setminus\G_+$, then it is an eigenfunction associated to the first eigenvalue $\lambda_{1,+}$ of this operator on $\overline{\G_+}$. Arguing analogously on $\G_-$ we obtain
		 \[
		 \lambda_2(\G)=\lambda_{1,+}=\lambda_{1,-}\,.
		 \]
		By Remark \ref{rem:extAGS}, the action ground state level on $\overline{\G_\pm}$ with corresponding mixed Kirchhoff-Dirichlet conditions is equal to zero for every $\lambda\leq-\lambda_{1,\pm}$. Hence, for every $\lambda\leq-\lambda_2(\G)$ there exist two sequences of functions $u_n^\pm\in \NN_{p,\lambda}(\overline{\G_\pm})$ such that
		\[
		u_n^+\geq0\text{ on }\overline{\G_+}\,,\qquad u_n^+\equiv0 \text{ on }\overline{\G_+}\setminus\G_+\,,\qquad u_n^-\leq0\text{ on }\overline{\G_-}\,,\qquad u_n^-\equiv0 \text{ on }\overline{\G_-}\setminus\G_-\,,
		\] 
		and $\displaystyle  \JJ_{p,\G_\pm}(u_n)\to0$ as $n\to\infty$. 
		Thus, defining
		\[
		v_n(x):=\begin{cases}
			u_n^+(x) & \text{if }x\in\G_+\\
			u_n^-(x) & \text{if }x\in\G_-\\
			0 & \text{if }x\in\G_0\,,
		\end{cases}
		\]
		we have $v_n\in\NN_{p,\lambda}^{nod}(\G)$ for every $n$ (by construction $v_n\in H^1(\G)$ because $u_n^\pm\equiv0$ on the boundary of $\overline{\G_\pm}$) and $J_{p,\lambda}(v_n,\G)\to0$ as $n\to\infty$. This proved that $\JJ_{p,\G}^{nod}(\lambda)=0$ for every $\lambda\leq-\lambda_2(\G)$, which also ensures that it is not attained by \eqref{B-1}.
		
		\smallskip
		{\em Step 2: continuity of $\JJ_{p,\G}^{nod}$.} By \eqref{eq:Jnod_segno} and \eqref{eq:lev_l_up_down}, to show that $\JJ_{p,\G}^{nod}$ is continuous on $\R$ we are left to prove the continuity on $(-\lambda_2(\G),+\infty)$. Let then $\lambda>-\lambda_2(\G)$ be fixed and $\lambda_n\to\lambda$ as $n\to\infty$, so that $\lambda_{n} \geq \alpha > -\lambda_{2}(\mathcal{G})$ for some fixed $\alpha$ and for sufficiently large $n$. By Step 1, it thus exists $u_n \in \mathcal{N}_{p,\lambda_n}^{nod}(\mathcal{G})$ such that $J_{p,\lambda_n}(u_n, \mathcal{G})=\mathcal{J}_{p,\mathcal{G}}^{nod}(\lambda_n)$. The  boundedness of $\lambda_n$ yields that of $\mathcal{J}_{p,\mathcal{G}}^{nod}(\lambda_n)$ by \eqref{eq:lev_l_up_down}, which together with the compactness of $\G$ gives that of $(u_n)_n$ in $H^1(\G)$.  Hence, there exists $u\in H^1(\G)$ such that, up to subsequences, $u_n\rightharpoonup u$ in $H^1(\G)$ and $u_n\to u$ in $L^q(\G)$ for every $q\geq2$ as $n\to\infty$. As a consequence, by \eqref{B-1} and \eqref{eq:lev_l_up_down}
		\[
		\left(\frac{1}{2}-\frac{1}{p}\right)\|u\|_{p}^p = \lim_{n\to +\infty} \left(\frac{1}{2}-\frac{1}{p}\right)\|u_n\|_{p}^p \geq  \liminf_{n\to\infty} \mathcal{J}_{p, \mathcal{G}}^{nod}(\lambda_n) \geq C_2 \min\left(1, \frac{\lambda + \lambda_2}{\lambda_2}\right)^{\frac{p}{p-2}} > 0\,,
		\]
		so that $u\not\equiv0$ on $\G$. Moreover, $u\in\NN_{p,\lambda}^{nod}(\G)$. Indeed, since $(u_n,\lambda_n)$ satisfy \eqref{eq:nlsnomass} and $\lambda_n\to\lambda$, passing to the limit in the equation entails
		\begin{equation}\label{add-New-2}
			\begin{cases}
				u'' + |u|^{p-2} u = \lambda u, &  \text{on each edge} \ e \in \mathbb{E}_\G, \\
				u\text{ is continuous} & \text{on }\G\\
				\sum_{e\succ \vv} u_e'(\vv)=0 & \text{at every vertex } \vv\in \mathbb{V}_\G\,. 
			\end{cases}
		\end{equation}
		Now, if $u\geq0$ on $\G$, then \eqref{add-New-2} implies $u>0$ on $\G$. Indeed, Kirchhoff conditions ensure that $u'$ would be zero at any global minimum point of $u$ on $\G$, and if $u$ and $u'$ vanished at the same point, the well-posedness for the Cauchy problem associated to \eqref{add-New-2} would yield $u\equiv0$ on $\G$, which is impossible here. However, if $u>0$ everywhere on $\G$, then evaluating the first line of \eqref{add-New-2} at a local minimum point of $u$ gives $\lambda>0$. The fact that $u_n\in\NN_{p,\lambda_n}^{nod}(\G)$, the convergence of $u_n$ to $u$ and Proposition \ref{prop:AGS} then lead to
		\begin{equation*}
			\left(\frac{1}{2}-\frac{1}{p}\right) \|u^-\|_{p}^{p}=\lim_{n\to \infty}\left(\frac{1}{2}-\frac{1}{p}\right) \|u_{n}^{-}\|_{p}^{p}=\lim_{n\to \infty}  J_{p,\lambda_n}(u_{n}^{-},\mathcal{G}) \geq \liminf_{n\to\infty} \mathcal{J}_{p,\mathcal{G}}(\lambda_n)=\mathcal{J}_{p,\mathcal{G}}(\lambda)>0,
		\end{equation*}
		which is a contradiction with the assumption that $u\geq 0$.
		Hence, $u\in \mathcal{N}_{p,\lambda}^{nod}(\mathcal{G})$ and
		\begin{equation}\label{add-new-3}
			\lim_{n\to\infty} \mathcal{J}_{p,\mathcal{G}}^{nod}(\lambda_n)
			=\lim_{n\to\infty} \left(\frac{1}{2}-\frac{1}{p}\right)\|u_n\|_{p}^{p}
			=\left(\frac{1}{2}-\frac{1}{p}\right)\|u\|_{p}^{p} \geq \mathcal{J}_{p,\mathcal{G}}^{nod}(\lambda).
		\end{equation}
		Conversely, taking $\tilde{u} \in \mathcal{N}_{p,\lambda}^{nod}(\mathcal{G})$ such that $J_{p,\lambda}(\tilde{u}, \mathcal{G}) =\mathcal{J}_{p,\mathcal{G}}^{nod}(\lambda)$, we have 
		\begin{equation*}
			\begin{aligned}
				n_{\lambda_n,p}^{p-2}(\widetilde{u}^{\pm})= \frac{\|(\widetilde{u}^{\pm})'\|_{2}^{2}+\lambda_n \|\widetilde{u}^{\pm}\|_{2}^{2}}{\|\widetilde{u}^{\pm}\|_{p}^{p}} &=\frac{\|(\widetilde{u}^{\pm})'\|_{2}^{2}+\lambda \|\widetilde{u}^{\pm}\|_{2}^{2}}{\|\widetilde{u}^{\pm}\|_{p}^{p}} + 
				\frac{(\lambda_n-\lambda)\|\widetilde{u}^{\pm}\|_{2}^{2}}{\|\widetilde{u}^{\pm}\|_{p}^{p}}\\
				&=1 + (\lambda_n-\lambda)\frac{\|\widetilde{u}^{\pm}\|_{2}^{2}}{\|\widetilde{u}^{\pm}\|_{p}^{p}} \to 1\ \ \text{as} \ n \to+\infty\,,
			\end{aligned}
		\end{equation*}
		so that
		\begin{equation*}
			\begin{aligned}
				\mathcal{J}_{p,\mathcal{G}}^{nod}(\lambda_n) &\leq J_{p,\lambda_n}(n_{p,\lambda_n}(\widetilde{u}^+)\widetilde{u}^+ + n_{p,\lambda_n}(\widetilde{u}^-)\widetilde{u}^-) \\
				& =\left(\frac{1}{2} -\frac{1}{p} \right) n_{p,\lambda_n}^p(\widetilde{u}^+) \|\widetilde{u}^+\|_{p}^{p}+ \left(\frac{1}{2} -\frac{1}{p} \right) n_{p,\lambda_n}^p(\widetilde{u}^-) \|\widetilde{u}^-\|_{p}^{p} \\
				&=\left(\frac{1}{2} -\frac{1}{p} \right) \|\widetilde{u}\|_{p}^{p}+o_{n}(1)= \mathcal{J}_{p,\mathcal{G}}^{nod}(\lambda)+o_{n}(1)\,.
			\end{aligned}
		\end{equation*}
		Therefore, 
		\begin{equation*}
			\lim_{n \to\infty}  \mathcal{J}_{p,\mathcal{G}}^{nod}(\lambda_n)\leq \mathcal{J}_{p,\mathcal{G}}^{nod}(\lambda),
		\end{equation*}
		which coupled with \eqref{add-new-3} gives the desired continuity of $\JJ_{p,\G}^{nod}$ at $\lambda$. Observe also that, up to subsequences, this argument proves that nodal action ground states in $\NN_{p,\lambda_n}^{nod}(\G)$ converge strongly in $L^q(\G)$, for every $q\geq2$, to nodal action ground states in $\NN_{p,\lambda}^{nod}(\G)$ as $\lambda_n\to\lambda$, for every $\lambda>-\lambda_2(\G)$. 
		
		\smallskip
		{\em Step 3: conclusion of the proof.} The strong convergence in $L^q(\G)$ of nodal action ground states obtained at the end of Step 2 ensures that $Q_{p,\G}^{nod,\pm}(\lambda)$ are attained for every $\lambda>-\lambda_2(\G)$. 
		
		Such a convergence also implies that the $L^p$ norm of both the positive and negative parts of nodal action ground states in $\NN_{p,\lambda}^{nod}(\G)$ is uniformly bounded away from zero whenever $\lambda$ is uniformly bounded away from $-\lambda_2(\G)$. Indeed, if this were not the case, there would exist $(\lambda_n)_n$ bounded away from $-\lambda_2(\G)$ and nodal action ground states $u_n\in\NN_{p,\lambda_n}^{nod}(\G)$ such that $\|u_n^+\|_p\to0$ as $n\to\infty$. Since $u_n^+\in\NN_{p,\lambda_n}(\G)$, this would imply $\JJ_{p,\G}(\lambda_n)\to0$ too, and thus $\displaystyle\limsup_{n\to\infty}\lambda_n\leq0$ by Proposition \ref{prop:AGS}. Hence, up to subsequences $\lambda_n\to\lambda>-\lambda_2(\G)$ and, by the already established convergence of nodal action ground states, $u_n\to u$ in $L^q(\G)$ for every $q\geq2$, for some nodal action ground state $u\in\NN_{p,\lambda}^{nod}(\G)$. The strong convergence of $u_n^+$ to $u$ would then yield $u^+\equiv0$, contradicting $u\in\NN_{p,\lambda}^{nod}(\G)$. 
		
		The previous property is needed to repeat verbatim the argument in the proof of \cite[Proposition 3.2]{DDGS}, which finally proves the monotonicity of $\JJ_{p,\G}^{nod}$, the estimates in \eqref{eq:derJnod}, and the fact that $\JJ_{p,\G}^{nod}$ is locally Lipschitz on $\R$, and thus differentiable almost everywhere. 
		 \end{proof}
		 
		 \begin{remark}
		 	\label{rem:2nod}
		 	Every nodal action ground state in $\NN_{p,\lambda}^{nod}(\G)$ has exactly two nodal regions. This is given by the argument in \cite{BW}, but it can also be seen directly as follows. Assume by contradiction that there exists a nodal action ground state $u\in\NN_{p,\lambda}^{nod}(\G)$ with more than two nodal regions. Let $\G_1$ be a connected component of $\text{supp}(u^+)$ and $\G_2$ be a connected component of $\text{supp}(u^-)$. Let then $u_1:=u\chi_{\G_1}$, $u_2:=u\chi_{\G_2}$, where $\chi_{\G_1},\,\chi_{\G_2}$ are the characteristic functions of $\G_1,\,\G_2$, respectively. Since $u$ is a nodal action ground state, $J_{p,\lambda}'(u,\G)u_1=J_{p,\lambda}'(u_1,\G)u_1=0$ and $J_{p,\lambda}'(u,\G)u_2=J_{p,\lambda}'(u_2,\G)u_2=0$. Hence, $u_1,u_2\in\NN_{p,\lambda}(\G)$, so that $u_1+u_2\in\NN_{p,\lambda}^{nod}(\G)$. Moreover,  since by assumption $w:=u-u_1-u_2\not\equiv0$ on $\G$, one obtains
		 	\[
		 	\JJ_{p,\G}^{nod}(\lambda)=J_{p,\lambda}(u,\G)=J_{p,\lambda}(w,\G)+J_{p,\lambda}(u_1+u_2,\G)>J_{p,\lambda}(u_1+u_2,\G)\geq\JJ_{p,\G}^{nod}(\lambda)\,,
		 	\]
		 	which is the contradiction we seek.
		 \end{remark}
		 
		 We conclude this section with the next asymptotic results.
		 \begin{proposition}
		 	\label{prop:as}
		For every $p\in(2,6)$, it holds
		\begin{equation}
			\label{eq:asJ/l}
			\lim_{\lambda\to+\infty}\frac{\JJ_{p,\G}^{nod}(\lambda)}\lambda=+\infty\,.
		\end{equation}
		Moreover, 	let $\mu_\G^{nod}$ be the number defined in \eqref{eq:muGnod}. There exists $\gamma>0$ such that
		\begin{equation}
			\label{eq:asJ6}
			\JJ_{6,\G}^{nod}(\lambda)=\frac{\lambda\mu_\G^{nod}}2+o\left(e^{-\gamma\sqrt\lambda}\right)\qquad\text{as }\lambda\to+\infty\,.
		\end{equation}
		Furthermore, for every $\varepsilon>0$ there exists $\widetilde\lambda$ (depending on $\G$ and $\varepsilon$) such that
		\begin{equation}
			\label{eq:asM6}
			\mu_\G^{nod}-\varepsilon\leq Q_{6,\G}^{nod,-}(\lambda)\leq Q_{6,\G}^{nod,+}(\lambda)\leq \mu_\G^{nod}+\varepsilon\qquad\forall \lambda\geq\widetilde\lambda\,.
		\end{equation}
		\end{proposition}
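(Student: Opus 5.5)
We prove the three claims of Proposition \ref{prop:as} through a blow-up analysis of nodal action ground states as $\lambda\to+\infty$. The basic tool is the scaling $u(x)=\lambda^{\frac1{p-2}}w(\sqrt\lambda\,x)$. It maps $\NN_{p,\lambda}(\G)$ onto $\NN_{p,1}(\sqrt\lambda\,\G)$, where $\sqrt\lambda\,\G$ is the graph with all edge lengths multiplied by $\sqrt\lambda$. Under this map
\[
J_{p,\lambda}(u,\G)=\lambda^{\frac{p}{p-2}-\frac12}J_{p,1}(w,\sqrt\lambda\,\G),\qquad \|u\|_2^2=\lambda^{\frac2{p-2}-\frac12}\|w\|_2^2 .
\]
Hence $\JJ^{nod}_{p,\G}(\lambda)=\lambda^{\frac{p+2}{2(p-2)}}\JJ^{nod}_{p,\sqrt\lambda\G}(1)$. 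For $p<6$ the exponent $\frac{p+2}{2(p-2)}$ is strictly larger than $1$. So \eqref{eq:asJ/l} follows once we know that $\JJ^{nod}_{p,\sqrt\lambda\G}(1)$ is bounded below by a positive constant uniformly in $\lambda$.

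For this lower bound we use that $u^\pm$ each lie in $\NN_{p,1}$ of the dilated graph. Therefore $\JJ^{nod}\ge2\JJ_{p,\sqrt\lambda\G}(1)$. It then suffices that action ground state levels at $\lambda=1$ on dilated graphs are bounded below by the half-line soliton level. This follows from the Gagliardo--Nirenberg inequality on graphs with edges of length at least $1$: its constants are uniform, because long edges only improve the estimates. Alternatively, we can use the standard rearrangement bound (decreasing rearrangement on the half-line), which gives $\JJ_{p,\mathcal H}(1)\ge \JJ_{p,\R^+}(1)>0$ for any graph $\mathcal H$ where rearrangement applies. Since $\sqrt\lambda\G$ is compact, we should be careful here. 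Using a rearrangement onto an interval of the same total length, together with the monotonicity of the interval level in its length, we still obtain a uniform positive bound.

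For $p=6$ the exponent equals $1$, so $\JJ^{nod}_{6,\G}(\lambda)=\lambda\,\JJ^{nod}_{6,\sqrt\lambda\G}(1)$. We must then show that
\[
\JJ^{nod}_{6,\sqrt\lambda\G}(1)=\tfrac12\mu_\G^{nod}+O(e^{-c\sqrt\lambda}).
\]
For $p=6$ the mass is scale invariant: the half-soliton on $\R^+$ has action $\mu_{\R^+}/2$ and mass $\mu_{\R^+}$, and the soliton on $\R$ has action $\mu_\R/2$.

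\emph{Upper bound.} We build test functions by placing a positive and a negative cut-off half-soliton at two pendant vertices, or a half-soliton at the single pendant and a full soliton in the interior of an edge, or two full solitons in the interiors of edges when there is no pendant. Each bump is then projected onto the Nehari manifold. Cut-off errors are exponentially small in the edge lengths, which are of order $\sqrt\lambda$. The positive and negative parts have disjoint supports, so each part is projected separately.

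\emph{Lower bound.} A sharp lower bound is needed. The plan is to show that each part $u^\pm$ of a nodal action ground state concentrates, after rescaling, either at a pendant vertex or in the interior of an edge. We take a concentration-compactness limit on the dilated graphs: the limit is a star graph, $\R$, or $\R^+$. On star graphs with at least three half-lines the action ground state level is at least that of $\R$. Two distinct bumps can share a pendant only at a cost: either the sign change forces both into a single half-line, which is worse, or they sit at different pendants. Counting the available pendants then gives exactly $\mu^{nod}_\G/2$ as the infimum of the sum of the two parts.

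The exponential error estimate is the delicate step. We plan to obtain it by showing that every limit profile equals the soliton up to exponentially small corrections, since solutions decay like $e^{-\sqrt\lambda\,d}$ away from the concentration points. The statement only asks for $o(e^{-\gamma\sqrt\lambda})$ for some $\gamma>0$, which is weaker. So it is enough to derive the upper bound with an explicit exponential error. For the lower bound, any $o(1)$ error that is then improved to an exponential one by decay estimates of the profiles suffices.

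Finally, \eqref{eq:asM6} follows from \eqref{eq:asJ6} together with the derivative relations \eqref{eq:derJnod}. By the scaling, the mass at $p=6$ equals the mass of the rescaled ground state on $\sqrt\lambda\G$. By the blow-up analysis above, this ground state converges to a pair of soliton or half-soliton profiles whose masses add up to $\mu^{nod}_\G$. Strong $L^2$ convergence, with no mass escaping into the dilated edges thanks to exponential decay, then yields $Q^{nod,\pm}_{6,\G}(\lambda)\to\mu^{nod}_\G$.

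We expect the main obstacle to be the lower bound in the $p=6$ blow-up, namely ruling out cheaper configurations for nodal states on the dilated graph. This requires a careful profile decomposition in which each nodal part concentrates at exactly one point and feels exactly one of the limit geometries $\R^+$, $\R$, or a star.
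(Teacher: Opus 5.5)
Your argument for \eqref{eq:asJ/l} is correct. It differs from the paper's only in packaging. You rescale to $\sqrt\lambda\,\G$ and use a Sobolev constant that is uniform over graphs whose edges have length at least one. The paper instead applies $J_{p,\lambda}(v,\G)\ge\EE_{p,\G}(\mu)+\frac\lambda2\mu$ to $v=u^\pm$ and uses that $\mu$ is arbitrary. In your rearrangement alternative, compactness of the graph is not an obstacle: each nodal part vanishes somewhere, so its decreasing rearrangement on $\R^+$ is admissible, as in Proposition \ref{prop:E>0}. At $p=6$ this even gives the exact bound $\JJ^{nod}_{6,\G}(\lambda)\ge\lambda\mu_{\R^+}$, but not the line-level bounds needed when $\G$ has fewer than two pendants.

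The genuine gap is the lower bound at $p=6$. A concentration-compactness limit on the dilated graphs yields at best $\JJ^{nod}_{6,\sqrt\lambda\G}(1)\to\mu^{nod}_\G/2$, that is, $\JJ^{nod}_{6,\G}(\lambda)=\frac{\lambda\mu^{nod}_\G}{2}+o(\lambda)$. The statement requires an $o(e^{-\gamma\sqrt\lambda})$ error in the unscaled level, and already an $o(1)$ error is what is used in \eqref{eq:fmu4} and in Step 2 of the proof of Theorem \ref{thm:nonun}. Your claim that an $o(1)$ error can be ``improved to an exponential one by decay estimates of the profiles'' is exactly the hard step, and you do not argue it. The nodal profile decomposition (star-graph limits, bumps sharing a pendant) is likewise only outlined.

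The idea you are missing is that no nodal blow-up analysis is needed.
\begin{itemize}
\item Since $u_\lambda^\pm\in\NN_{6,\lambda}(\G)$, one has $J_{6,\lambda}(u_\lambda^\pm,\G)\ge\JJ_{6,\G}(\lambda)$. The expansion of this non-nodal level, namely $\JJ_{6,\R^+}(\lambda)$ or $\JJ_{6,\R}(\lambda)$ up to $o(e^{-\gamma\sqrt\lambda})$ according to whether $\G$ has a pendant, is already provided by \cite[Proposition 3.2]{D25}.
\item If $\G$ has at most one pendant, at least one nodal part vanishes at every vertex of degree one. Extend it by zero to the pendant-free graph $\widetilde\G$ obtained by attaching a loop at each such vertex. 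This gives $J_{6,\lambda}(u_\lambda^\pm,\G)\ge\JJ_{6,\widetilde\G}(\lambda)=\JJ_{6,\R}(\lambda)+o(e^{-\gamma\sqrt\lambda})$.
\item Test functions on intervals with a Dirichlet endpoint, whose levels are also expanded in \cite{D25}, supply the matching upper bound.
\end{itemize}

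For \eqref{eq:asM6}, your first justification does not work. The inequalities \eqref{eq:derJnod} only relate $Q^{nod,\pm}_{6,\G}$ to first-order Dini derivatives. Sup-norm closeness of $\JJ^{nod}_{6,\G}$ to an affine function does not control those derivatives without additional second-order, one-sided information. The paper instead derives the asymptotics \eqref{eq:asJpm} for each part separately. It combines the per-part lower bounds with an upper bound, obtained by showing via \eqref{eq:supp>a} and the connectedness of nodal domains (Remark \ref{rem:2nod}) that each $\G_\lambda^\pm$ contains an interval of fixed length ending at a degree-one vertex. It then concludes as in the final part of the proof of \cite[Proposition 3.2]{D25}. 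Your alternative via strong $L^2$ convergence of rescaled profiles would work only once the profile decomposition is actually carried out.
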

		\begin{proof}
		Observe first that, for every $\lambda\in\R$ and every $v\in\NN_{p,\lambda}(\G)$, we have
		\begin{equation}
		\label{eq:J>E}
		J_{p,\lambda}(v,\G)\geq\EE_{p,\G}(\mu)+\frac\lambda2\mu\qquad\forall \mu>0.
		\end{equation}
		Indeed, since $v\in\NN_{p,\lambda}(\G)$, then $J_{p,\lambda}(v,\G)\geq J_{p,\lambda}(tv,\G)$ for every $t>0$, that is
		\[
			J_{p,\lambda}(v,\G)\geq J_{p,\lambda}(tv,\G)=E_{p}(tv,\G)+\frac{\lambda\|tv\|_2^2}{2}\geq\EE_{p,\G}(\|tv\|_2^2)+\frac{\lambda\|tv\|_2^2}{2}\qquad\forall t>0\,,
		\]
		that is \eqref{eq:J>E} since $\|tv\|_2$ covers the whole interval $(0,+\infty)$ as $t$ varies in $(0,+\infty)$.
		
		Let now $p\in(2,6)$. For fixed $\mu>0$, taking $v$ to be a nodal action ground state in $\NN_{p,\lambda}^{nod}(\G)$ as $\lambda\to+\infty$, applying \eqref{eq:J>E} to $v^+,v^-$ and summing together gives
		\[
		\liminf_{\lambda\to+\infty}\frac{\JJ_{p,\G}^{nod}(\lambda)}{\lambda}\geq \liminf_{\lambda\to+\infty}\left(\frac{2\EE_{p,\G}(\mu)}\lambda+\mu\right)=\mu\,.
		\]
		By the arbitrariness of $\mu>0$,  \eqref{eq:asJ/l} follows.
		
		Let then $p=6$. Since the actual value of $\mu_\G^{nod}$ depends on the total number of pendants in $\G$, consider first the case of a graph with at least two pendants. By Proposition \ref{prop:NAGS} and Remark \ref{rem:2nod}, if $\lambda$ is large, 
 nodal action ground states $u_\lambda\in\NN_{6,\lambda}^{nod}(\G)$ exist and always have exactly two nodal regions. Let $\G_\lambda^\pm$ be the support of $u_\lambda^\pm$, and observe that there exists $\alpha>0$ independent of $\lambda$ such that
		\begin{equation}
			\label{eq:supp>a}
		|\G_\lambda^\pm|\geq\alpha\,.
		\end{equation}
		Indeed, if this were not the case, there would exist $\lambda_n\to+\infty$ and, for instance, $|\G_{\lambda_n}^-|\to0$ as $n\to\infty$. Letting $x_n\in\G$ be such that $\|u_{\lambda_n}^-\|_\infty=-u_{\lambda_n}(x_n)$, this would imply that $u_{\lambda_n}(x_n+\delta_n)=0$ for some $\delta_n\to0^+$ as $n\to\infty$. Note that, since $\G$ has finitely many edges, without loss of generality we can always assume that the points $x_n+\delta_n$, $x_n+3\delta_n$, $x_n+5\delta_n$ belong to the interior of the same edge of $\G$ for every $n$. However, since on every edge of the graph $u_{\lambda_n}$ is the restriction of a periodic solution to \eqref{eq:nlsnomass}, it would then follow that $u_{\lambda_n}$ vanish at these three points and it changes sign when crossing each of them. Therefore, $u_{\lambda_n}$ would be strictly negative on $(x_n+3\delta_n,x_n+5\delta_n)$ and strictly positive both in a left neighborhood of $x_n+3\delta_n$ and in a right neighborhood of $x_n+5\delta_n$, namely $(x_n+3\delta_n,x_n+5\delta_n)$ would be a subset of $\G_{\lambda_n}^-$ not connected to $x_n$, which belongs to $\G_{\lambda_n}^-$ too by assumption. This is impossible, as $\G_{\lambda_n}^{-}$ is connected by Remark \ref{rem:2nod}, and we obtain \eqref{eq:supp>a}.
		
		Observe that $\G_\lambda^\pm$ contain at least one pendant each. Indeed, 
		denoting by $m$ the length of the shortest pendant in $\G$, since there are at least two pendants it holds
		\begin{equation}
			\label{eq:supJ}
		\JJ_{6,\G}^{nod}(\lambda)\leq 2\inf_{\substack{v\in\NN_{6,\lambda}([0,m]) \\ v(m)=0}}J_{6,\lambda}(v,[0,m])=2\JJ_{6,\R^+}(\lambda)+o\left(e^{-\gamma\sqrt\lambda}\right)\qquad\text{as }\lambda\to+\infty
		\end{equation}
		(the asymptotic expansion of the action ground state level on $[0,m]$ with homogeneous Dirichlet condition at the final endpoint of the interval is given by \cite[Proposition 3.2]{D25}). Assume then by contradiction that $\G_\lambda^-$ does not contain any pendant, namely $u_\lambda^-$ vanishes at every vertex of degree one of $\G$. Let then $\widetilde\G$ be the graph obtained by $\G$ attaching to each of its vertices of degree one an additional loop of length $1$.  Extending $u_\lambda^-$ to zero on the additional loops of $\widetilde\G$, we can then think of $u_\lambda^-$ as a function in $\NN_{6,\lambda}(\widetilde\G)$. Since by construction $\widetilde\G$ is a graph with no pendant, by \cite[Proposition 3.2]{D25} it would follow
		\[
		J_{6,\lambda}(u_\lambda^-,\G)=J_{6,\lambda}(u_\lambda^-,\widetilde\G)\geq\JJ_{6,\widetilde\G}(\lambda)=\JJ_{6,\R}(\lambda)+o\left(e^{-\gamma_1\sqrt\lambda}\right)
		\]
		as $\lambda\to+\infty$, where the constant $\gamma_1>0$ depends only on the fixed graph $\widetilde\G$. Conversely, since $u_\lambda^+\in\NN_{6,\lambda}(\G)$ and $\G$ has pendants, by \cite[Proposition 3.2]{D25} again
		\begin{equation}
			\label{eq:Jbelu+}
		J_{6,\lambda}(u_\lambda^+,\G)\geq\JJ_{6,\G}(\lambda)=\JJ_{6,\R^+}(\lambda)+o\left(e^{-\gamma_2\sqrt\lambda}\right)\qquad\text{as }\lambda\to+\infty
		\end{equation}
		with $\gamma_2>0$ depending only on $\G$. Coupling with the previous estimate contradicts \eqref{eq:supJ} as soon as $\lambda$ is large enough, showing that $\G_\lambda^-$ must contain at least one pendant. The same argument works for $\G_\lambda^+$.
		
		Since both $\G_\lambda^\pm$ contain a vertex of degree one and they are connected sets according to Remark \ref{rem:2nod}, by \eqref{eq:supp>a} it follows that they both contain an interval of length at least $\sigma:=\min\left\{\alpha,m\right\}$ starting at a vertex of degree one (here $\alpha$ is the number in \eqref{eq:supp>a} and $m$ is the length of the shortest pendant of $\G$). Hence, if we consider the minimization problems
		\[
		\widetilde\JJ_{6,\G_\lambda^\pm}(\lambda):=\inf\left\{ J_{6,\lambda}(v,\G_\lambda^\pm)\,:\, v\in\NN_{6,\lambda}(\G_\lambda^\pm)\,,\quad v=0\text{ on }\overline{\G_\lambda^\pm}\setminus\G_\lambda^\pm\right\}\,,
		\]
		we have
		\begin{equation}
		\label{eq:Jtildeup}
		\widetilde\JJ_{6\,\G_\lambda^\pm}(\lambda)\leq\inf_{\substack{v\in\NN_{6,\lambda}([0,\sigma]) \\ v(\sigma)=0}}J_{6,\lambda}(v,[0,\sigma])=\JJ_{6,\R^+}(\lambda)+ o\left(e^{-\gamma_3\sqrt\lambda}\right)\qquad\text{as }\lambda\to+\infty
		\end{equation}
		where $\gamma_3>0$ depends only on $\sigma$ again by \cite[Proposition 3.2]{D25}. 
		
		Observe now that $u_\lambda^\pm$ belong to the minimization spaces defining $\widetilde\JJ_{6,\G_\lambda^\pm}(\lambda)$ and they satisfy $J_{6,\lambda}(u_\lambda^\pm,\G_\lambda^\pm)=\widetilde\JJ_{6,\G_\lambda^\pm}(\lambda)$. That $\widetilde{\JJ}_{6,\lambda}(\G_\lambda^\pm)$ is attained for every sufficiently large $\lambda$ is guaranteed by Remark \ref{rem:extAGS}, and if $u_\lambda^\pm$ were not action ground states for these problems, one could replace them with the actual ground states to build a new function in $\NN_{6,\lambda}^{nod}(\G)$ with a lower action level, violating that $u_\lambda$ is a nodal action ground state in $\NN_{6,\lambda}^{nod}(\G)$. Hence, by \eqref{eq:Jtildeup} we obtain
		\[
		J_{6,\lambda}(u_\lambda^\pm,\G)\leq\JJ_{6,\R^+}(\lambda)+o\left(e^{-\gamma_3\sqrt\lambda}\right)\qquad\text{as }\lambda\to+\infty\,,
		\]
		and since \eqref{eq:Jbelu+} holds also for $u_\lambda^-$, it yields 
		\begin{equation}
		\label{eq:asJpm}
		J_{6,\lambda}(u_\lambda^{\pm},\G)=\JJ_{6,\R^+}(\lambda)+o\left(e^{-\gamma\sqrt\lambda}\right)\qquad\text{as }\lambda\to+\infty
		\end{equation}
		for a suitable $\gamma>0$ independent of $\lambda$ and of $u_\lambda$.  Recalling that $\displaystyle \JJ_{6,\R^+}(\lambda)=\lambda\mu_{\R^+}/2$ for every $\lambda$, and that $\mu_\G^{nod}=2\mu_{\R^+}$ by \eqref{eq:muGnod}, this completes the proof of \eqref{eq:asJ6} for graphs $\G$ with at least two pendants. Relying on \eqref{eq:asJpm} and arguing exactly as in the final part of the proof of \cite[Proposition 3.2]{D25} then gives 
		\[
		\lim_{\lambda\to+\infty}\|u_\lambda^\pm\|_2^2=\mu_{\R^+}
		\]
		uniformly on the set of nodal action ground states $u_\lambda\in\NN_{6,\G}^{nod}(\lambda)$, in turn implying \eqref{eq:asM6}.

		The remaining cases are treated identically, simply noting that if $\G$ has exactly one pendant, then with no loss of generality one can assume that it always belongs to $\G_\lambda^+$, so that the previous argument now leads to
		\[
		J_{6,\lambda}(u_\lambda^+,\G)=\frac{\lambda\mu_{\R^+}}2+o\left(e^{-\gamma\sqrt\lambda}\right)\,,\qquad J_{6,\lambda}(u_\lambda^-,\G)=\frac{\lambda\mu_{\R}}2+o\left(e^{-\gamma\sqrt\lambda}\right)
		\]
		and 
		\[
		\lim_{\lambda\to+\infty}\|u_\lambda^+\|_{2}^2=\mu_{\R^+}\,,\qquad\lim_{\lambda\to+\infty}\|u_\lambda^-\|_{2}^2=\mu_{\R}\,,
		\]
		whereas when $\G$ has no pendant the same will be true for both $\G_\lambda^+$ and $\G_\lambda^-$, entailing
		\[
		J_{6,\lambda}(u_\lambda^\pm,\G)=\frac{\lambda\mu_{\R}}2+o\left(e^{-\gamma\sqrt\lambda}\right)\,,\qquad \lim_{\lambda\to+\infty}\|u_\lambda^\pm\|_{2}^2=\mu_{\R}\,.\qedhere
		\]
		\end{proof} 
		
		\section{Proof of Theorems \ref{thm:nodex}--\ref{thm:nonun}}
		\label{sec:exun}
		
		This section is devoted to the proof of existence of least energy nodal normalized solutions as given in Theorem \ref{thm:nodex} and of the conditional non-uniqueness result in Theorem \ref{thm:nonun}.

		\begin{proof}[Proof of Theorem \ref{thm:nodex}]
			The main step is to show that, under the assumptions of Theorem \ref{thm:nodex}, the function $f_\mu:\R\to\R$ defined by
			\[
			f_\mu(\lambda):=\JJ_{p,\G}^{nod}(\lambda)-\frac{\lambda\mu}{2}
			\]
			has a global minimum point $\widetilde\lambda$ such that $\widetilde{\lambda}\in(-\lambda_2(\G),+\infty)$. Note first that $f_\mu$ is continuous on $\R$ by Proposition \ref{prop:NAGS}, and one has
			\begin{equation}
			\label{eq:fmu1}
			\JJ_{p,\G}^{nod}(\lambda)\geq\JJ_{p,\G}^{nod}(-\lambda_2(\G))\qquad\forall\lambda\leq-\lambda_2(\G)
			\end{equation}
			and, by the first estimate in \eqref{eq:lev_l_up_down},
			\begin{equation}
			\label{eq:fmu2}
			f_\mu(\lambda)-f_\mu(-\lambda_2(\G))=\JJ_{p,\G}^{nod}(\lambda)-\frac{(\lambda+\lambda_2(\G))\mu}{2}<0\qquad\text{as }\lambda\to-\lambda_2(\G)^+\,.
			\end{equation}
			Moreover, writing
			\[
			f_\mu(\lambda)=\lambda\left(\frac{\JJ_{p,\G}^{nod}(\lambda)}\lambda-\frac\mu2\right),
			\]
			as $\lambda\to+\infty$ by Proposition \ref{prop:as} we obtain
			\begin{equation}
			\label{eq:fmu3}
			f_\mu(\lambda)\to+\infty
			\end{equation}
			for every $\mu>0$ if $p\in(2,6)$ and every $\mu\in(0,\mu_\G^{nod})$ if $p=6$, whereas
			\begin{equation}
			\label{eq:fmu4}
			f_\mu(\lambda)\to0
			\end{equation}
			if $\mu=\mu_\G^{nod}$ and $p=6$. Combining \eqref{eq:fmu1}, \eqref{eq:fmu2} and \eqref{eq:fmu3} proves the existence of the desired global minimizer of $f_\mu$ for every $\mu>0$ if $p\in(2,6)$ and every $\mu\in(0,\mu_\G^{nod})$ if $p=6$. When $\mu=\mu_\G^{nod}$ and $p=6$, if we further assume that $\EE_{6,\G}^{nod}(\mu_\G^{nod})<0$, then there exist $\overline\lambda\in\R$ and $\overline u\in\NN_{6,\overline\lambda}^{nod}(\G)$ such that $\|\overline u\|_2^2=\mu_\G^{nod}$ and $E_{6}(\overline u,\G)<0$. Hence, 
			\[
			f_{\mu_\G^{nod}}(\overline\lambda)\leq J_{6,\overline\lambda}(\overline u,\G)-\frac{\overline\lambda\mu_\G^{nod}}2=E_6(\overline u,\G)<0\,,
			\]
			that together with \eqref{eq:fmu1}, \eqref{eq:fmu2} and \eqref{eq:fmu4} gives again a global minimum point $\widetilde\lambda$ for $f_{\mu_\G^{nod}}$. 
			
			We now show that $\JJ_{p,\G}^{nod}$ is differentiable at $\widetilde\lambda$. Indeed, taking $\varepsilon<0$, we have
			\[
			\begin{split}
				\frac{\JJ_{p,\G}^{nod}(\widetilde\lambda+\varepsilon)-\JJ_{p,\G}^{nod}(\widetilde\lambda)}{\varepsilon}-\frac\mu2&\,=\frac{\JJ_{p,\G}^{nod}(\widetilde\lambda+\varepsilon)-(\widetilde\lambda+\varepsilon)\mu/2-\JJ_{p,\G}^{nod}(\widetilde\lambda)+\widetilde\lambda\mu/2}{\varepsilon}\\
				&\,=\frac{f_\mu(\widetilde\lambda+\varepsilon)-f_\mu(\widetilde\lambda)}{\varepsilon}\leq0
			\end{split}
			\]
			leading to
			\[
			\limsup_{\varepsilon\to0^-}\frac{\JJ_{p,\G}^{nod}(\widetilde\lambda+\varepsilon)-\JJ_{p,\G}^{nod}(\widetilde\lambda)}{\varepsilon}\leq\frac\mu2\,.
			\]
			Arguing analogously with $\varepsilon>0$ yields
			\[
			\liminf_{\varepsilon\to0^+}\frac{\JJ_{p,\G}^{nod}(\widetilde\lambda+\varepsilon)-\JJ_{p,\G}^{nod}(\widetilde\lambda)}{\varepsilon}\geq\frac\mu2\,,
			\]
			and coupling these two estimates with \eqref{eq:derJnod} gives  $(\JJ_{p,\G}^{nod})'(\widetilde\lambda)=\mu/2$. By Proposition \ref{prop:NAGS}, this implies that every nodal action ground state $\widetilde u\in\NN_{p,\widetilde\lambda}^{nod}(\G)$ has mass $\mu$ (ground states exist because $\widetilde\lambda>-\lambda_2(\G)$). Moreover, if $w$ is any sign-changing solution to \eqref{eq:nls} with mass $\mu$ for some $\lambda\in\R$, then $w\in\NN_{p,\lambda}^{nod}(\G)$ and therefore
			\[
			\begin{split}
			E_p(w,\G)=J_{p,\lambda}(w,\G)-\frac{\lambda\|w\|_2^2}2=&\,J_{p,\lambda}(w,\G)-\frac{\lambda\mu}2\geq\JJ_{p,\G}^{nod}(\lambda)-\frac{\lambda\mu}2=f_\mu(\lambda)\geq f_\mu(\widetilde{\lambda})\\
			=&\,\JJ_{p,\G}^{nod}(\widetilde{\lambda})-\frac{\widetilde\lambda\mu}{2}=J_{p,\widetilde{\lambda}}(\widetilde u,\G)-\frac{\widetilde{\lambda}\|\widetilde{u}\|_2^2}2=E_p(\widetilde u,\G)\,.
			\end{split}
			\]
			i.e. $\widetilde u$ is a least energy nodal normalized solution of \eqref{eq:nls} with mass $\mu$. Finally, if $w$ is another least energy nodal normalized solution of \eqref{eq:nls} with mass $\mu$, every inequality in the previous lines becomes an equality. In particular, $J_{p,\lambda}(w,\G)=\JJ_{p,\G}^{nod}(\lambda)$ and $w$ is a nodal action ground state in $\NN_{p,\lambda}^{nod}(\G)$. 
			\end{proof}
		
			For the proof of the non-uniqueness result, we need first to establish some key properties for the Lagrange multipliers of least energy nodal normalized solutions. Denote by 
			\[
			\mathfrak{E}_{p,\G}^{nod}(\mu):=\left\{u\in H_\mu^1(\G)\,:\, u\text{ is a sign-changing solution of \eqref{eq:nls} for some }\lambda\in\R,\,E_p(u,\G)=\EE_{p,\G}^{nod}(\mu)\right\}
			\]
			the set of all least energy nodal normalized solutions with mass $\mu$, and set
			\[
			\Lambda_{p,\G}^{nod,-}(\mu):=\inf_{u\in\mathfrak{E}_{p,\G}^{nod}(\mu)}\LL_p(u,\G)\,,\qquad\Lambda_{p,\G}^{nod,+}(\mu):=\sup_{u\in\mathfrak{E}_{p,\G}^{nod}(\mu)}\LL_p(u,\G)\,.
			\]
			\begin{remark}
				\label{rem:Lnodatt}
				By Theorem \ref{thm:nodex}, for every $p\in(2,6)$ the set $\mathfrak{E}_{p,\G}^{nod}(\mu)$ is not empty for every $\mu>0$. Furthermore, $\Lambda_{p,\G}^{nod,\pm}(\mu)$ are attained. Indeed, let for instance $(u_n)_n\subset\mathfrak{E}_{p,\G}^{nod}(\mu)$ be such that $\LL_{p}(u_n,\G)\to\Lambda_{p,\G}^{nod,-}(\mu)$.  By the proof of Theorem \ref{thm:nodex}, each $u_n$ is a nodal action ground state in $\NN_{p,\lambda_n}^{nod}(\G)$, where $\lambda_n=\LL_p(u_n,\G)$ is a global minimum point of the function $f_\mu$ on $\R$. Since by \eqref{eq:fmu2} there exists $\varepsilon>0$ such that $f_\mu(\lambda)<f_\mu(-\lambda_2(\G))$ for every $\lambda\in[-\lambda_2(\G), -\lambda_2(\G)+\varepsilon]$, recalling also \eqref{eq:fmu1} shows that there exists $\alpha>-\lambda_2(\G)$ (depending only on $\mu$) such that $\lambda_n\geq\alpha$ for every $n$. Hence, arguing as in Step 3 of the proof of Proposition \ref{prop:NAGS} we obtain that there exists $\beta>0$ (depending again only on $\mu$) such that 
				\begin{equation}
				\label{eq:upm>0}
				\|u_n^\pm\|_p\geq\beta\qquad\forall n\,.
				\end{equation}
				Now, if $p\in(2,6)$, by $E_p(u_n,\G)=\EE_{p,\G}^{nod}(\mu)$ and the standard Gagliardo-Nirenberg inequality
				\begin{equation}
				\label{eq:GN}
				\|v\|_p^p\leq K_p\|v\|_2^{\frac p2+1}\|v\|_{H^1}^{\frac p2-1}\qquad\forall v\in H^1(\G)
				\end{equation}
				it follows that $(u_n)_n$ is bounded in $H^1(\G)$. Hence, up to subsequences $u_n\rightharpoonup u$ in $H^1(\G)$ and $u_n\to u$ in $L^q(\G)$ for every $q\geq2$, for some $u\in H^1(\G)$. Since $\|u_n\|_2^2=\mu$ for every $n$, then $\|u\|_2^2=\mu$ too, so that $u\not\equiv0$ on $\G$ and $u\in H_\mu^1(\G)$. Moreover, by \eqref{eq:upm>0} and the strong convergence of $u_n$ to $u$ in $L^p(\G)$ we also have that $u^\pm\not\equiv0$ on $\G$, that is $u$ is sign-changing. Indeed, if it were for instance $u\geq0$ on $\G$, by the fact that $u_n\to u$ a.e. we would have $u_n^-\to0$ a.e., contradicting \eqref{eq:upm>0}.  Furthermore, since $\LL_p(u_n,\G)\to\Lambda_{p,\G}^{nod,-}(\mu)$, then $u$ solves \eqref{eq:nls} with $\lambda=\Lambda_{p,\G}^{nod,-}(\mu)$.  Hence, $u\in\mathfrak{E}_{p,\G}^{nod}(\mu)$ and, by weak lower semicontinuity, 
				\[
				\EE_{p,\G}^{nod}(\mu)\leq E_p(u,\G)\leq\lim_{n \to\infty}E_p(u_n,\G)=\EE_{p,\G}^{nod}(\mu)\,,
				\]
				i.e. $u\in\mathfrak{E}_{p,\G}^{nod}(\mu)$ and $\Lambda_{p,\G}^{nod,-}(\mu)$ is attained. The same is true for $\Lambda_{p,\G}^{nod,+}(\mu)$. 
				
				Note that the same argument also shows that, if $u_\mu\in\mathfrak{E}_{p,\G}^{nod}(\mu)$ and $\mu\to\overline\mu>0$, then there exists $u_{\overline\mu}\in\mathfrak{E}_{p,\G}^{nod}(\overline\mu)$ such that, up to subsequences, $u_\mu\rightharpoonup u_{\overline\mu}$ in $H^1(\G)$ and $u_\mu\to u_{\overline\mu}$ in $L^q(\G)$ for every $q\geq2$. This is readily seen by complementing the previous discussion with the fact that the global minimum points of $f_\mu$ converge (up to subsequences) to global minimum points of $f_{\overline\mu}$ as $\mu\to\overline\mu$. This last property follows by the fact that global minimum points of $f_\mu$ are bounded from above and away from $-\lambda_2(\G)$ uniformly on $\mu$ in a neighborhood of any fixed $\overline\mu>0$ by \eqref{eq:fmu2} and \eqref{eq:fmu3}, and that $f_\mu$ is Lipschitz continuous with respect to $\mu$ when $\lambda$ varies in bounded sets of $\R$. 
			\end{remark}
			The next proposition collects the preliminary results we need to prove Theorem \ref{thm:nonun}, showing that properties known so far for energy ground states hold true also for least energy nodal normalized solutions.
			\begin{proposition}
				\label{prop:Lpm}
				Let $p\in(2,6)$. The following properties hold:
				\begin{itemize}
					\item[(i)] if $0<\mu_1<\mu_2$, then $\Lambda_{p,\G}^{nod,+}(\mu_1)<\Lambda_{p,\G}^{nod,-}(\mu_2)$;
					
					\item[(ii)] there exists an at most countable set $Z_{p,\G}^{nod}\subset\R^+$ such that
					\[
					\Lambda_{p,\G}^{nod,-}(\mu)=\Lambda_{p,\G}^{nod,+}(\mu)=:\Lambda_{p,\G}^{nod}(\mu)\qquad\forall\mu\in\R^+\setminus Z_{p,\G}^{nod}\,;
					\]
					\item[(iii)] the map $\Lambda_{p,\G}^{nod}:\R^+\setminus Z_{p,\G}^{nod}\to\R$ is strictly increasing, and
					\begin{equation}
						\label{eq:limLpm}
					\lim_{\mu\to0^+}\Lambda_{p,\G}^{nod}(\mu)=-\lambda_2(\G)\,,\qquad\lim_{\mu\to+\infty}\Lambda_{p,\G}^{nod}(\mu)=+\infty\,.
					\end{equation}
				\end{itemize}
			\end{proposition}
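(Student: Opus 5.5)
The approach is to work entirely through the function $f_\mu(\lambda)=\JJ_{p,\G}^{nod}(\lambda)-\lambda\mu/2$ from the proof of Theorem \ref{thm:nodex}. By that proof and Remark \ref{rem:Lnodatt}, $\lambda\in\{\LL_p(u,\G):u\in\mathfrak{E}_{p,\G}^{nod}(\mu)\}$ if and only if $\lambda$ is a global minimum point of $f_\mu$. At every such point $\JJ_{p,\G}^{nod}$ is differentiable with derivative $\mu/2$. Hence $\Lambda_{p,\G}^{nod,\pm}(\mu)$ are the largest and smallest global minimizers of $f_\mu$, and they lie in $(-\lambda_2(\G),+\infty)$.

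For (i), let $\lambda_1$ be a global minimum point of $f_{\mu_1}$ and $\lambda_2$ one of $f_{\mu_2}$, with $\mu_1<\mu_2$. Adding $f_{\mu_1}(\lambda_1)\le f_{\mu_1}(\lambda_2)$ and $f_{\mu_2}(\lambda_2)\le f_{\mu_2}(\lambda_1)$ gives
\[
(\lambda_2-\lambda_1)(\mu_2-\mu_1)\ge0,
\]
so $\lambda_1\le\lambda_2$. To get strict inequality, suppose $\lambda_1=\lambda_2=:\lambda$. Then $\lambda$ minimizes both $f_{\mu_1}$ and $f_{\mu_2}$, so the differentiability argument in the proof of Theorem \ref{thm:nodex} gives $(\JJ_{p,\G}^{nod})'(\lambda)=\mu_1/2$ and also $=\mu_2/2$. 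This contradicts $\mu_1\ne\mu_2$. Taking $\lambda_1=\Lambda^{nod,+}(\mu_1)$ and $\lambda_2=\Lambda^{nod,-}(\mu_2)$, which are attained by Remark \ref{rem:Lnodatt}, yields (i).

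For (ii), part (i) shows that the intervals $[\Lambda^{nod,-}(\mu),\Lambda^{nod,+}(\mu)]$ are pairwise disjoint as $\mu$ varies. A family of pairwise disjoint nondegenerate intervals in $\R$ is at most countable, since each contains a distinct rational. This gives the countable exceptional set $Z_{p,\G}^{nod}$. Strict monotonicity in (iii) is then immediate from (i).

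For the limits in (iii), first take $\mu\to+\infty$ and $\lambda_\mu:=\Lambda^{nod}(\mu)$. Suppose $\lambda_\mu$ stayed bounded by some $M$. Using $(\JJ_{p,\G}^{nod})'(\lambda_\mu)=\mu/2$ and Proposition \ref{prop:NAGS}, nodal action ground states at $\lambda_\mu$ would have mass $\mu\to\infty$. But by the compactness in Step 2 of Proposition \ref{prop:NAGS}, action ground states for $\lambda$ in a compact subset of $(-\lambda_2(\G),+\infty)$ have bounded $L^2$ norm; and $\lambda_\mu$ stays away from $-\lambda_2(\G)$ because $\mu$ is large, by the first step below. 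Alternatively, $\JJ^{nod}$ is locally Lipschitz, so its derivative is bounded on compact sets. Either way this is a contradiction.

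Now take $\mu\to0^+$. Since $\JJ^{nod}$ is increasing, the derivative relation and the Lipschitz bound give the following: if $\lambda_\mu\ge\alpha>-\lambda_2(\G)$ along a sequence, then masses of nodal action ground states at $\lambda\ge\alpha$ would accumulate at $0$. I will exclude this with a lower mass bound. For nodal action ground states at $\lambda\ge\alpha$, Step 3 of Proposition \ref{prop:NAGS} gives $\|u^\pm\|_p\ge\beta$ when $\lambda$ is bounded. For $\lambda$ large, Proposition \ref{prop:as} and the Gagliardo--Nirenberg inequality \eqref{eq:GN} give $\mu\gtrsim$ a positive quantity; indeed \eqref{eq:asJ/l} combined with $\JJ'$ being the mass shows the mass tends to $+\infty$. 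Combining the bounded-$\lambda$ and large-$\lambda$ regimes gives a positive lower bound on the mass whenever $\lambda\ge\alpha$. Hence $\lambda_\mu\to-\lambda_2(\G)$, since $\lambda_\mu>-\lambda_2(\G)$ always.

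The main obstacle is this lower-bound step. I will handle it carefully via strong $L^2$ convergence of nodal action ground states. A zero-mass limit would contradict $\|u^\pm\|_p\ge\beta$ on bounded $\lambda$-ranges. For large $\lambda$, I will use $Q^{nod,\pm}$ and the convexity-type monotonicity: since $\JJ^{nod}$ is increasing with $\JJ^{nod}(\lambda)/\lambda\to\infty$, its a.e.\ derivative cannot remain small.
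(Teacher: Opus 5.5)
Your part (i) is the paper's argument. You add the two minimality inequalities for $f_{\mu_1},f_{\mu_2}$, and you exclude equality because a common minimizer would force $(\JJ_{p,\G}^{nod})'(\lambda)=\mu_1/2=\mu_2/2$. The paper says the same thing through Theorem \ref{thm:nodex}: two nodal action ground states in the same nodal Nehari set that are also least energy nodal solutions have equal mass.

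The genuinely different parts are (ii) and the limits in (iii).

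\textbf{Part (ii).} You use (i) to see that the intervals $[\Lambda_{p,\G}^{nod,-}(\mu),\Lambda_{p,\G}^{nod,+}(\mu)]$ are pairwise disjoint, so only countably many are nondegenerate. The paper instead takes $Z_{p,\G}^{nod}$ to be the countable discontinuity set of the monotone maps $\Lambda_{p,\G}^{nod,\pm}$. It then shows by a sandwich argument that these maps are continuous at the same points and coincide there. Your route is shorter and proves the statement as written. However, your $Z_{p,\G}^{nod}$ is a priori only the set where the two values differ. The proof of Theorem \ref{thm:nonun} needs that $Z_{p,\G}^{nod}=\emptyset$ implies continuity, hence surjectivity, of $\Lambda_{p,\G}^{nod}$. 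With your definition this requires an extra input from Remark \ref{rem:Lnodatt}: one-sided limits of multipliers are again multipliers of least energy nodal solutions at the same mass. That fact shows a jump forces $\Lambda_{p,\G}^{nod,-}(\mu)<\Lambda_{p,\G}^{nod,+}(\mu)$.

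\textbf{Limits in (iii).} For $\mu\to+\infty$, your local Lipschitz bound is a valid soft substitute for the paper's argument. The derivative of $\JJ_{p,\G}^{nod}$ at $\lambda_\mu$ equals $\mu/2$, while a Lipschitz constant on compacts bounds it. The paper instead uses H\"older's inequality together with $\JJ_{p,\G}^{nod}(\lambda)\le C_1(\lambda+\lambda_2(\G))^{p/(p-2)}$. For $\mu\to0^+$, the paper shows directly, via Gagliardo--Nirenberg and Young, that $\|u_\mu\|_p\to0$, hence $\JJ_{p,\G}^{nod}(\lambda_\mu)\to0$. You instead argue by contradiction through a lower bound on masses.

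\textbf{A sub-step that fails as written.} In that contradiction argument, for large $\lambda$ you infer a lower mass bound from $\JJ_{p,\G}^{nod}(\lambda)/\lambda\to+\infty$ and a \emph{convexity-type monotonicity}. This does not work. Superlinear growth only prevents the a.e.\ derivative from staying small on a whole half-line; it says nothing at an individual $\lambda$. Moreover, $Q_{p,\G}^{nod,\pm}$ are not monotone in general. Under the hypothesis of Theorem \ref{thm:nonun}, Step 3 of its proof exhibits $\overline\lambda_1<\overline\lambda_2$ with $Q_{p,\G}^{nod,-}(\overline\lambda_1)>Q_{p,\G}^{nod,+}(\overline\lambda_2)$ for $p$ near $6$.

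The large-$\lambda$ regime is, however, unnecessary. By (i), $\lambda_\mu\le\Lambda_{p,\G}^{nod,+}(\mu_0)$ for all $\mu<\mu_0$. So only the bounded range $[\alpha,\Lambda_{p,\G}^{nod,+}(\mu_0)]$ matters, and there your compactness argument (a uniform $H^1$ bound plus $\|u^\pm\|_p\ge\beta$) does give a positive lower bound on the mass. If you want the large-$\lambda$ bound anyway, use Gagliardo--Nirenberg instead. Since $\|u\|_p^p=\frac{2p}{p-2}\JJ_{p,\G}^{nod}(\lambda)\to\infty$ and $\|u'\|_2^2\le\|u\|_p^p$ for $\lambda>0$, the inequality forces $\|u\|_2\to\infty$ because $p<6$.
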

			\begin{proof}
				To prove (i), let $0<\mu_1<\mu_2$ and $u_1\in\mathfrak{E}_{p,\G}^{nod}(\mu_1)$, $u_2\in\mathfrak{E}_{p,\G}^{nod}(\mu_2)$ be such that $\Lambda_{p,\G}^{nod,+}(\mu_1)=\LL_p(u_1,\G)=:  \widehat{\lambda}_1$ and $\Lambda_{p,\G}^{nod,-}(\mu_2)=\LL_p(u_2,\G)=:  \widehat {\lambda}_2$.

(it is possible by Remark \ref{rem:Lnodatt}). By the proof of Theorem \ref{thm:nodex}, 
				\[
				f_{\mu_1}( \widehat{\lambda}_1)=\min_{\lambda\in\R}f_{\mu_1}(\lambda)\,,\qquad f_{\mu_2}( \widehat{\lambda}_2)=\min_{\lambda\in\R}f_{\mu_2}(\lambda)\,.
				\]
				Recalling the definition of $f_\mu$ then yields
				\[
				\begin{cases}
					\JJ_{p,\G}^{nod}(\widehat{\lambda}_1)-\frac{\widehat{\lambda}_1  \mu_1}2\leq\JJ_{p,\G}^{nod}( \widehat{\lambda}_2)-\frac{ \widehat{\lambda}_2 \mu_1}2 & \\
					\JJ_{p,\G}^{nod}( \widehat{\lambda}_2  )-\frac{ \widehat{\lambda}_2  \mu_2}2\leq\JJ_{p,\G}^{nod}(\widehat{\lambda}_1 )-\frac{ \widehat{\lambda}_1 \mu_2}2\,, & 
				\end{cases}
				\]
				that is
				\[
				\frac{\mu_1}2(\widehat{\lambda}_2  - \widehat{\lambda}_1  )\leq\JJ_{p,\G}^{nod}(\widehat{\lambda}_2 )-\JJ_{p,\G}^{nod}(\widehat{\lambda}_1  )\leq\frac{\mu_2}2( \widehat{\lambda}_2  -\widehat{\lambda}_1  )\,.
				\]
				By $\mu_2>\mu_1$ we then have $ \widehat{\lambda}_2  \geq \widehat{\lambda}_1  $. In fact, the inequality is strict, because if it were $\widehat{\lambda}_1  =\widehat{\lambda}_2  $, then $u_1,u_2$ would be two nodal action ground states in the same nodal Nehari set. Since $u_1,u_2$ are also least energy nodal normalized solutions by assumption, Theorem \ref{thm:nodex} would then imply that they have the same mass $\mu_1=\mu_2$, leading to a contradiction. Hence, $\widehat{\lambda}_1 <\widehat{\lambda}_2  $ and (i) is proved.
				
				We now prove (ii). By (i), the maps $\Lambda_{p,\G}^\pm$ are strictly increasing on $\R^+$. Hence, they have at most countably many discontinuity points. Moreover, for every fixed $\bar{\mu}>0$ and for sufficiently small $\delta>0$, by (i) we have
				\begin{equation*}
					\Lambda_{p,\G}^{nod,+}(\bar{\mu}-2\delta)<\Lambda_{p,\G}^{nod,-}(\bar{\mu}-\delta)
					<\Lambda_{p,\G}^{nod,-}(\bar{\mu})<\Lambda_{p,\G}^{nod,-}(\bar{\mu}+\delta)< \Lambda_{p,\G}^{nod,+}(\bar{\mu}+2\delta).
				\end{equation*}
				Therefore, if $\Lambda_{p,\G}^{nod,+}$ is continuous at $\bar{\mu}$, passing to the limit as $\delta \to 0^+$ shows that $\Lambda_{p,\G}^{nod,-}$ is continuous at $\bar\mu$ too, and $\Lambda_{p,\G}^{nod,-}(\bar\mu)=\Lambda_{p,\G}^{nod,+}(\bar\mu)$. The same argument at any continuity point of $\Lambda_{p,\G}^{nod,-}$ allows to conclude that $\Lambda_{p,\G}^{nod,\pm}$ are continuous exactly at the same points, where they also coincide. Since the set  $Z_{p,\mathcal{G}}^{nod} \subset \R^+$ where $\Lambda_{p,\G}^{nod,\pm}$ are not continuous is at most countable, (ii) follows.
				
				Since (i) ensures that the map $\Lambda_{p,\G}^{nod}:\R^+\setminus Z_{p,\G}^{nod}\to\R$ is strictly increasing, to conclude we are left to prove \eqref{eq:limLpm}. We start with the limit as $\mu\to0^+$. Let $u_\mu\in\mathfrak{E}_{p,\G}^{nod}(\mu)$ be a least energy nodal normalized solution with mass $\mu$. By Theorem \ref{thm:nodex}, it is a nodal action ground state in $\NN_{p,\lambda_\mu}^{nod}(\G)$ with $\lambda_\mu:=\LL_p(u_\mu,\G)$. To show that $\lambda_\mu\to-\lambda_2(\G)$ as $\mu\to0^+$, by Proposition \ref{prop:NAGS} it is enough to show that $\JJ_{p,\G}^{nod}(\lambda_\mu)=J_{p,\lambda_\mu}(u_\mu,\G)\to0$, that is $\|u_\mu\|_p\to0$. To see this, note first that, by $u_\mu\in\NN_{p,\lambda_\mu}^{nod}(\G)$, Gagliardo-Nirenberg inequality \eqref{eq:GN} and Young inequality, 
				\[
				\|u_\mu'\|_2^2+\lambda_\mu\|u_\mu\|_2^2=\|u_\mu\|_p^p\leq K_p\|u_\mu\|_2^{\frac p2+1}\|u_\mu\|_{H^1}^{\frac p2-1}\leq \frac{(6-p)K_p^{\frac4{6-p}}}{4}\mu^{\frac{p+2}{6-p}}+\frac{p-2}{4}\|u\|_{H^1}^2\,,
				\]
				in turn yielding
				\[
				\frac{6-p}4\|u_\mu'\|_2^2+\left(\lambda_\mu-\frac{p-2}{4}\right)\mu\leq\frac{(6-p)K_p^{\frac4{6-p}}}4\mu^{\frac{p+2}{6-p}}\,.
				\]
				Since $p\in(2,6)$ and $\lambda_\mu>-\lambda_2(\G)$ for every $\mu>0$, this shows that $\|u_\mu'\|_2$ is bounded as $\mu\to0^+$. Relying again on the Gagliardo-Nirenberg inequality \eqref{eq:GN}, this leads to $\|u_\mu\|_p\to0$ as desired, and the first part of \eqref{eq:limLpm} follows. 
				
				The limit for $\mu\to+\infty$ is a direct consequence of H\"older inequality and the first estimate in \eqref{eq:lev_l_up_down}, since
				\[
				\mu=\|u_\mu\|_2^2\leq |\G|^{1-\frac2p}\|u_\mu\|_p^2= |\G|^{1-\frac2p}\left(\frac{2p}{p-2}\JJ_{p,\G}^{nod}(\lambda_\mu)\right)^{\frac2p}\leq C(\lambda_\mu+\lambda_2(\G))^{\frac2{p-2}}
				\]
				for a suitable $C>0$ depending only on $p$ and $\G$, entailing $\lambda_\mu\to+\infty$ as $\mu\to+\infty$. 
			\end{proof}
			
			\begin{proof}[Proof of Theorem \ref{thm:nonun}]
				
				The line of the proof is the following. First, we prove that, as $p$ varies, the masses of nodal action ground states $Q_{p,\G}^{nod,\pm}$ satisfy, for every fixed $\lambda>-\lambda_2(\G)$ and $\bar p>2$,
				\begin{equation}
					\label{eq:ptopbar}
					\limsup_{p\to \bar{p}} Q_{p,\mathcal{G}}^{nod,+}(\lambda) \leq Q_{\bar{p},\mathcal{G}}^{nod,+}(\lambda)\,,\qquad   \liminf_{p\to \bar{p}}Q_{p,\mathcal{G}}^{nod,-}(\lambda) \geq Q_{\bar{p},\mathcal{G}}^{nod,-}(\lambda)\,.
				\end{equation}
				Second, we show that when $p=6$ there exist two values $\overline\lambda_2>\overline\lambda_1>-\lambda_2(\G)$ such that 
				\begin{equation}
					\label{eq:Qp=6}
					Q_{6,\G}^{nod,+}(\overline\lambda_2)<Q_{6,\G}^{nod,-}(\overline\lambda_1)\,.
				\end{equation}
				Third, by \eqref{eq:ptopbar} we obtain that the existence of two such values of $\lambda$ is preserved for every $p$ sufficiently close to $6$, and combining with the previously established properties of nodal action ground states and least energy nodal normalized solutions this will complete the proof of Theorem \ref{thm:nonun}.
				
				\smallskip
				{\em Step 1: proof of \eqref{eq:ptopbar}.} We begin by showing, for every fixed $\lambda>-\lambda_2(\G)$ and $\bar p>2$, the nodal action ground state level $\JJ_{p,\G}^{nod}(\lambda)$ is continuous as $p\to\bar p$.
				
				Let $p_n\to \bar p$ as $n\to\infty$ and, given any nodal action ground state $v\in\NN_{\bar p,\lambda}^{nod}(\G)$, observe that
				\[
				\lim_{n\to \infty}n_{p_n,\lambda}^{p_n-2}(v^{\pm})=\lim_{n\to \infty} \frac{\|(v^{\pm})'\|_{2}^{2}+\lambda \|v^{\pm}\|_{2}^{2}}{\|v^{\pm}\|_{{p_n}}^{p_n}}=\lim_{n\to \infty}\frac{\|v^{\pm}\|_{{\bar{p}}}^{\bar{p}}}{\|v^{\pm}\|_{{p_n}}^{p_n}}=1\,.
				\]  
				Hence, since by definition $n_{p_n,\lambda}(v^+)v^++n_{p_n,\lambda}(v^-)v^-\in\NN_{p_n,\lambda}^{nod}(\G)$,
				\begin{equation}
					\label{eq:limsupJ}
				\begin{split}
				\limsup_{n\to\infty}\JJ_{p_n,\G}^{nod}(\lambda)&\,\leq\limsup_{n\to\infty} J_{p_n,\lambda}(n_{p_n,\lambda}(v^+)v^++n_{p_n,\lambda}(v^-)v^-,\G)\\
				&\,=\left(\frac12-\frac1{\bar p}\right)\limsup_{n\to\infty}\left(n_{p_n,\lambda}(v^+)^p\|v^+\|_{p_n}^{p_n}+n_{p_n,\lambda}(v^-)^p\|v^-\|_{p_n}^{p_n}\right)\\
				&\,=\left(\frac12-\frac1{\bar p}\right)\left(\|v^+\|_{\bar p}^{\bar p}+\|v^-\|_{\bar p}^{\bar p}\right)=\left(\frac12-\frac1{\bar p}\right)\|v\|_{\bar p}^{\bar p}=\JJ_{\bar p,\G}^{nod}(\lambda)\,.
				\end{split}
				\end{equation}
				Conversely, let $v_n\in\NN_{p_n,\lambda}^{nod}(\G)$ be such that $J_{p_n,\lambda}(v_n,\G)=\JJ_{p_n,\G}^{nod}(\lambda)$. By \eqref{eq:limsupJ}, H\"older inequality, and the fact that
\[ J_{p_n,\lambda}(v_n,\G)=\left(\frac12-\frac1{p_n}\right)\|v_n\|_{p_n}^{p_n}=\left(\frac12-\frac1{p_n}\right)(\|v_n'\|_2^2+\lambda\|v_n\|_2^2)
\]
 and $\lambda>-\lambda_2(\G)$, it is readily seen that $(v_n)_n$ is bounded in $H^1(\G)$. Hence, there exists $\bar v\in H^1(\G)$ such that, up to subsequences, $v_n\rightharpoonup\bar v$ in $H^1(\G)$ and $v_n\to \bar v$ in $L^q(\G)$ for every $q\geq2$ as $n\to\infty$. Since $\lambda$ is fixed, by the second estimate in \eqref{eq:lev_l_up_down} we also have that $\JJ_{p_n,\G}^{nod}(\lambda)$ is bounded away from zero uniformly in $n$, and so is $\|v_n\|_{p_n}$. Since the boundedness in $H^1(\G)$ implies that in $L^\infty(\G)$, by the pointwise convergence almost everywhere of  $v_n$ to $\bar v$ and the Dominated Convergence Theorem we also have 
				\begin{equation}
					\label{eq:convnorm}
					\|v_n\|_{p_n}\to\|\bar v\|_{\bar p}\,,
				\end{equation}
				and thus $\bar v\not\equiv 0$ on $\G$. Moreover, if for instance $\bar v\geq0$ on $\G$, since $\bar v$ solves \eqref{eq:nlsnomass} with $p=\bar p$, then as usual $v>0$ on $\G$ and $\lambda>0$. However, this would then imply
				\[
				\liminf_{n\to\infty}\|v_n^-\|_{p_n}^{p_n}\geq\liminf_{n\to\infty}\frac{2p_n}{p_n-2}\JJ_{p_n,\G}(\lambda)=\frac{2\bar p}{\bar p-2}\JJ_{\bar p,\G}(\lambda)>0
				\]
				by Proposition \ref{prop:AGS} and the continuity in $p$ of $\JJ_{p,\G}$ (see e.g. \cite[Remark 2.3]{D25}). Since this contradicts the assumption that $\bar v^-\equiv0$, it follows that $\bar v$ changes sign on $\G$, i.e. $\bar v\in\NN_{\bar p,\lambda}^{nod}(\G)$. Hence, $\bar v^\pm\not\equiv0$ and arguing as for $v_n$ we also have
				\[
				\|v_n^\pm\|_{p_n}\to\|\bar v^\pm\|_{\bar p}\qquad\text{as }n\to\infty\,,
				\]
				that combined with the strong convergence in $L^{\bar p}(\G)$ gives 
				\[
				n_{\bar p,\lambda}(v_n^\pm)\to1\qquad\text{as }n\to\infty\,.
				\]
				Therefore,  
				\begin{equation*}
					\begin{aligned}
						\mathcal{J}_{\bar{p},\mathcal{G}}^{nod}(\lambda) &\leq J_{\bar{p},\lambda} \big( n_{\bar{p},\lambda}(v_n^+) v_{n}^+ + n_{\bar{p},\lambda}(v_n^-) v_{n}^- \big) =  \left(\frac{1}{2}-\frac{1}{\bar{p}} \right)  \left(n_{\bar{p},\lambda}^{\bar{p}}(v_n^+) \|v_n^+\|_{\bar{p}}^{\bar{p}} + n_{\bar{p},\lambda}^{\bar{p}}(v_n^-) \|v_n^-\|_{\bar{p}}^{\bar{p}}\right)    \\ 
						& =  \left(\frac{1}{2}-\frac{1}{p_n} \right) \left(  \|v_n^+\|_{p_n}^{p_n} + \|v_n^-\|_{p_n}^{p_n}\right)+o_{n}(1) = J_{p_n,\lambda}(v_{n}^{+}, \mathcal{G})+ J_{p_n,\lambda}(v_{n}^{-}, \mathcal{G}) + o_{n}(1) \\
						& = J_{p_n,\lambda}(v_{n}, \mathcal{G})+o_{n}(1)  = \mathcal{J}_{p_n,\mathcal{G}}^{nod}(\lambda) + o_{n}(1)\,,
					\end{aligned}
				\end{equation*}
				that is 
				\[
				\JJ_{\bar p,\G}^{nod}(\lambda)\leq\liminf_{n\to\infty}\JJ_{p_n,\G}^{nod}(\lambda)\,,
				\]
				that together with \eqref{eq:limsupJ} leads to
				\[
				\lim_{p\to\bar p}\JJ_{p,\G}^{nod}(\lambda)=\JJ_{\bar p,\G}^{nod}(\lambda)\,.
				\]
				Observe also that the previous argument shows that, up to subsequences, nodal action ground states $v_n\in\NN_{p_n,\lambda}^{nod}(\G)$ converge to nodal action ground states $\bar v\in\NN_{\bar p,\lambda}^{nod}(\G)$ if $p_n\to\bar p$ as $n\to\infty$, weakly in $H^1(\G)$ and strongly in $L^q(\G)$ for every $q\geq2$. Recalling the definition of $Q_{p,\G}^{nod,\pm}$ and Proposition \ref{prop:NAGS}, this yields \eqref{eq:ptopbar}.
				
				\smallskip
				{\em Step 2: proof of \eqref{eq:Qp=6}.} We want to prove that there exist $\overline\lambda_2>\overline\lambda_1>-\lambda_2(\G)$ for which \eqref{eq:Qp=6} holds. To this end, recall that, under the hypotheses of Theorem \ref{thm:nonun}, Theorem \ref{thm:nodex} ensures the existence of a least energy nodal normalized solution $\overline u\in\NN_{6,\overline\lambda}^{nod}(\G)$ with mass $\mu_\G^{nod}$ and such that $E_6(\overline u,\G)<0$, which is also a nodal action ground state in $\NN_{6,\overline\lambda}^{nod}(\G)$, for some $\overline\lambda>-\lambda_2(\G)$. Moreover, every nodal action ground state in $\NN_{6,\overline\lambda}^{nod}(\G)$ has mass $\mu_\G^{nod}$, so that $Q_{6,\G}^{nod,-}(\overline\lambda)=Q_{6,\G}^{nod,+}(\overline\lambda)=\mu_\G^{nod}$. Furthermore, by Proposition \ref{prop:as}, we also have that $\displaystyle\lim_{\lambda\to+\infty}Q_{6,\G}^{nod,\pm}(\lambda)=\mu_\G^{nod}$. Hence, to prove the existence of the desired $\overline\lambda_1, \overline\lambda_2$ it is enough to show that there exists $\widetilde\lambda>\overline\lambda$ such that $Q_{6,\G}^{nod,-}(\widetilde\lambda)=Q_{6,\G}^{nod,+}(\widetilde\lambda)\neq\mu_\G^{nod}$. 
				
				We argue by contradiction and assume that $Q_{6,\G}^{nod,\pm}(\lambda)=\mu_\G^{nod}$ for every $\lambda\in[\overline\lambda,+\infty)\setminus \widetilde{Z}_{6,\G}^{nod}$ (recall that $Z_{6,\G}^{nod}$ is the set defined in Proposition \ref{prop:NAGS}). By Proposition \ref{prop:NAGS}, we then have
				\[
				\JJ_{6,\G}^{nod}(\lambda)=\JJ_{6,\G}^{nod}(\overline\lambda)+\int_{\overline\lambda}^{\lambda}(\JJ_{6,\G}^{nod})'(s)\,ds=\JJ_{6,\G}^{nod}(\overline\lambda)+\frac{\mu_\G^{nod}}2(\lambda-\overline\lambda)\qquad\forall \lambda\in[\overline\lambda,+\infty)\setminus \widetilde{Z}_{6,\G}^{nod}\,,
				\]
				and thus everywhere on $[\overline\lambda,+\infty)$ by continuity of $\JJ_{6,\G}^{nod}$. Note that
				\[
				\JJ_{6,\G}^{nod}(\overline\lambda)=J_{6,\overline\lambda}(\overline u,\G)=E_6(\overline u,\G)+\frac{\overline\lambda\mu_\G^{nod}}2\,,
				\]
				which plugged into the previous identity gives
				\[
				\JJ_{6,\G}^{nod}(\lambda)-\frac{\lambda\mu_\G^{nod}}2=E_6(\overline u,\G)\qquad\forall\lambda\geq\overline\lambda\,.
				\]
				But this is impossible, since Proposition \ref{prop:as} ensures that the left hand side of the previous identity goes to zero as $\lambda\to+\infty$, while the right hand side is strictly negative and independent of $\lambda$. This provides the contradiction we seek and proves \eqref{eq:Qp=6}. 
				
				\smallskip
				{\em Step 3: conclusion of the proof. } Let $\overline\lambda_1,\overline\lambda_2$ be such that \eqref{eq:Qp=6} holds, and let $\sigma:=Q_{6,\G}^{nod,-}(\overline\lambda_1)-Q_{6,\G}^{nod,+}(\overline\lambda_2)>0$. By Step 1, there exists $\varepsilon>0$ such that
				\begin{equation}
				\label{eq:Qp<6}
				\inf_{p\in(6-\varepsilon,6)}Q_{p,\G}^{nod,-}(\overline\lambda_1)\geq Q_{6,\G}^{nod,-}(\overline\lambda_1)-\frac{\sigma}{3}>Q_{6,\G}^{nod,+}(\overline\lambda_2)+\frac\sigma3\geq\sup_{p\in(6-\varepsilon,6)}Q_{p,\G}^{nod,+}(\overline\lambda_2)\,.
				\end{equation}
				We now prove that for every $p\in(6-\varepsilon,6)$ the set $Z_{p,\G}^{nod}$ defined in Proposition \ref{prop:Lpm} is not empty. This will complete the proof of Theorem \ref{thm:nonun}, because for every $\mu\in Z_{p,\G}^{nod}$ there always exist two least energy nodal normalized solutions $u_1,u_2\in\mathfrak{E}_{p,\G}^{nod}(\mu)$ with mass $\mu$ and such that $\LL_p(u_1,\G)=\Lambda_{p,\G}^{nod,-}(\mu)\neq\Lambda_{p,\G}^{nod,+}(\mu)=\LL_p(u_2,\G)$. To prove this latter fact note that, if $\mu\in Z_{p,\G}^{nod}$, then $\displaystyle\lim_{\nu\to\mu^-}\Lambda_{p,\G}^{nod,-}(\nu)<\lim_{\nu\to\mu^+}\Lambda_{p,\G}^{nod,+}(\nu)$. By Remark \ref{rem:Lnodatt}, as $\nu\to\mu^-$ any sequence of least energy nodal normalized solutions $u_\nu\in\mathfrak{E}_{p,\G}^{nod}(\nu)$ such that $\LL_p(u_\nu,\G)=\Lambda_{p,\G}^{nod,-}(\nu)$ converges (up to subsequences) weakly in $H^1(\G)$ and strongly in $L^q(\G)$, $q\geq2$, to a least energy nodal normalized solution $u_\mu\in\mathfrak{E}_{p,\G}^{nod}(\mu)$ solving \eqref{eq:nls} with $\displaystyle\lambda=\lim_{\nu\to\mu^-}\Lambda_{p,\G}^{nod,-}(\nu)$. Hence, $\displaystyle\Lambda_{p,\G}^{nod,-}(\mu)\leq \lim_{\nu\to\mu^-}\Lambda_{p,\G}^{nod,-}(\nu)$. Analogously, taking $\nu\to\mu^+$ and considering any sequence of least energy nodal normalized solutions $v_\nu\in\mathfrak{E}_{p,\G}^{nod}(\nu)$ with $\LL_p(v_\nu)=\Lambda_{p,\G}^{nod,+}(\nu)$, one has $\displaystyle \Lambda_{p,\G}^{nod,+}(\mu)\geq\lim_{\nu\to\mu^+}\Lambda_{p,\G}^{nod,+}(\nu)$. Therefore, $\Lambda_{p,\G}^{nod,-}(\mu)\neq\Lambda_{p,\G}^{nod,+}(\mu)$, and since they are both attained by Remark \ref{rem:Lnodatt}, we obtain the two different least energy nodal normalized solutions with mass $\mu$ we are seeking.
				
				To show that $Z_{p,\G}^{nod}\neq\emptyset$ for every $p\in(6-\varepsilon,6)$, assume by contradiction that there exists $\overline p\in(6-\varepsilon,6)$ such that $Z_{\overline p,\G}^{nod}=\emptyset$. Then, by Proposition \ref{prop:Lpm}, the map $\Lambda_{\overline p,\G}^{nod}:\R^+\to(-\lambda_2,+\infty)$ is well-defined, strictly increasing and surjective onto $(-\lambda_2(\G),+\infty)$. Therefore, for every $\lambda>-\lambda_2(\G)$ there exists a unique $\mu_\lambda>0$ and a least energy nodal normalized solution $u\in\mathfrak{E}_{\overline p,\G}^{nod}(\mu_\lambda)$ such that $\LL_{\overline p}(u,\G)=\lambda$. By Theorem \ref{thm:nodex}, every nodal action ground state in $\NN_{\overline p,\lambda}^{nod}(\G)$ has then mass $\mu_\lambda$. In particular, $Q_{\overline p,\G}^{nod,\pm}(\overline\lambda_1)=\mu_{\overline\lambda_1}$ and $Q_{\overline p, \G}^{nod,\pm}(\overline\lambda_2)=\mu_{\overline\lambda_2}$. On the one hand, since $\overline\lambda_2>\overline\lambda_1$, the monotonicity of $\Lambda_{\overline p,\G}^{nod}$ implies $\mu_{\overline\lambda_2}>\mu_{\overline\lambda_1}$.  On the other hand, \eqref{eq:Qp<6} yields $\mu_{\overline\lambda_1}>\mu_{\overline\lambda_2}$. As this gives a contradiction, we conclude.
			\end{proof}
			
			\section{On the hypotheses of Theorem \ref{thm:nonun}}
			\label{sec:hyp}
			 In this section we discuss the actual applicability of the conditional non-uniqueness result of Theorem \ref{thm:nonun}. Precisely, we first prove Theorem \ref{thm:D}, showing that every graph containing a symmetric dumbbell subgraph with sufficiently long bridging edges always displays non-uniqueness of least energy nodal normalized solutions at some mass, for $p$ close to $6$. Then, we show that both on graphs with at least two pendants and on ones admitting cycle coverings there exists no $L^2$-critical sign-changing solution with negative energy at the threshold mass $\mu_\G^{nod}$, thus preventing the application of Theorem \ref{thm:nonun} to these classes of graphs.
			 
			 We begin by proving the non-uniqueness result of Theorem \ref{thm:D}. To recall the notation set in the Introduction, let $\D_\ell$ be the dumbbell graph obtained by gluing together two identical copies $\G_1,\G_2$ of a given compact graph with no pendant, joined by an additional edge of total length $2\ell$ with one endpoint at a vertex of $\G_1$ and the other endpoint at the corresponding vertex of $\G_2$. A compact graph $\G$ is then said to contain a copy of $\D_\ell$ if $\G$ is as in Figure \ref{fig:Dl}, namely $\G$ is given by a copy of $\D_\ell$ and a further compact subgraph with no pendant, attached to $\D_\ell$ at the middle point of the edge of length $2\ell$.
			 
			 \begin{proof}[Proof of Theorem \ref{thm:D}]
			 	
			 	Since $\G$ contains a copy of $\D_\ell$, there exist two edges $e_1,e_2$, each of length $\ell$, such that $\G$ contains a copy of $\G_1$ attached to $e_1$ and a copy of $\G_2$ attached to $e_2$. For simplicity, from now on we write $\G_1,\G_2$ to denote their copies in $\G$. Moreover, $\widetilde\G_1:=\G_1\cup e_1$ and $\widetilde\G_2:=\G_2\cup e_2$ are two identical copies of the same graph, and $e_1$ and $e_2$ are incident at the same vertex $\overline\vv$ of $\G\setminus\left(\G_1\cup\G_2\right)$. 
			 	
			 	In view of Theorem \ref{thm:nonun}, to prove Theorem \ref{thm:D} we simply have to show that $\EE_{6,\G}^{nod}(\mu_\G^{nod})<0$ whenever $\ell$ is large enough. We do this by constructing explicitly a sign-changing solution to \eqref{eq:nls} on $\G$ with mass $\mu_\G^{nod}$ and strictly negative energy. Note that, since by assumption $\G$ has no pendant, then $\mu_\G^{nod}=2\mu_\R$ by \eqref{eq:muGnod}. 
			 	
			 	Consider the following minimization problem
			 	\[
			 	\widetilde\EE_{6,\widetilde\G_1}(\mu_\R):=\inf\left\{E_{6}(u,\widetilde\G_1)\,:\,u\in H_{\mu_\R}^1(\widetilde\G_1)\,,\,\, u(\overline\vv)=0\right\}\,,
			 	\] 
			 	that is we minimize the $L^2$-critical energy $E_6$ on $\widetilde\G_1$ among functions with mass $\mu_\R$ that vanish at the final vertex of the edge $e_1$. Clearly, any such function can be thought of as a compactly supported function on the non-compact graph $\overline\G_1$ obtained replacing the edge $e_1$ in $\widetilde\G_1$ with a half-line. Since $\overline\G_1$ is a graph with a unique half-line, by \cite[Theorem 3.3]{ASTcmp} one has $-\infty<\EE_{6,\overline\G_1}(\mu_\R)<0$ and it is attained. Hence, for every $\ell>0$
			 	\[
			 	\widetilde\EE_{6,\widetilde\G_1}(\mu_\R)\geq\EE_{6,\overline\G_1}(\mu_\R)>-\infty\,.
			 	\]
			 	Moreover, if $u\in H_{\mu_\R}^1(\overline\G_1)$ is such that $E_6(u,\overline\G_1)=\EE_{6,\overline\G_1}(\mu_\R)$, it is easily seen that the $H^1$-norm of the restriction of $u$ to the subset $[\ell,+\infty)$ of the half-line of $\overline\G_1$ tends to zero as $\ell\to+\infty$. This follows e.g. by noting that $u$ solves \eqref{eq:nlsnomass} on $\overline\G_1$ for some $\lambda>0$, so its restriction to the half-line of $\overline\G_1$ coincides with the restriction to $\R^+$ of the soliton $\displaystyle\phi_\lambda(x)=\sqrt{\sqrt{3\lambda}\text{\normalfont sech}(2\sqrt\lambda|x-x_0|)}$, for some $x_0\in\R$. Clearly, $\phi_\lambda$ decays exponentially fast as $|x|\to+\infty$, and so does its first derivative. Hence, for sufficiently large $\ell$ we can approximate $u$ with a function $w\in H_{\mu_{\R}}^1(\widetilde\G_1)$ 
such that $\widetilde w(\overline\vv)=0$ and $E_6(w,\widetilde\G_1)<E_{6}(u,\overline\G_1)/2<0$. This yields $\widetilde\EE_{6,\widetilde{\G}_1}(\mu_\R)<0$, which guarantees that $\widetilde\EE_{6,\widetilde{\G}_1}(\mu_\R)$ is attained by some function $\widetilde u$ with mass $\mu_\R$ and such that $\widetilde u(\overline\vv)=0$. The fact that the strict negativity of the  ground state energy level is sufficient to guarantee existence of ground states on compact graphs even at the threshold mass is standard, and a detailed proof of it that extends to our setting here with no modification can be found in \cite[Section 5]{D18}.
 			 	
 			 	Since $E_6(\widetilde{u},\widetilde\G_1)=\widetilde\EE_{6,\widetilde{\G}_1}(\mu_\R)$, there exists $\lambda\in\R$ such that
 			 	\begin{equation}
 			 		\label{eq:nlsutilde}
 			 	\begin{cases}
 			 		\widetilde u''+|\widetilde u|^4\widetilde u=\lambda\widetilde u & \text{on each edge }e\in\mathbb{E}_{\widetilde\G_1}\\
 			 		\widetilde{u}\text{ is continuous} & \text{on }\widetilde\G_1\\
 			 		\sum_{e\succ\vv}\widetilde u_e'(\vv)=0 & \text{at every vertex }\vv\in\mathbb{V}_{\widetilde\G_1}\setminus\left\{\overline\vv\right\}\\
 			 		\widetilde{u}(\overline\vv)=0\,. & 
 			 	\end{cases}
 			 	\end{equation}
			 	Moreover, with no loss of generality we can further assume $\widetilde u\geq0$ on $\widetilde\G_1$. Now, since $\widetilde\G_1$ and $\widetilde\G_2$ are identical copies of the same graph and they share the vertex $\overline\vv$, let $\overline u:\G\to\R$ be defined by
			 	\[
			 	\overline u(x):=\begin{cases}
			 		\widetilde u(x) & \text{if }x\in\widetilde\G_1\\
			 		-\widetilde u(x) & \text{if }x\in\widetilde\G_2\\
			 		0 & \text{elsewhere on }\G\,.
			 	\end{cases}
			 	\]
			 	Then $\overline u\in H_{2\mu_\R}^1(\G)$ and it changes sign on $\G$. Moreover, by \eqref{eq:nlsutilde} and the fact that
			 	\[
			 	\sum_{e\succ\overline\vv}\overline u_e'(\overline\vv)=\overline u_{e_1}'(\overline\vv)+\overline u_{e_2}'(\overline\vv)=-\widetilde{u}'(\ell)+\widetilde{u}'(\ell)=0
			 	\]
			 	(where we identified $\overline\vv$ with $x_{e_1}=\ell$ on $\widetilde\G_1$ and with $x_{e_2}=\ell$ on $\widetilde\G_2$), we obtain that $\overline u$ solves \eqref{eq:nls} on $\G$ with mass $2\mu_\R$. Since by construction $E_6(\overline u,\G)=2E_6(\widetilde u,\widetilde\G_1)<0$, we conclude. 
			 \end{proof}
			 \begin{remark}
			 	The proof of Theorem \ref{thm:D} makes essential use of the fact that $\G_1$ and $\G_2$ are identical copies of the same graph. Indeed, if this assumption is removed, the minimization problems $\widetilde\EE_{6,\widetilde\G_1}(\mu_\R)$, $\widetilde{\EE}_{6,\widetilde\G_2}(\mu_\R)$ with the additional homogeneous Dirichlet condition at the vertex $\overline\vv$ will be genuinely different. In particular, it will be no longer guaranteed that a sign-changing function obtained gluing together two solutions of those problems solves \eqref{eq:nls} on $\G$. 
			 \end{remark}
			 
			 If Theorem \ref{thm:D} exhibits a class of graphs where the assumptions of Theorem \ref{thm:nonun} are satisfied, the next proposition goes in the opposite direction.
			 
			 \begin{proposition}
			 	\label{prop:E>0}
			 	If $\G$ has at least two pendants or admits a cycle covering, then $\EE_{6,\G}^{nod}(\mu_\G^{nod})\geq0$. 
			 \end{proposition}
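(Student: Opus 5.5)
The plan is to show that \emph{every} sign-changing solution $u$ of \eqref{eq:nls} with $p=6$ and mass $\mu_\G^{nod}$ satisfies $E_6(u,\G)\geq0$; taking the infimum then gives $\EE_{6,\G}^{nod}(\mu_\G^{nod})\geq0$. Let $\lambda=\LL_6(u,\G)$ be the multiplier of such a $u$. Then $u\in\NN_{6,\lambda}^{nod}(\G)$, so $u^\pm\in\NN_{6,\lambda}(\G)$. Writing $m_\pm:=\|u^\pm\|_2^2$, we have $m_++m_-=\mu_\G^{nod}$ and
\[
E_6(u,\G)=J_{6,\lambda}(u^+,\G)+J_{6,\lambda}(u^-,\G)-\frac{\lambda\mu_\G^{nod}}2 .
\]
Everything therefore reduces to a lower bound on $J_{6,\lambda}(u^\pm,\G)$ in terms of the action levels on the half-line or on the line.

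\emph{Case $\lambda\leq0$.} By \eqref{B-1}, $J_{6,\lambda}(u^\pm,\G)=\frac13(\|(u^\pm)'\|_2^2+\lambda m_\pm)$. Hence
\[
E_6(u,\G)=\frac13\|u'\|_2^2-\frac{\lambda\mu_\G^{nod}}{6}\geq0,
\]
and this case is done.

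\emph{Case $\lambda>0$.} I would use rearrangements, following \cite{ASTcmp}. Suppose first that $\G$ has at least two pendants, so $\mu_\G^{nod}=2\mu_{\R^+}$. Since $u$ changes sign, $u^+$ vanishes somewhere on the connected graph $\G$. Hence every level $t\in(0,\max u^+)$ has at least one preimage, and the decreasing rearrangement $(u^+)^*$ on $\R^+$ satisfies
\[
\|((u^+)^*)'\|_2\leq\|(u^+)'\|_2,
\]
with the same $L^2$ and $L^6$ norms. Since $u^+\in\NN_{6,\lambda}(\G)$, the function $t\mapsto J_{6,\lambda}(tu^+)$ is maximized at $t=1$. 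Therefore
\[
J_{6,\lambda}(u^+,\G)=\max_{t>0}J_{6,\lambda}(tu^+,\G)\geq\max_{t>0}J_{6,\lambda}(t(u^+)^*,\R^+)\geq\JJ_{6,\R^+}(\lambda)=\frac{\lambda\mu_{\R^+}}2 .
\]
The last maximum is finite and attained on $\NN_{6,\lambda}(\R^+)$ because $\lambda>0$. The same bound holds for $u^-$. Summing the two bounds gives $E_6(u,\G)\geq\lambda\mu_{\R^+}-\lambda\mu_{\R^+}=0$.

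If instead $\G$ admits a cycle covering, then $\G$ has no pendant, so $\mu_\G^{nod}=2\mu_\R$. Almost every level of $u^\pm$ now has at least two preimages, so the symmetric rearrangement on $\R$ does not increase the kinetic term. The same chain of inequalities then gives $J_{6,\lambda}(u^\pm,\G)\geq\JJ_{6,\R}(\lambda)=\lambda\mu_\R/2$, and hence again $E_6(u,\G)\geq0$.

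The main obstacle is the rearrangement step. One must check the preimage-counting property for $u^\pm$: at least one preimage per level because $u^\pm$ vanishes somewhere, respectively at least two per level because of the cycle covering. One must also check the identity $\JJ_{6,\R^+}(\lambda)=\lambda\mu_{\R^+}/2$, respectively $\JJ_{6,\R}(\lambda)=\lambda\mu_\R/2$; this was already used in the proof of Proposition \ref{prop:as}. The key point is that bounding the actions rather than the energies of $u^\pm$ avoids any issue with an uneven split $m_+\neq m_-$: the common multiplier $\lambda$ absorbs it.
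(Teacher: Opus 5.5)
Your proof is correct and is essentially the paper's argument. The paper argues by contradiction: it uses $E_6(u,\G)<0$ to force $\lambda=\LL_6(u,\G)>0$, and your case $\lambda\le0$ is just the contrapositive of that step. It then compares $J_{6,\lambda}(u^\pm,\G)$ with $\JJ_{6,\R^+}(\lambda)=\lambda\mu_{\R^+}/2$ (resp.\ $\JJ_{6,\R}(\lambda)=\lambda\mu_\R/2$) via monotone (resp.\ symmetric) rearrangements exactly as you do, phrasing it through $n_{6,\lambda}((u^\pm)^*)\le1$ rather than the fibering maximum, which is only a cosmetic difference.
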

			 \begin{proof}
			 	Assume first that $\G$ has at least two pendants, so that $\mu_\G^{nod}=2\mu_{\R^+}$ by \eqref{eq:muGnod}. 
			 	Assume by contradiction that there exists a sign-changing $u\in H_{2\mu_{\R^+}}^1(\G)$ solving \eqref{eq:nls} for some $\lambda\in\R$ and such that $E_6(u,\G)<0$. Then $u\in\NN_{6,\lambda}^{nod}(\G)$, and therefore
			 	\begin{equation}
			 	\label{eq:l>0}
			 	\lambda=\LL_6(u,\G)= \frac{\frac13\|u\|_6^6-E_6(u,\G)}{\mu_{\R^+}}>0\,.
			 	\end{equation}
			 	Furthermore, since $u^\pm$ vanish somewhere on $\G$, every value in $[0,\|u^\pm\|_\infty]$ is attained at least once. Hence, denoting by $(u^\pm)^*$ the monotone rearrangements on $\R^+$ of $u^\pm$, the standard theory of rearrangements (see e.g. \cite[Section 3]{AST}) ensures that $(u^\pm)^*\in H^1(\R^+)$ and
			 	\begin{equation}
			 		\label{eq:rearr}
			 	\|((u^\pm)^*)'\|_{L^2(\R^+)}\leq\|(u^\pm)'\|_2\,,\qquad\|(u^\pm)^*\|_{L^q(\R^+)}=\|u^\pm\|_{q}\quad\forall q\geq2\,.
			 	\end{equation}
			 	Since $u^\pm\in\NN_{6,\lambda}(\G)$, this then entails
			 	\[
			 	n_{6,\lambda}^4((u^\pm)^*)=\frac{\|((u^\pm)^*)'\|_{L^2(\R^+)}^2+\lambda\|(u^\pm)^*\|_{L^2(\R^+)}^2}{\|(u^\pm)^*\|_{L^6(\R^+)}^6}\leq \frac{\|(u^\pm)'\|_{2}^2+\lambda\|u^\pm\|_{2}^2}{\|u^\pm\|_{6}^6}=1
			 	\]
			 	(note that $n_{6,\lambda}((u^\pm)^*)$ is well-defined by \eqref{eq:l>0}). Therefore, 
			 	\[
			 	\begin{split}
			 	\frac{\lambda\mu_{\R^+}}{2}=\JJ_{6,\R^+}(\lambda)&\,\leq J_{6,\lambda}\left(n_{6,\lambda}((u^\pm)^*)(u^\pm)^*,\R^+\right)=\frac13n_{6,\lambda}((u^\pm)^*)^6\|(u^\pm)^*\|_{L^6(\R^+)}^6\\
			 	&\,\leq \frac13\|u^\pm\|_6^6= J_{6,\lambda}(u^\pm,\G)=E_6(u^\pm,\G)+\frac\lambda2\|u^\pm\|_2^2\,,
			 	\end{split}
			 	\]
			 	and summing together the two inequalities gives
			 	\[
			 	\lambda\mu_{\R^+}\leq E_6(u^+,\G)+E_6(u^-,\G)+\frac\lambda2\left(\|u^+\|_2^2+\|u^-\|_2^2\right)=E_6(u,\G)+\lambda\mu_{\R^+}\,,
			 	\]
			 	contradicting the assumption $E_6(u,\G)<0$. 
			 	
			 	If  $\G$ admits a cycle covering the proof is analogous, simply replacing $\mu_{\R^+}$ with $\mu_\R$, monotone rearrangements on $\R^+$ with symmetric rearrangements on $\R$, and recalling that, since now $u^\pm$ are compactly supported functions on a graph admitting a cycle covering, then almost every value in $[0,\|u^\pm\|_\infty]$ is attained at least twice. As pointed out in \cite{ASTcmp}, this last fact ensures that the properties in \eqref{eq:rearr} hold true also for the symmetric rearrangements of $u^\pm$. 
			 \end{proof}
			
			\section{Proof of Theorem \ref{thm:uniqnod}} 
			\label{sec:uniqnod}
			
			The last two sections of the paper are devoted to a thorough analysis of least energy normalized solutions on the segment $[0,1]$. In this section, we focus on the nodal setting, proving uniqueness of least energy nodal normalized solutions on $[0,1]$ for every mass $\mu>0$ and every $p\in(2,6)$, namely Theorem \ref{thm:uniqnod}. 
			
			Existence of least energy nodal normalized solutions is guaranteed by Theorem \ref{thm:nodex}, which also shows that every such solution is a nodal action ground state in $\NN_{p,\lambda}^{nod}(0,1)$, for some $\lambda>-\lambda_2(0,1)=-\pi^2$. Moreover, since nodal action ground states are solutions of \eqref{eq:main} with exactly two nodal regions by Remark \ref{rem:2nod}, it is straightforward to see that, up to a change of sign, they are non-increasing on $[0,1]$. Hence, to prove Theorem \ref{thm:uniqnod} it is enough to show that, for every $p\in(2,6)$ and every $\mu>0$, there exists a unique $\lambda>-\pi^2$ and a unique non-increasing, sign-changing solution to \eqref{eq:main}. We do this by showing that there exists a unique such solution for every $\lambda>-\pi^2$ and that the mass of such solution is a strictly increasing function of $\lambda$. 
			
			\begin{proposition}\label{thm:main}
				For every $p\in(2,6)$, the following holds.
				\begin{enumerate}[label=\textup{(\roman*)}]
					\item Problem \eqref{eq:main} admits exactly one sign-changing, non-increasing solution $u_\lambda$ for every $\lambda>-\pi^2$, and no such solution for every $\lambda\leq-\pi^2$.
					\item The quantity
					\[
					M_p(\lambda):=\int_0^1 u_\lambda(x)^2\,dx
					\]
					is a strictly increasing function of $\lambda$ on $(-\pi^2,+\infty)$.
				\end{enumerate}
			\end{proposition}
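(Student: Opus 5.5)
The plan is a phase-plane analysis using the conserved energy of the ODE in \eqref{eq:main}, followed by a reduction to an explicit one-variable integral in which monotonicity in both the amplitude and $\lambda$ can be read off.

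\emph{Setting up the period function.} Write the equation as $u''=-V'(u)$ with $V(u)=-\frac\lambda2u^2+\frac1p|u|^p$, so that $\frac12(u')^2+V(u)$ is constant along solutions. Let $u$ be a sign-changing, non-increasing solution of \eqref{eq:main}. Then $u(0)=a>0$, $u'(0)=0$, and $u(1)=-b<0$, $u'(1)=0$, and $u$ traces half of a closed orbit winding around the origin. Since $V$ is even, $V(a)=V(-b)$ together with monotonicity of $V$ on the relevant ranges forces $b=a$. By uniqueness for the Cauchy problem, $u$ is then odd with respect to $x=1/2$. So the problem is equivalent to finding $a>0$ such that the time $T_\lambda(a)$ needed to go from $(a,0)$ to the axis $u=0$ equals $1/2$.

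Substituting $u=as$ gives the explicit formula
\[
T_\lambda(a)=\int_0^1\frac{ds}{\sqrt{R(s)}},\qquad R(s)=R_{\lambda,a}(s):=\tfrac2p a^{p-2}(1-s^p)-\lambda(1-s^2).
\]
Here $a$ ranges over the admissible set where $R>0$ on $[0,1)$: all $a>0$ if $\lambda\le0$, and $a>(p\lambda/2)^{1/(p-2)}$ if $\lambda>0$, which is exactly the condition that the orbit encloses all three equilibria.

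\emph{Part (i).} From the formula, $R$ is strictly increasing in $a$ pointwise, so $T_\lambda$ is strictly decreasing in $a$. Moreover $T_\lambda(a)\to0$ as $a\to\infty$, since $p>2$. At the lower end of the admissible set there are three cases:
\begin{itemize}
\item for $\lambda<0$, $T_\lambda(a)\to\int_0^1(-\lambda(1-s^2))^{-1/2}ds=\pi/(2\sqrt{-\lambda})$;
\item for $\lambda=0$, $T_0(a)\to+\infty$ by the pure-power scaling;
\item for $\lambda>0$, $T_\lambda(a)\to+\infty$ by the homoclinic limit, since $R$ develops a double zero at $s=0$.
\end{itemize}
Hence the range of $T_\lambda$ is $(0,\pi/(2\sqrt{-\lambda}))$ or $(0,\infty)$. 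The value $1/2$ is attained, exactly once, if and only if $\lambda>-\pi^2$, which gives a unique amplitude $a(\lambda)$. The implicit function theorem makes $a(\lambda)$ smooth. Since $R$ is decreasing in $\lambda$, $T$ is increasing in $\lambda$, and therefore $a'(\lambda)>0$.

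\emph{Part (ii), the main obstacle.} In the variable $s$ one has $dx=ds/\sqrt R$, so
\[
M_p(\lambda)=2a^2\int_0^1\frac{s^2}{\sqrt R}\,ds .
\]
Two effects compete here. The factor $a^2$ increases with $\lambda$. However, differentiating the constraint $\int_0^1R^{-1/2}ds=1/2$ shows that
\[
\dot R=\tfrac{2(p-2)}p a^{p-3}a'(1-s^p)-(1-s^2)
\]
satisfies $\int_0^1\dot R\,R^{-3/2}\,ds=0$. Because $(1-s^p)/(1-s^2)$ is increasing in $s$, $\dot R$ changes sign exactly once, from negative to positive. This shifts weight toward small $s$, so the term $-a^2\int_0^1s^2\dot R R^{-3/2}\,ds$ is negative.

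To handle this, I will eliminate $a'$ through the constraint, writing $A=\int_0^1(1-s^p)R^{-3/2}ds$ and $B=\int_0^1(1-s^2)R^{-3/2}ds$. I will also use the identity $\frac2pa^{p-2}(1-s^p)=R+\lambda(1-s^2)$ to rewrite $A$ in terms of $R^{-1/2}$ and $(1-s^2)R^{-3/2}$. This reduces $M_p'(\lambda)>0$ to an inequality among weighted moments of $R^{-1/2}$ and $R^{-3/2}$. I would try to prove that inequality with a Chebyshev/Cauchy--Schwarz argument that exploits the single sign change of $\dot R$ and the restriction $p<6$; I expect $p<6$ to be exactly where the factor $(p-2)/4$ from $a^2$ beats the redistribution term.

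If this direct route fails, the fallback is to differentiate in $\lambda$ at the PDE level. Setting $v=\partial_\lambda u_\lambda$, which is odd about $1/2$, it solves $v''+(p-1)|u|^{p-2}v-\lambda v=u$ with Neumann conditions, and $M_p'=2\int_0^1uv$. One would then show $\int_0^1uv>0$ using the Nehari/Pohozaev-type identities on $[0,1/2]$ together with $p<6$.
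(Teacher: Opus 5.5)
\textbf{Part (i).} This part is fine. You fix $\lambda$ and use that the quarter-period $T_\lambda(a)=\int_0^1R^{-1/2}\,ds$ is strictly decreasing in $a$, with the three endpoint limits you list. That is a legitimate alternative to the paper's route, which parametrizes by $\xi=\lambda a^{2-p}$ and shows that $\xi\mapsto\lambda_p(\xi)=4\xi I_0^{(p)}(\xi)^2$ is a strictly increasing bijection onto $(-\pi^2,+\infty)$. Two steps you assert should still be justified, as the paper does via the first integral:
\begin{itemize}
\item a non-increasing nodal solution is strictly decreasing on $(0,1)$;
\item $a^{p-2},b^{p-2}>\lambda$ when $\lambda>0$, which you need before invoking monotonicity of $V$ to get $b=a$.
\end{itemize}

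\textbf{Part (ii): the gap.} Part (ii) is the real content of the proposition, and you never prove $M_p'>0$. Both the Chebyshev/Cauchy--Schwarz plan and the Pohozaev fallback are only announced. The single sign change of $\dot R$ correctly shows that $-a^2\int_0^1 s^2\dot R R^{-3/2}\,ds<0$, but that only confirms the competing term has the bad sign and gives no bound on it.

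What you are missing is that no inequality is needed at all; the algebra you outline closes exactly. Set
\[
J_0=\int_0^1R^{-1/2}\,ds=\tfrac12,\quad J_2=\int_0^1 s^2R^{-1/2}\,ds,\quad A_2=\int_0^1 s^2(1-s^p)R^{-3/2}\,ds,\quad B_2=\int_0^1s^2(1-s^2)R^{-3/2}\,ds .
\]
Then $M_p'=\frac{2p}{p-2}a^{4-p}c\,J_2-a^2(cA_2-B_2)$ with $c=B/A$. Substituting $1-s^p=\frac p2a^{2-p}\bigl(R+\lambda(1-s^2)\bigr)$ into both $A$ and $A_2$, the $\lambda B B_2$ terms cancel and
\[
M_p'(\lambda)=\frac{a^2}{J_0+\lambda B}\Bigl(\frac{6-p}{p-2}\,B\,J_2+J_0\,B_2\Bigr),\qquad J_0+\lambda B=\tfrac2p\,a^{p-2}A>0 .
\]
This is positive precisely because $p<6$.

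The paper obtains the same conclusion more transparently with the scaling variable $\xi=\lambda a^{2-p}$. Under it $R=a^{p-2}D_{p,\xi}$, and the constraint becomes $a^{p-2}=4I_0^{(p)}(\xi)^2$. The mass is then
\[
M_p=2\bigl(2I_0^{(p)}(\xi)\bigr)^{\frac{6-p}{p-2}}I_2^{(p)}(\xi),
\]
a product of functions visibly increasing in $\xi$, since $\partial_\xi D_{p,\xi}^{-1/2}=\frac12(1-s^2)D_{p,\xi}^{-3/2}>0$. Combined with the monotonicity of $\lambda_p(\xi)$, this gives (ii). The competition you describe between $a^2$ and the redistribution of weight in $\int s^2R^{-1/2}$ is an artifact of the fixed-$\lambda$ parametrization. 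It hides the pure scaling factor $a^{-(p-2)/2}$ inside that integral, and once that factor is pulled out the two effects never compete.
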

 			We begin with some preliminary considerations. Let $u$ be a sign-changing, monotone non-increasing solution of \eqref{eq:main}, so that
 			\[
 			u(0)=a>0,\qquad u(1)=-b<0
 			\]
 			for some $a,b>0$. Multiplying the equation by $u'$ shows that there exists $C\in\R$ such that 
 			\begin{equation}\label{eq:first_integral}
 				\frac12(u')^2-\frac\lambda2u^2+\frac1p|u|^p=C\qquad\text{on }[0,1],
 			\end{equation}
 			that, setting 
 			\begin{equation}\label{eq:W}
 				W_\lambda(r):=-\frac\lambda2r^2+\frac1p |r|^p\qquad\forall r\in\R
 			\end{equation}
 			can be rewritten as
 			\[
 			\frac12(u')^2+W_\lambda(u)=C\qquad\text{on }[0,1].
 			\]
 			This immediately shows that $u$ is strictly decreasing in $(0,1)$ and $b=a$. Indeed, if $u'$ vanished at an interior point $\bar x$, then $u'$ would have a local maximum at $\bar x$ and $u''(\bar x)=0$. By \eqref{eq:main}, we would then have $|u(\bar x)|^{p-2}=\lambda$. If $\lambda\le0$, this is clearly impossible. If $\lambda>0$, this is again impossible, because $\pm\lambda^{1/(p-2)}$ are the global minimum points of $W_\lambda$, so that $W_\lambda(u(\bar x))=W_\lambda(\pm\lambda^{1/(p-2)})$ forces $u'\equiv0$ on $[0,1]$ by \eqref{eq:first_integral}, contradicting the fact that $u$ is sign-changing. Hence, $u'<0$ on $(0,1)$. We now show that $a=b$.  Since $u'(0)=u'(1)=0$, identity \eqref{eq:first_integral} gives
 			\begin{equation}\label{eq:equal_endpoint_energy}
 				W_\lambda(a)=W_\lambda(b).
 			\end{equation}
 			If $\lambda\le0$, then $W_\lambda$ is strictly increasing on $[0,+\infty)$, so \eqref{eq:equal_endpoint_energy} immediately implies $a=b$. If $\lambda>0$, since the strict monotonicity of $u$ gives $u''(0)<0, u''(1)>0$, by \eqref{eq:main} one has
 			\[
 			u''(0)=\lambda a-a^{p-1}<0,
 			\qquad
 			u''(1)=-\lambda b+b^{p-1}>0,
 			\]
 			that is
 			\[
 			a^{p-2}>\lambda,
 			\qquad
 			b^{p-2}>\lambda.
 			\]
 			Since $W_\lambda$ is strictly increasing on $(\lambda^{1/(p-2)},+\infty)$, by \eqref{eq:equal_endpoint_energy} we obtain again $a=b$. Observe that when $\lambda>0$ the strict monotonicity of $u$ and the fact that it changes sign yield also $a^{p-2}>\frac p2\lambda$.
 		
 			Since $v(x):=-u(1-x)$ solves the same Cauchy problem for \eqref{eq:main} at $x=0$ as $u$, by uniqueness $u(x)=-u(1-x)$, so that in particular $u(1/2)=0$.
 			Since $u$ is positive and decreasing on $[0,1/2)$, by \eqref{eq:first_integral} and $u(0)=a, u'(0)=0$ we have
 			\begin{equation}\label{eq:up_negative}
 				u'=-\sqrt{\frac{2}{p}(a^p-u^p)-\lambda(a^2-u^2)}\qquad\text{on }[0,1/2]
 			\end{equation}
 			(note that the square root is well-defined on $[0,1/2]$ because $\frac2pa^p-\lambda a^2>0$ for every $\lambda$). Hence, integrating on $[0,1/2]$ and taking the change of variable $t=u(x)$ we obtain
 			\begin{equation}\label{eq:length_a_lambda}
 				\frac12=\int_0^a
 				\frac{\dd t}{\sqrt{\frac{2}{p}(a^p-t^p)-\lambda(a^2-t^2)}}.
 			\end{equation}
 			Performing the change of variables $t=as$ and setting 
 			\begin{equation}\label{eq:mu_def}
 				\xi:=\lambda a^{2-p},\quad
 				D_{p,\xi}(s):=\frac{2}{p}(1-s^p)-\xi(1-s^2) \qquad\forall s\in[0,1]\,,
 			\end{equation}
 			and
 			\begin{equation}\label{eq:I0_def}
 				I_0^{(p)}(\xi):=\int_0^1\frac{\dd s}{\sqrt{D_{p,\xi}(s)}},
 			\end{equation}
 			then \eqref{eq:length_a_lambda} becomes
 			\begin{equation}\label{eq:a_p_minus_two}
 				a^{p-2}=4\left(I_0^{(p)}(\xi)\right)^2
 			\end{equation}
 			and 
 			\begin{equation}\label{eq:lambda_mu}
 				\lambda=\lambda_p(\xi):=4\xi\left(I_0^{(p)}(\xi)\right)^2,
 				\qquad \xi\in\left(-\infty,\frac2p\right)
 			\end{equation}
 			(the range for the parameter $\xi$ is given by the non-negativity of $D_{p,\xi}$ on $[0,1]$).
 			
 			\begin{lemma}\label{lem:lambda_monotone}
 				For every $p\in(2,6)$, the map $\xi\mapsto\lambda_p(\xi)$ is strictly increasing on $(-\infty,2/p)$, and
 				\[
 				\lim_{\xi\to-\infty}\lambda_p(\xi)=-\pi^2,\qquad\lim_{\xi\to\frac2p^-}\lambda_p(\xi)=+\infty.
 				\]
 			\end{lemma}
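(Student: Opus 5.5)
The plan is to work directly with the explicit formula \eqref{eq:lambda_mu}, $\lambda_p(\xi)=4\xi\bigl(I_0^{(p)}(\xi)\bigr)^2$, and to split the range of $\xi$ according to its sign. Monotonicity and both limits will come from elementary pointwise monotonicity of $D_{p,\xi}(s)$ in $\xi$, combined with monotone convergence, rather than from any delicate analysis of the period function.

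First I check that $I_0^{(p)}$ is finite, continuous and strictly increasing on $(-\infty,2/p)$. Since $s^p\le s^2$ on $[0,1]$, we have
\[
D_{p,\xi}(s)\ \ge\ \Bigl(\tfrac2p-\xi\Bigr)(1-s^2)\qquad\forall s\in[0,1],
\]
so $I_0^{(p)}(\xi)\le \frac{\pi}{2}\bigl(\tfrac2p-\xi\bigr)^{-1/2}<\infty$. Moreover, $\xi\mapsto D_{p,\xi}(s)$ is strictly decreasing for every $s\in[0,1)$, because its $\xi$-derivative is $-(1-s^2)<0$. Hence $I_0^{(p)}$ is strictly increasing, and it is continuous by dominated convergence, using the bound above locally uniformly in $\xi$.

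Next comes monotonicity, handled on the two sides of $\xi=0$.
\begin{itemize}
\item On $(0,2/p)$, both factors $\xi$ and $\bigl(I_0^{(p)}(\xi)\bigr)^2$ are positive and strictly increasing, so $\lambda_p$ is strictly increasing there.
\item On $(-\infty,0)$, I set $\eta=-\xi>0$ and rewrite
\[
\lambda_p(\xi)=-4\left(\int_0^1\frac{ds}{\sqrt{(1-s^2)+\frac{2}{p\eta}(1-s^p)}}\right)^2 .
\]
As $\xi$ increases, $\eta$ decreases and the term $\frac{2}{p\eta}(1-s^p)$ increases. The integrand therefore strictly decreases, so the integral strictly decreases and $\lambda_p$ strictly increases.
\end{itemize}
Finally, $\lambda_p(0)=0$ and $\lambda_p$ is continuous at $0$, with $\lambda_p<0$ on the left and $\lambda_p>0$ on the right. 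These pieces glue to give strict monotonicity on the whole interval $(-\infty,2/p)$.

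For the limits:
\begin{itemize}
\item As $\xi\to-\infty$, i.e. $\eta\to+\infty$, the integrand in the last display increases to $(1-s^2)^{-1/2}$. By monotone convergence the integral tends to $\pi/2$, hence $\lambda_p(\xi)\to-4(\pi/2)^2=-\pi^2$.
\item As $\xi\to\frac2p^-$, by monotone convergence $I_0^{(p)}(\xi)\to\int_0^1 \bigl(\tfrac2p(s^2-s^p)\bigr)^{-1/2}ds$. Near $s=0$ the integrand behaves like $\sqrt{p/2}\,s^{-1}$, so this integral diverges. Since $\xi\to 2/p>0$, we get $\lambda_p\to+\infty$.
\end{itemize}

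I expect no serious obstacle. The only point needing care is justifying the limit passages at the two endpoints, and both follow from monotone convergence thanks to the pointwise monotonicity in $\xi$. Notably, the argument does not seem to use $p<6$, only $p>2$.
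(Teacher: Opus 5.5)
Your proof is correct, and it takes a genuinely different route from the paper's. The paper differentiates $\lambda_p(\xi)=4\xi\bigl(I_0^{(p)}(\xi)\bigr)^2$ under the integral sign and uses the identity $I_0^{(p)}+2\xi\bigl(I_0^{(p)}\bigr)'=\int_0^1\frac{2}{p}(1-s^p)\,D_{p,\xi}(s)^{-3/2}\,ds>0$. This gives $\lambda_p'>0$ on all of $(-\infty,2/p)$ at once, with no case distinction. The paper then gets the limit at $-\infty$ by sandwiching $D_{p,\xi}$ between $-\xi(1-s^2)$ and $-\xi(1-s^2)(1-C/\xi)$, and the limit at $2/p$ by restricting to $[\delta,1/2]$.

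You avoid differentiation altogether:
\begin{itemize}
\item on $(0,2/p)$ you use that $\lambda_p$ is a product of positive increasing factors;
\item on $(-\infty,0)$ the rescaled integrand $\bigl((1-s^2)+\frac{2}{p\eta}(1-s^p)\bigr)^{-1/2}$ turns the sign-dependent problem into a pointwise monotonicity in $\eta=-\xi$;
\item you glue the two pieces at $\xi=0$ (the sign pattern alone suffices there; continuity at $0$ is not needed);
\item both limits then follow from monotone convergence.
\end{itemize}

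What each route buys: yours is more elementary. It uses only pointwise comparisons and monotone convergence, so you never need to justify differentiating an integrand that is singular at $s=1$. The rescaled formula also makes the value $-\pi^2$ essentially immediate. The paper's computation treats both signs of $\xi$ uniformly and yields the slightly stronger conclusion $\lambda_p'>0$, so $\lambda_p$ is a $C^1$ diffeomorphism onto $(-\pi^2,+\infty)$. Your argument gives strict monotonicity and continuity, which is all the lemma and its use in Proposition \ref{thm:main} require.

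You are also right that $p<6$ plays no role here; it enters only in the mass monotonicity, through the exponent $\frac{6-p}{p-2}$.
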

 			
 			\begin{proof}
 				Differentiating \eqref{eq:lambda_mu}  gives 
 				\begin{equation}\label{eq:lambda_prime_start}
 					\lambda_p'(\xi)=4I_0^{(p)}(\xi)
 					\left(I_0^{(p)}(\xi)+2\xi \left(I_0^{(p)}\right)'(\xi)\right).
 				\end{equation}
 				By \eqref{eq:I0_def}, we have $I_0^{(p)}(\xi)>0$ and
 				\begin{equation}\label{eq:I0_prime}
 					\left(I_0^{(p)}\right)'(\xi)=\frac12\int_0^1
 					\frac{1-s^2}{D_{p,\xi}(s)^{3/2}}\dd s>0\,,
 				\end{equation}
 				so that
 				\begin{align}
 					I_0^{(p)}(\xi)+2\xi \left(I_0^{(p)}\right)'(\xi)
 					=\int_0^1
 					\frac{D_{p,\xi}(s)+\xi(1-s^2)}{D_{p,\xi}(s)^{3/2}}\dd s=\int_0^1
 					\frac{\frac2p(1-s^p)}{D_{p,\xi}(s)^{3/2}}\dd s>0\,,
 				\end{align}
 				in turn implying $\lambda_p'(\xi)>0$ on $(-\infty,2/p)$. Moreover, since there exists $C>0$ independent of $s$ and $\xi$ such that, as $\xi\to-\infty$,
 				\begin{equation}
 					\label{eq:largemu}
 					-\xi(1-s^2)\leq D_{p,\xi}(s)\leq -\xi(1-s^2)(1-C/\xi)\qquad\forall s\in[0,1],
 				\end{equation}
 				we have
 				\begin{equation*}
 					\frac1{\sqrt{1-C/\xi}\sqrt{|\xi|}}\int_0^1\frac{\dd s}{\sqrt{1-s^2}}\leq I_0^{(p)}(\xi)\leq  \frac1{\sqrt{|\xi|}}\int_0^1\frac{\dd s}{\sqrt{1-s^2}}\,,
 				\end{equation*}
 				which shows that 
 				\begin{equation}\label{eq:lambda_left_limit}
 					\lim_{\xi\to-\infty}\lambda_p(\xi)=\lim_{\xi\to-\infty}4\xi\left(I_0^{(p)}(\xi)\right)^2= -\pi^2.
 				\end{equation}
 				Take then $\xi\to\frac2p^-$. Since $D_{p,2/p}(s)=\frac2ps^2(1-s^{p-2})$, for every $M>0$ there exists $\delta>0$ such that 
 				\[
 				\int_\delta^{1/2}\frac{ds}{\sqrt{D_{p,2/p}(s)}}\geq M\,,
 				\]
 				and by the Dominated Convergence Theorem
 				\[
 				\lim_{\xi\to\frac2p^-} I_0^{(p)}(\xi)\geq\lim_{\xi\to\frac2p^-}\int_\delta^{1/2}\frac{ds}{\sqrt{D_{p,\xi}(s)}}=\int_\delta^{1/2}\frac{ds}{\sqrt{D_{p,2/p}(s)}}\geq M\,.
 				\]
 				The arbitrariness of $M$ then gives $\displaystyle\lim_{\xi\to\frac2p^-}I_0^{(p)}(\xi)=+\infty$, and the same holds for $\lambda_p(\xi)$.
 			\end{proof}
 			
 			\begin{proof}[Proof of Proposition \ref{thm:main}]
 				Since $\lambda_p$ is continuous, by Lemma \ref{lem:lambda_monotone} it maps $(-\infty,2/p)$ bijectively onto $(-\pi^2,+\infty)$. Hence, if \eqref{eq:main} admits a sign-changing, non-increasing solution, then $\lambda\in(-\pi^2,+\infty)$. Conversely, given any $\xi<2/p$, define $a>0$ by  \eqref{eq:a_p_minus_two} and let $\lambda=\lambda_p(\xi)$ be as in \eqref{eq:lambda_mu}. Solving the Cauchy problem with $u(0)=a$, $u'(0)=0$, by \eqref{eq:up_negative} and \eqref{eq:length_a_lambda} it follows that the solution vanishes at $x=1/2$. Extending it by $u(x)=-u(1-x)$ gives a $C^2$ solution on $[0,1]$ satisfying \eqref{eq:main} and $u(1)=-a$, showing that \eqref{eq:main} admits at least one sign-changing, non-increasing solution for every $\lambda>-\pi^2$. This shows that such a solution exists for \eqref{eq:main} if and only if $\lambda\in(-\pi^2,+\infty)$, and for every such $\lambda$ this solution is unique by Lemma \ref{lem:lambda_monotone}. This proves Proposition \ref{thm:main}(i).
 				
 				We now focus on Proposition \ref{thm:main}(ii). Since $u(x)=-u(1-x)$ for every $x\in[0,1]$, we have
 				\begin{equation}\label{eq:mass_half}
 					M_p(\lambda):=\int_0^1u^2\dd x=2\int_0^{1/2}u^2\dd x.
 				\end{equation}
 				On $[0,1/2]$, changing the variables $u(x)=t=as$ and combining with \eqref{eq:a_p_minus_two} we obtain
 				\begin{equation}\label{eq:mass_a_lambda}
 					M_p(\lambda_p(\xi))=2\int_0^a
 					\frac{t^2\dd t}{\sqrt{\frac2p(a^p-t^p)-\lambda_p(\xi)(a^2-t^2)}}=2\left(2I_0^{(p)}(\xi)\right)^{\frac{6-p}{p-2}}I_2^{(p)}(\xi)\,,
 				\end{equation}
 				where
 				\begin{equation}\label{eq:I2_def}
 					I_2^{(p)}(\xi):=\int_0^1
 					\frac{s^2\dd s}{\sqrt{D_{p,\xi}(s)}}\,.
 				\end{equation}
 				Since
 				\begin{equation}\label{eq:I2_prime}
 					\left(I_2^{(p)}\right)'(\xi)=\frac12\int_0^1
 					\frac{s^2(1-s^2)}{D_{p,\xi}(s)^{3/2}}\dd s>0\,,
 				\end{equation}
 				combining with \eqref{eq:I0_prime} shows that $M_p(\lambda_p(\xi))$ is the product of increasing functions for every $p\in(2,6)$, namely $M_p$ is a strictly increasing function of $\xi$. Since Lemma \ref{lem:lambda_monotone} gives the same for $\lambda_p(\xi)$, we conclude that $M_p$ is a strictly increasing function of $\lambda$ on $(-\pi^2,+\infty)$.
 			\end{proof}

 			\section{Proof of Theorem \ref{thm:uniq}}
 			\label{sec:uniqp=2}
			
			In this last section we consider energy ground states in $H_\mu^1(0,1)$ for $p$ close to $2$ and prove Theorem \ref{thm:uniq}, namely that (up to symmetries) they are unique for every mass $\mu>0$. 
			
			The proof of this result is quite long and technical, as it is based on a thorough analysis of strictly increasing solutions of the Cauchy problem 
			\begin{equation}\label{eq:main2}
				u''+u^{p-1}=\lambda u,
				\qquad u>0,\qquad u'(0)=u'(1)=0
			\end{equation}
			for $p$ sufficiently close to $2$. In particular, we will derive the following results.
			\begin{proposition}\label{prop:main}
				For every $p>2$, define
				\begin{equation}
				\label{eq:lambda*}
				\lambda_*(p):=\frac{\pi^2}{p-2}\,.
				\end{equation}
				A strictly increasing solution $u_\lambda$ to \eqref{eq:main2} exists if and only if $\lambda>\lambda_*(p)$. Up to the reflection $x\mapsto 1-x$, $u_\lambda$ is the unique strictly monotone solution of \eqref{eq:main2}.
			\end{proposition}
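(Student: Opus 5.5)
The plan is to reduce the problem to the study of the period function of a planar Hamiltonian system, and then to use its monotonicity.

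\emph{Normalization of $\lambda$ and scaling.} First, integrating \eqref{eq:main2} on $[0,1]$ and using $u'(0)=u'(1)=0$ gives $\lambda\int_0^1u=\int_0^1u^{p-1}>0$, so $\lambda>0$. Then set $u(x)=\lambda^{1/(p-2)}v(\sqrt\lambda\,x)$. Under this scaling, \eqref{eq:main2} becomes
\[
v''=v-v^{p-1},\qquad v>0,\qquad v'(0)=v'(L)=0,\qquad L:=\sqrt\lambda .
\]
This equation is independent of $\lambda$. Its phase portrait is the standard one: a center at $(1,0)$ surrounded by periodic orbits. These orbits fill the region inside the homoclinic loop through $(0,0)$, which lies at the energy level $H=0$, where
\[
H(v,v')=\tfrac12(v')^2+\tfrac1pv^p-\tfrac12v^2 .
\]
Each periodic orbit with energy $h\in(h_{\min},0)$, $h_{\min}=\frac1p-\frac12$, oscillates between two turning points $0<m(h)<1<M(h)$, with $W(m)=W(M)=h$.

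\emph{Reduction to the half-period.} A solution that is strictly increasing on $[0,L]$ has $v'>0$ inside, and $v'=0$ only at the endpoints. It therefore travels exactly once from the left turning point $m(h)$ to the right one $M(h)$ of some orbit. Hence $L=T(h)$, where
\[
T(h)=\int_{m(h)}^{M(h)}\frac{ds}{\sqrt{2(h-\frac1ps^p+\frac12s^2)}}
\]
is the half-period. Conversely, every $h$ with $T(h)=L$ produces such a solution. A strictly decreasing solution is the reflection $x\mapsto1-x$ of an increasing one. So both existence and uniqueness reduce to describing the range of $T$ and proving that $T$ is injective.

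\emph{Range and monotonicity of $T$.} The two limits of $T$ are as follows.
\begin{itemize}
\item As $h\to h_{\min}^+$, $T(h)$ tends to the half-period of the linearization $w''=-(p-2)w$, namely $\pi/\sqrt{p-2}$.
\item As $h\to0^-$, the orbit approaches the homoclinic loop and $T(h)\to+\infty$. This is because of the logarithmic divergence near the hyperbolic equilibrium $v=0$; it can be checked directly, since the integrand behaves like $1/\sqrt{s^2+2h}$ near $s=0$.
\end{itemize}
The key step, and the expected main obstacle, is to prove that $T$ is strictly increasing on $(h_{\min},0)$ for every $p>2$. I would first try Chicone's criterion for potential centers. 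With $V(v)=\frac1pv^p-\frac12v^2$, it suffices to check that $V/(V')^2$ is strictly convex on $(0,+\infty)\setminus\{1\}$ (with the appropriate handling at $v=1$). This reduces to the sign of an explicit one-variable expression in $v$ and $p$, which I expect to be positive via elementary inequalities for $v^{p-2}$. If this becomes messy, the fallback is to invoke the known monotonicity of the period function for $v''=v-v^{p-1}$, for instance Chicone's theorem as applied in \cite{ACT1,ACT2}, or the Miyamoto–Yagasaki results.

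\emph{Conclusion.} Once $T$ is shown to be strictly increasing, $T$ is a bijection from $(h_{\min},0)$ onto $(\pi/\sqrt{p-2},+\infty)$. Hence $L=\sqrt\lambda$ lies in the range of $T$ if and only if $\lambda>\pi^2/(p-2)=\lambda_*(p)$, and in that case the level $h$ with $T(h)=\sqrt\lambda$ is unique. Undoing the scaling gives the unique increasing solution $u_\lambda$. Every other strictly monotone solution is its reflection $x\mapsto 1-x$.
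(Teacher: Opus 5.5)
Your proposal is correct and follows essentially the same route as the paper. Both use the same scaling to the $\lambda$-independent problem on $[0,L]$ with $L=\sqrt\lambda$, identify $L$ with the half-period of the orbit traversed, use the same limits $\pi/\sqrt{p-2}$ and $+\infty$, and take the monotonicity from the literature (\cite{AN,BDL} in the paper). Your energy $h=V(\alpha)$ is just a decreasing reparametrization of the paper's $\alpha=u_1(0)$, so your increasing $T(h)$ is the paper's decreasing $L_p(\alpha)$.

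One caveat applies if you try Chicone's criterion directly. It must be applied to the potential normalized to vanish at the center, i.e.\ to $(V-V(1))/(V')^2$. Since $V(1)=\frac1p-\frac12<0$, the unnormalized quotient $V/(V')^2$ tends to $-\infty$ at $v=1$ and cannot be convex near that point.
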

			\begin{proposition}
				\label{prop:mass}
				For every $p>2$ and every $\lambda>\lambda_*(p)$, let $u_\lambda$ be the solution of \eqref{eq:main2} found in Proposition \ref{prop:main}. There exists $\varepsilon>0$ such that, for every $p\in(2,2+\varepsilon)$, the map
				\[
				M_p(\lambda):=\int_0^1 u_\lambda(x)^2\,dx
				\]
				is strictly increasing on $(\lambda_*(p),+\infty)$ and $M_p((\lambda_*(p),+\infty))=(\mu_*(p),+\infty)$, where 
				\begin{equation}
					\label{eq:mu2}
					\mu_*(p):=\left(\frac{\pi^2}{p-2}\right)^{\frac2{p-2}}.
				\end{equation} 
			\end{proposition}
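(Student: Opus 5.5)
The plan is to reduce everything to the period function of a fixed planar Hamiltonian system and its $v^2$-weighted analogue, and then exploit the large factor $\frac{6-p}{p-2}$ that appears as $p\to2^+$. Write $\lambda=\kappa^2$ and rescale the strictly increasing solution of Proposition \ref{prop:main} as $u_\lambda(x)=\lambda^{\frac1{p-2}}v(\kappa x)$. Then $v$ solves
\[
v''=v-v^{p-1},\qquad v'(0)=v'(\kappa)=0,
\]
and is strictly increasing on $[0,\kappa]$. In the phase plane, $v$ is half of a periodic orbit around the center $(1,0)$. These orbits are inside the homoclinic loop and are labelled by the energy $h=\frac12(v')^2-\frac12v^2+\frac1pv^p\in(h_{\min},0)$, with $h_{\min}=\frac1p-\frac12$.

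Let $T_p(h)$ be the half-period and $A_p(h):=\int_{\text{half orbit}}v^2\,dt$. The constraint reads $\kappa=T_p(h)$, and a direct change of variables gives
\[
M_p(\lambda)=\lambda^{\frac2{p-2}-\frac12}A_p(h)=T_p(h)^{\frac{6-p}{p-2}}\,A_p(h).
\]
I take from (the proof of) Proposition \ref{prop:main} that $T_p$ is strictly increasing from $T_p(h_{\min}^+)=\pi/\sqrt{p-2}$ (the linearisation at $v=1$ has frequency $\sqrt{p-2}$) to $+\infty$ as $h\to0^-$. Hence $h\mapsto\lambda=T_p(h)^2$ is an increasing bijection onto $(\lambda_*(p),+\infty)$, and it suffices to prove
\[
\frac{d}{dh}\log M_p=\frac{6-p}{p-2}\,\frac{T_p'}{T_p}+\frac{A_p'}{A_p}>0\qquad\text{on }(h_{\min},0)
\]
for $p$ close to $2$.

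The range statement is the easy part. As $h\to h_{\min}^+$ the orbit shrinks to $v\equiv1$, so $A_p\to T_p=\pi/\sqrt{p-2}$ and $M_p\to\left(\pi^2/(p-2)\right)^{\frac2{p-2}}=\mu_*(p)$. As $h\to0^-$ we have $T_p\to\infty$, while $A_p$ stays bounded below because the orbit spends time of order one near $v\sim1$. Therefore $M_p\to+\infty$, and continuity together with monotonicity gives the range $(\mu_*(p),+\infty)$.

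For the monotonicity, since $T_p'>0$ the only danger is $A_p'<0$. Write $A_p=\int_{v_-}^{v_+}\frac{v^2\,dv}{\sqrt{2(h+\frac12v^2-\frac1pv^p)}}$ and $T_p$ in the same way, and differentiate using the standard Gauss–Manin/Chicone regularised expressions; differentiating under the integral sign naively is not legitimate because the integrand blows up at the turning points $v_\pm$. I will split into three regimes.

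\emph{Near the homoclinic, $h\to0^-$.} Here $T_p'/T_p\gtrsim 1/|h|$, because the time spent near the saddle grows like $\log(1/|h|)$. By contrast, $A_p'$ stays bounded, since $v^2$ kills the saddle contribution. So the first term dominates.

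\emph{Near the center.} Both derivatives must be expanded via Lindstedt–Poincaré in the amplitude $\varepsilon$:
\[
T_p=\frac{\pi}{\sqrt{p-2}}\bigl(1+\tau_p\varepsilon^2+\dots\bigr),\qquad A_p=T_p\bigl(1+\alpha_p\varepsilon^2+\dots\bigr).
\]
The condition becomes $\frac{4}{p-2}\tau_p+\alpha_p>0$, to be checked explicitly from the cubic and quartic Taylor coefficients of $v-v^{p-1}$ at $1$. To make this uniform as $p\to2$, I will use the logarithmic variable $v=e^{z}$: then $v^{p-2}=e^{(p-2)z}$, and after rescaling time by $\sqrt{p-2}$ the system converges to the limit Hamiltonian $\frac12\dot z^2+\frac12 z^2+\dots$, with nonlinear corrections coming from $e^{z}$ factors.

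\emph{Intermediate compact range of $h$.} As $p\to2$ with the same rescaling, $T_p\sqrt{p-2}$ and $A_p\sqrt{p-2}$ converge in $C^1$ to limits $T_2$ and $A_2$ associated with $\ddot z=-z+O(z^2)$, i.e.\ the limiting equation $v''=-v\log v$. The key point is that $T_2'>0$ strictly on compact sets; I expect this can be checked via Chicone's criterion for the potential $V(v)=\frac{v^2}{4}(2\log v-1)$. Given $T_2'>0$, the factor $\frac{6-p}{p-2}\to\infty$ beats the bounded $A_p'/A_p$.

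\textbf{Main obstacle.} The hard part is the uniform matching of these three regimes as $p\to2$, especially near the center. There $T_p'$ vanishes at first order in $h-h_{\min}$, so the sign of the leading coefficient $\tau_p$ of the limiting (logarithmic) problem must be computed and shown to be nonzero. I would settle that coefficient first, since everything else is a compactness and domination argument.
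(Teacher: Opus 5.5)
You follow the paper's architecture. The paper parametrizes by $\alpha=v_{\min}$ rather than by $h$, and combines a perturbative expansion at the center, $C^1$ convergence on a compact range to $v''=-v\log v$, and strict monotonicity of the limiting half-period via Chicone's function $V/(V')^2$ (this last step is exactly the Appendix). The genuine gap is at the homoclinic end. There you use only fixed-$p$ saddle asymptotics, but the argument needs a neighbourhood of the homoclinic whose size does not depend on $p$: $\alpha\in(0,\alpha_0]$ with $\alpha_0$ fixed, or rescaled energies $\eta=h/(p-2)\in(-\eta_0,0)$ (the whole window $(h_{\min},0)$ has length $\approx(p-2)/4$). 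Neither of your two arguments reaches such a neighbourhood.

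\begin{itemize}
\item The hyperbolic saddle at $v=0$ (rate $1$) governs the half-period only where $(p-2)\log(1/v)\gtrsim1$, i.e.\ for $\alpha\lesssim e^{-1/(p-2)}$, and this region collapses as $p\to2$. For larger $\alpha$ the dynamics at small $v$ is $v''\approx(p-2)\log(1/v)\,v$, with rate $\sqrt{(p-2)\log(1/v)}\ll1$.
\item The compact-range convergence cannot be pushed to $\eta\to0^-$. The limiting nonlinearity $v\log v$ is not differentiable at $0$, so the limiting half-period blows up like $2\sqrt{\log(1/\alpha)}$ rather than like $\log(1/\alpha)$. Consequently $\sqrt{p-2}\,T_p$ does not converge to it uniformly near the homoclinic.
\end{itemize}

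So the layer $e^{-1/(p-2)}\lesssim\alpha\le\alpha_0$ is covered by nothing in your plan. The missing ingredient is an estimate uniform in $p$ across this transition. The paper sets $z=(p-2)\log(1/\alpha)$ and proves that $(p-2)L_p$ is close in $C^1$, in relative terms and uniformly for $p$ near $2$, to the single profile $K(z)=\int_0^z(1-e^{-s})^{-1/2}\,ds$, which interpolates between $2\sqrt z$ and $z$ (Lemmas \ref{lem:Hq}--\ref{lem:I2}, Corollary \ref{cor:L}). Together with a separate uniform bound on the mass integral, this gives $L_p'/L_p\le-C$ and $|N_p'/N_p|\le C$ on $(0,\alpha_0]$ (Proposition \ref{prop:a=0}). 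This, not the center, is the hardest step of the proof.

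Your heuristics in that regime are also inaccurate. For fixed $p$, $T_p(h)=\frac12\log(1/|h|)+O(1)$, so $T_p'/T_p\sim1/(|h|\log(1/|h|))$, not $\gtrsim1/|h|$. Nor is $A_p'(h)$ bounded: the weight $v^2$ only softens the saddle contribution, and in fact $A_p'(h)\sim-\frac12\log(1/|h|)$. Near the homoclinic you are therefore exactly in the dangerous case $A_p'<0$, and only a quantitative, $p$-uniform comparison of the two terms closes the argument.

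Conversely, the center is easier than you expect. $T_p'(h)$ does not vanish at $h_{\min}$; only derivatives with respect to the amplitude do. The expansion $\rho_{q,r}=1-\frac{(1+q)(4+q)}{12}r^2+O(r^3)$ of Lemma \ref{lem:expzrho} comes from a Lyapunov--Schmidt reduction in which $q=p-2$ enters smoothly up to $q=0$ through $g(0,s)=(1+s)\log(1+s)$. It shows that $\tau_p$ is positive and bounded away from zero: with amplitude $|r|$, $\tau_p=\frac{(1+q)(4+q)}{24}\to\frac16$. So the factor $\frac{4}{p-2}$ wins immediately once the expansion is uniform in $q$.
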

			
			Let us then prove Theorem \ref{thm:uniq} exploiting Propositions \ref{prop:main}--\ref{prop:mass}.
			
			\begin{proof}[Proof of Theorem \ref{thm:uniq}]
				Recall first that, up to a phase shift and a change of sign, energy ground states of $E_p$ on $H_\mu^1(0,1)$ are real and non-negative on $[0,1]$. Moreover, the same standard rearrangement arguments used in the proof of Proposition \ref{prop:E>0} above ensure that (up to the reflection $x\mapsto1-x$) every non-negative ground state is non-decreasing on $[0,1]$. Since ground states are solutions of \eqref{eq:nls}, a direct inspection of the phase plane then shows that either they are constant on the whole interval $[0,1]$, or they are strictly increasing on $[0,1]$. 
				
				Let $\varepsilon>0$ be as in Proposition \ref{prop:mass}, $p\in(2,2+\varepsilon)$, and $\mu_*(p)$ as in \eqref{eq:mu2}. For every $\mu\in(0,\mu_*(p)]$, Propositions \ref{prop:main}--\ref{prop:mass} show that there exists no strictly increasing solution of \eqref{eq:main2} with mass $\mu$. Hence, for each of these masses the unique positive energy ground state of $E_p$ in $H_\mu^1(0,1)$ must be the constant function $\kappa_\mu:=\sqrt\mu$. Let then $\mu>\mu_*(p)$. By Propositions \ref{prop:main}--\ref{prop:mass}, there exists a unique strictly increasing solution of \eqref{eq:main2} with mass $\mu$. Moreover, in \cite[Theorem 2.1 and Eq. (14)]{CDS} it has been proved that the constant function $\kappa_\mu$ is not even a local minimizer of $E_p$ in $H_\mu^1(0,1)$ whenever
				\[
				\mu>\left(\frac{\lambda_2(0,1)}{p-2}\right)^{\frac2{p-2}}=\mu_*(p)\,.
				\]
				Hence, $\kappa_\mu$ is not a ground state of $E_p$ in $H_\mu^1(0,1)$ if $\mu>\mu_*(p)$, in turn implying that the positive ground state is unique and corresponds to the unique strictly increasing solution of \eqref{eq:main2} with the desired mass.
			\end{proof}

			The rest of the section is devoted to the proof of Propositions \ref{prop:main}--\ref{prop:mass}. 
			
			\begin{proof}[Proof of Proposition \ref{prop:main}] Note first that \eqref{eq:main2} admits no solution if $\lambda\leq0$, because every $u$ solving \eqref{eq:main2} satisfies $u^{p-2}\leq\lambda$ at each of its minimum points. Since $\lambda>0$, any solution $u$ of \eqref{eq:main2} can be written as
				\begin{equation}\label{eq:scaling}
					u(x)=\lambda^{1/(p-2)}u_1(\sqrt\lambda x),
				\end{equation}
				where $u_1$ is a solution of 
				\begin{equation}\label{eq:normalized}
					-u_1''+u_1=u_1^{p-1},\qquad u_1>0,\qquad u_1'(0)=u_1'(L)=0
				\end{equation}
				with $\sqrt\lambda=L$.
				If $u_1$ is a strictly increasing solution of \eqref{eq:normalized} such that $u_1(0)=\alpha$, for some $0<\alpha<1$, then $u_1(L)=\beta_p(\alpha)$ for the unique $1<\beta_p(\alpha)<(p/2)^{1/(p-2)}$ such that
				\begin{equation}\label{eq:same-energy}
					F_p(\alpha)=F_p(\beta_p(\alpha)),
				\end{equation}
				where
				\begin{equation}\label{eq:Fq}
					F_p(t):=\frac{t^2}2-\frac{t^p}{p}\,,\qquad t\geq0\,.
				\end{equation}
				Note that $F_p$ coincides with $-W_1$ as defined in \eqref{eq:W}. However, since in this section we consider the limit $p\to2^+$, we introduce this notation to highlight the dependence on $p$. 
				
				Arguing exactly as in the previous section immediately shows that the length $L$ is determined by $\alpha$ as
				\begin{equation}
					\label{eq:La}
					L_p(\alpha):=\int_\alpha^{\beta_p(\alpha)}
					\frac{dt}{\sqrt{2(F_p(t)-F_p(\alpha))}}\,.
				\end{equation}
				For every $p>2$, it is well-known that $L_p\in C^1(0,1)$, $L_p$ is strictly decreasing on $(0,1)$, and
				\begin{equation}\label{eq:L-bif-limit}
					\lim_{\alpha\to1^-}L_p(\alpha)=\frac{\pi}{\sqrt{p-2}},\qquad  \lim_{\alpha\to0^+}L_p(\alpha)=+\infty\,.
				\end{equation}
				Proofs of the regularity and the monotonicity of $L_p$ and of the first identity in \eqref{eq:L-bif-limit} can be found e.g. in \cite{AN, BDL}, whereas the second identity in \eqref{eq:L-bif-limit} follows by $F_p(\alpha)\to0$ as $\alpha\to0^+$ and $F_p(t)=t^2/2+o(t^2)$ when $t$ is close to $0$. 
				
				Since the map $\alpha\mapsto L_p(\alpha)$ is strictly decreasing on $(0,1)$, by the relation between \eqref{eq:main2} and \eqref{eq:normalized} it follows that, for every $\alpha\in (0,1)$,  the unique strictly increasing solution of \eqref{eq:normalized} with $u_1(0)=\alpha$ identifies a unique strictly increasing solution of \eqref{eq:main2} with $\lambda=\lambda_p(\alpha)=L_p(\alpha)^2$. Therefore, for every $\lambda\in\R$ there exists at most one strictly monotone solution to \eqref{eq:main2} (up to the symmetry $x\mapsto 1-x$). By \eqref{eq:L-bif-limit} such solution exists if and only if $\displaystyle\lambda>\lim_{\alpha\to1^-}\lambda_p(\alpha)=\pi^2/(p-2)=\lambda_*(p)$, and we conclude.
			\end{proof}
			
			The proof of Proposition \ref{prop:mass} is more involved. The aim is to prove that the first derivative of $M_p(\lambda)$ is strictly positive on $(\lambda_*(p),+\infty)$ uniformly in $p$ on a right neighborhood of $2$. To do this, it is convenient to recall again that the strictly increasing solution $u_\lambda$ to \eqref{eq:main2} satisfies $u_\lambda(x)=\lambda^{1/(p-2)}u_1(\sqrt\lambda x)$, where $u_1$ is the unique strictly increasing solution to  \eqref{eq:normalized} determined by the initial value $u_1(0)=\alpha$. Exploiting this identity, it is possible to rewrite both $\lambda$ and $M_p(\lambda)$ as functions of the parameter $\alpha\in(0,1)$. Indeed, by the relation between \eqref{eq:main2} and \eqref{eq:normalized}, if $u_1(0)=\alpha$, we have $\lambda(\alpha)=L_p(\alpha)^2$ with $L_p$ as in \eqref{eq:La}, and
			\begin{equation}
				\label{eq:M-param}
				\begin{split}
					M_p(\alpha)&:=M_p(\lambda(\alpha))=\int_0^1 u_{\lambda(\alpha)}(x)^2\,dx=\lambda(\alpha)^{\frac2{p-2}}\int_0^1 u_1\left(\sqrt{\lambda(\alpha)}x\right)^2\,dx\\
					&=\lambda(\alpha)^{\frac2{p-2}-\frac12}\int_0^{L_p(\alpha)}u_{1}(y)^2\,dy=L_p(\alpha)^{\frac4{p-2}-1}N_p(\alpha)\,,
				\end{split}
			\end{equation}
			where
			\begin{equation*}
				N_p(\alpha):=\int_\alpha^{\beta_p(\alpha)}
				\frac{t^2\,dt}{\sqrt{2(F_p(t)-F_p(\alpha))}}\,.\label{eq:Nq}
			\end{equation*}
			As already pointed out for $L_p$, it is easily seen that $N_p\in C^1(0,1)$ and 
			\begin{equation}\label{eq:N-bif-limit}
				\lim_{\alpha\to1^-}N_p(\alpha)=\frac{\pi}{\sqrt{p-2}},\qquad \lim_{\alpha\to0^+}N_p(\alpha)
				=\int_0^{(\frac p2)^{\frac1{p-2}}}
				\frac{t^2\,dt}{\sqrt{2F_p(t)}}<\infty.
			\end{equation}
			Since $\alpha\mapsto\lambda_p(\alpha)$ is strictly decreasing, $M_p(\lambda)$ being strictly increasing on $(\lambda_*(p),+\infty)$ is equivalent to $M_p(\alpha)$ being strictly decreasing on $\alpha\in(0,1)$. Hence, we are left to show that $M_p'(\alpha)<0$ for every $\alpha\in(0,1)$ and $p\in(2,2+\varepsilon)$. In fact, relying on the monotonicity of the logarithm, we will often equivalently focus on the sign of 
			\begin{equation}
				\label{eq:derlog}
				\left(\log M_{2+q}(\alpha)\right)'=\frac{4-q}{q}\frac{L_{2+q}'(\alpha)}{L_{2+q}(\alpha)}+\frac{N_{2+q}'(\alpha)}{N_{2+q}(\alpha)}\,.
			\end{equation}
			We split the analysis in two parts. First, we show that there exist $\varepsilon_1>0$ and $\underline\alpha\in(0,1)$ such that $M_p'(\alpha)<0$ for every $(p,\alpha)\in(2,2+\varepsilon_1)\times(\underline\alpha,1)$. Then, we prove that, for every given $\overline\alpha\in(0,1)$, there exists $\varepsilon_2>0$ (depending only on $\overline\alpha$) such that $M_p'(\alpha)<0$ for every $(p,\alpha)\in(2,2+\varepsilon_2)\times(0,\overline\alpha]$. Once these two steps are completed, Proposition \ref{prop:mass} will follow directly by taking $\overline\alpha>\underline\alpha$ and $\varepsilon=\min\left\{\varepsilon_1,\varepsilon_2\right\}$. 
			
			\subsection{Monotonicity of $M_p(\alpha)$ close to $\alpha=1$}
			
			The regime $\alpha\in(\underline\alpha, 1)$ is addressed through a bifurcation analysis. Precisely, since $\alpha$ close to $1$ corresponds to $\lambda$ close to $\lambda_*(p)$, we develop a Lyapunov-Schmidt reduction to prove that, locally around $\lambda_*(p)$, the monotone solutions to \eqref{eq:main2} bifurcate along a unique smooth branch from the constant solution. We derive an explicit expansion for such solutions and, relying on the relation between $\alpha$ and $\lambda$, we use it to show that the derivative of $M_p$ with respect to $\alpha$ is strictly negative in a uniform left neighborhood of $1$ when $p$ is close to $2$. 
			
			Since we need estimates uniform in $p$ around $2$, from now on we set $q:=p-2$. We write again any monotone solution $u_\lambda$ to \eqref{eq:main2} as $u_\lambda(x)=\lambda^{1/q}u_1(\sqrt\lambda\,x)$, where $u_1$ solves \eqref{eq:normalized} with $L=\sqrt\lambda$. Setting $v(y)=u_1(\sqrt\lambda y/\pi)$ and dividing the equation by $q$ we obtain that $v(y)$ satisfies
			\begin{equation}
				\label{eq:v2}
			\rho v''+\frac{v^{1+q}-v}q=0\quad\text{on }(0,\pi)\,,\qquad v>0\,,\qquad v'(0)=v'(\pi)=0
			\end{equation}
			with 
			\begin{equation}
				\label{eq:rho}
			\rho:=\frac{\pi^2}{q\lambda}\,.
			\end{equation}
			Since we want to develop a bifurcation analysis around the constant solution $v\equiv1$, note that, if we write $v=1+w$, then $w$ solves the equation
			\begin{equation}
				\label{eq:w}
			\rho w''+g(q,w)=0\quad\text{on }(0,\pi), \qquad w'(0)=w'(\pi)=0\,,
			\end{equation}
			where, for every $q\neq0$, 
			\begin{equation}
			\label{eq:g}
			g(q,w):=\frac{(1+w)^{1+q}-(1+w)}q\,.
			\end{equation}
			\begin{remark}
				\label{rem:g}
				Given $\eta\in(0,1)$, if we set
				\[
				g(0,s):=(1+s)\log(1+s)\qquad\forall s\in[-\eta,\eta]\,,
				\]
				combining with \eqref{eq:g} gives a $C^\infty$ function $g$ on a neighborhood of $\left\{0\right\}\times[-\eta,\eta]$, with 
				\begin{equation}
					\label{eq:g-derivatives-origin}
					g(q,0)=0,
					\qquad
					\partial_sg(q,0)=1\,,
				\end{equation}
				and
				\begin{equation}\label{eq:g-taylor}
					g(q,s)
					=s+\frac{1+q}{2}s^2+\frac{(1+q)(q-1)}6s^3+s^4R(q,s),
				\end{equation}
				where $R$ is $C^\infty$ on compact subsets of its domain. Indeed, that such $g$ defines a smooth function on a neighborhood of $\left\{0\right\}\times[-\eta,\eta]$ is a direct consequence of the identity 
			\[	
			 g(q,s)=(1+s)\sum_{k=1}^{\infty}\frac{q^{k-1}(\log(1+s))^k}{k!}\qquad\forall s>-1\,,
			 \] 
			and direct differentiations and a Taylor expansion with integral remainder then give \eqref{eq:g-derivatives-origin} and \eqref{eq:g-taylor}. As a consequence, setting
			\[
			H_N^2(0,\pi):=\left\{w\in H^2(0,\pi)\,:\,w'(0)=w'(\pi)=0\right\}\,,
			\] 
			the map $(q,w)\mapsto g(q,w(\cdot))$ is well-defined and $C^\infty$ from $(-\underline q,\underline q)\times U$ to $L^2(0,\pi)$, for some $\underline q>0$ and a neighborhood $U$ of $0$ in $H^2_N(0,\pi)$, with derivatives with respect to $w$ given by $\displaystyle D_w^k g(q,w)[h_1,\dots,h_k]=\partial_s^k g(q,w)h_1\dots h_k$, for $h_1,\dots, h_k\in H_N^2(0,\pi)$. 
			\end{remark}
			Recall that here we are considering values of $\lambda$ in \eqref{eq:main2} close to the threshold $\lambda_*(p)$, that, by \eqref{eq:rho}, corresponds to values of $\rho$ close to $1$.  Note that, when $\rho=1$, by \eqref{eq:g-derivatives-origin} the linearization of \eqref{eq:w} at $w\equiv0$ is $h''+h=0$, whose solutions in $H_N^2(0,\pi)$ are given by the span of $\psi(y):=\cos y$. Hence, to construct a branch of solutions to \eqref{eq:w} bifurcating from $w\equiv0$ when $\rho$ is close to $1$ and $q>0$ is small enough, we perform a Lyapunov-Schmidt reduction and look for solutions in the form
			\begin{equation}
				\label{eq:formw}
			w(y)=r\psi(y)+z(y)\,,
			\end{equation}
			with $r\in\R$ and $z\in H_N^2(0,\pi)$ such that $\langle z,\psi\rangle_{L^2(0,\pi)}=0$. To do this, set
			\[
			\mathcal F(q,w,\rho):=\rho w''+g(q,w)\,,
			\]
			which is well-defined and $C^\infty$ from a neighborhood of $(0,0,1)$ in $\R\times H_N^2(0,\pi)\times\R$ to $L^2(0,\pi)$. Let then $P:L^2(0,\pi)\to\text{\normalfont span}\left\{\psi\right\}$ be the orthogonal projection on the span of $\psi$, and $P^\perp:=I-P$ (where $I$ denotes the identity on $L^2(0,\pi)$). That a function $w$ in the form \eqref{eq:formw} solves \eqref{eq:w} can thus be equivalently rewritten as
			\[
			\mathcal F^\perp(q,r,z,\rho):=P^\perp\mathcal F(q,r\psi+z,\rho)=0\,,\qquad \langle\mathcal F(q,r\psi+z,\rho),\psi\rangle_{L^2(0,\pi)}=0\,.
			\]
			In the next lemmas we identify $z$ as a function of $q,r,\rho$ to satisfy the first identity, and then find $\rho$ as a function of $q,r$ to fulfill the second condition.
			
			\begin{lemma}
				\label{lem:LSort}
				There exist $q_0,r_0,\eta_0>0$ and a unique $C^\infty$ map $\displaystyle z:(-q_0,q_0)\times(-r_0,r_0)\times(1-\eta_0,1+\eta_0)\to H_N^2(0,\pi)$, such that $\displaystyle \langle z(q,r,\rho),\psi\rangle_{L^2(0,\pi)}=0$, 
				\begin{equation}\label{eq:range-solved}
					\mathcal F^\perp(q,r,z(q,r,\rho),\rho)=0
				\end{equation}
				and
				\begin{equation}\label{eq:z-zero}
					z(q,0,\rho)=0\,,\qquad \partial_rz(q,0,\rho)=0\,.
				\end{equation}
			\end{lemma}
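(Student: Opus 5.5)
This is a standard Lyapunov--Schmidt step, which we would carry out with the Implicit Function Theorem on the range equation. Set $X:=\{z\in H_N^2(0,\pi)\,:\,\langle z,\psi\rangle_{L^2(0,\pi)}=0\}$ and $Y:=P^\perp L^2(0,\pi)=\{f\in L^2(0,\pi)\,:\,\langle f,\psi\rangle_{L^2(0,\pi)}=0\}$. Both are closed subspaces, hence Banach spaces. By Remark \ref{rem:g}, the map $(q,r,z,\rho)\mapsto\mathcal F^\perp(q,r,z,\rho)=P^\perp\bigl(\rho(r\psi+z)''+g(q,r\psi+z)\bigr)$ is $C^\infty$ from a neighborhood of $(0,0,0,1)$ in $\R\times\R\times X\times\R$ into $Y$. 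It satisfies $\mathcal F^\perp(q,0,0,\rho)=P^\perp g(q,0)=0$ for all $(q,\rho)$, by \eqref{eq:g-derivatives-origin}.

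Next we compute the partial derivative in $z$ at $(q,0,0,\rho)$. Using $\partial_sg(q,0)=1$, it equals $D_z\mathcal F^\perp(q,0,0,\rho)h=P^\perp(\rho h''+h)$ for $h\in X$. Since $h\in X$ is orthogonal to $\psi$ and $(\rho h''+h)$ is orthogonal to $\psi$ by integration by parts with Neumann conditions, this operator is just $h\mapsto\rho h''+h$. At $\rho=1$ we expand in the Neumann cosine basis $\{\cos(ky)\}_{k\ge0}$: $X$ is spanned by the modes with $k\ne1$, and the operator acts diagonally with multipliers $1-k^2$. These are nonzero for $k\neq1$, with $|1-k^2|\sim k^2$. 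Hence $h\mapsto h''+h$ is a bounded isomorphism $X\to Y$, with bounded inverse by the $H^2$ norm characterization through Fourier coefficients.

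With this invertibility at the base point $(0,0,0,1)$, the Implicit Function Theorem yields $q_0,r_0,\eta_0>0$, a neighborhood $V$ of $0$ in $X$, and a unique $C^\infty$ map $z(q,r,\rho)\in V$ solving \eqref{eq:range-solved}. Orthogonality $\langle z,\psi\rangle=0$ holds by construction since $z\in X$. Uniqueness within $V$ combined with $\mathcal F^\perp(q,0,0,\rho)=0$ gives $z(q,0,\rho)=0$.

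For the second identity in \eqref{eq:z-zero}, we differentiate \eqref{eq:range-solved} in $r$ at $r=0$. This gives
\[
P^\perp\bigl(\rho(\psi+\partial_rz)''+\partial_sg(q,0)(\psi+\partial_rz)\bigr)=0,
\]
that is, $(\rho\,\partial^2_{y}+1)\partial_rz=-P^\perp\bigl((1-\rho)\psi\bigr)=0$, since $\psi''=-\psi$ and $P^\perp\psi=0$. Because $\rho\partial_y^2+1$ restricted to $X$ is invertible for $\rho$ near $1$ (invertibility is an open condition, and we shrink $\eta_0$ and $q_0$ if needed), we get $\partial_rz(q,0,\rho)=0$.

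The main point needing care is the uniform invertibility of the linearized operator on $X\to Y$ together with the smoothness of the Nemytskii map $w\mapsto g(q,w)$ jointly in $q$ near $0$, including $q\le0$. The latter is exactly what Remark \ref{rem:g} provides, using $H^2\hookrightarrow L^\infty$ so that $\|w\|_\infty\le\eta$ on $U$. The rest is routine.
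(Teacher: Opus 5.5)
Your proposal is correct and takes essentially the same route as the paper. Both apply the Implicit Function Theorem to $\mathcal F^\perp$ at $(0,0,0,1)$, using that $h\mapsto h''+h$ is an isomorphism from $X$ to $Y$; both get $z(q,0,\rho)=0$ from uniqueness and $\mathcal F(q,0,\rho)=0$, and obtain $\partial_r z(q,0,\rho)=0$ by differentiating in $r$ at $r=0$ and using invertibility of $\rho\partial_y^2+1$ on $X$ for $\rho$ near $1$. Your cosine-series argument just makes explicit the isomorphism property that the paper asserts in one line.
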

			\begin{proof}
				By Remark \ref{rem:g},
				\[
				D_z\mathcal F^\perp (q,0,0,1)h=P^\perp(h''+h)\,,
				\]
				so that, denoting by 
				\[
				X:=\left\{z\in H_N^2(0,\pi)\,:\,\langle z,\psi\rangle_{L^2(0,\pi)}=0\right\},\qquad Y:=\left\{z\in L^2(0,\pi)\,:\,\langle z,\psi\rangle_{L^2(0,\pi)}=0\right\},
				\]
				 $D_z\mathcal F^\perp(q,0,0,1):X\to Y$ is a bounded linear isomorphism, since both $X$ and $Y$ involve only functions orthogonal in $L^2(0,\pi)$ to the solutions of $h''+h=0$ in $H_N^2(0,\pi)$. Hence, a direct application of the Implicit Function Theorem \cite[Theorem 1.4]{AM} shows that there exists a unique $C^\infty$ map $z(q,r,\rho)$ defined on a neighborhood of $(0,0,1)$ satisfying \eqref{eq:range-solved}. The first identity in \eqref{eq:z-zero} then follows by $\mathcal F(q,0,\rho)=0$ for every $q,\rho$, while differentiating \eqref{eq:range-solved} with respect to $r$ at $r=0$ yields
				\[
				D_z\mathcal F^\perp(q,0,0,\rho)\partial_r z(q,0,\rho)+P^\perp D_w\mathcal F(q,0,\rho)\psi=0\,.
				\]
				Since $P^\perp D_w\mathcal F(q,0,\rho)\psi=P^\perp(\rho\psi''+\psi)=P^\perp((1-\rho)\psi)=0$, and $D_z\mathcal F^\perp(q,0,0,\rho)h=P^\perp(\rho h''+h)$ remains invertible from $X$ to $Y$ by continuity at $\rho=1$ (possibly reducing the considered neighborhood of $(0,0,1)$), we obtain the second part of \eqref{eq:z-zero}. 
			\end{proof}
			\begin{lemma}
				\label{lem:LSfd}
				There exist $q_1\in(0,q_0]$, $r_1\in(0,r_0]$ and a unique $C^\infty$ map $\rho:(-q_1,q_1)\times(-r_1,r_1)\to(1-\eta_0,1+\eta_0)$ such that 
				\begin{equation}\label{eq:rho-zero}
					\rho(q,0)=1
				\end{equation}
				and
				\begin{equation}\label{eq:reduced-solved}
					\langle\mathcal F(q,r\psi+z(q,r,\rho(q,r)),\rho(q,r)),\psi\rangle_{L^2(0,\pi)}=0\,,
				\end{equation}
				with $q_0,r_0, z(q,r,\rho)$ as in Lemma \ref{lem:LSort}. 
			\end{lemma}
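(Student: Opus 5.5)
The plan is the standard Crandall--Rabinowitz style factorization of the reduced (bifurcation) equation, followed by the Implicit Function Theorem. Using the map $z(q,r,\rho)$ from Lemma \ref{lem:LSort}, set
\[
\Phi(q,r,\rho):=\langle\mathcal F(q,r\psi+z(q,r,\rho),\rho),\psi\rangle_{L^2(0,\pi)}
\]
on $(-q_0,q_0)\times(-r_0,r_0)\times(1-\eta_0,1+\eta_0)$. This map is $C^\infty$, since $\mathcal F$ and $z$ are $C^\infty$ and the inner product with the fixed function $\psi$ is a bounded linear map. By \eqref{eq:z-zero} and the fact that $\mathcal F(q,0,\rho)=0$ (because $g(q,0)=0$), we have $\Phi(q,0,\rho)=0$ for all $q,\rho$. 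Hence the trivial branch $r=0$ cannot be used directly with the Implicit Function Theorem, and I will first divide it out. I define
\[
\Psi(q,r,\rho):=\int_0^1\partial_r\Phi(q,tr,\rho)\,dt,
\]
so that $\Phi=r\Psi$. The function $\Psi$ is $C^\infty$ by differentiation under the integral, and for $r\neq0$ the equation $\Phi=0$ is equivalent to $\Psi=0$.

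Next I compute $\Psi$ on $r=0$. By the chain rule and Remark \ref{rem:g},
\[
\Psi(q,0,\rho)=\partial_r\Phi(q,0,\rho)=\bigl\langle D_w\mathcal F(q,0,\rho)[\psi+\partial_rz(q,0,\rho)],\psi\bigr\rangle.
\]
Using $\partial_rz(q,0,\rho)=0$ from \eqref{eq:z-zero} and $\partial_sg(q,0)=1$ from \eqref{eq:g-derivatives-origin}, this becomes
\[
\Psi(q,0,\rho)=\langle\rho\psi''+\psi,\psi\rangle=(1-\rho)\|\psi\|_2^2=\frac{\pi}{2}(1-\rho).
\]
Therefore $\Psi(0,0,1)=0$ and $\partial_\rho\Psi(0,0,1)=-\pi/2\neq0$.

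The Implicit Function Theorem \cite[Theorem 1.4]{AM}, applied now to a scalar equation, gives $q_1\le q_0$, $r_1\le r_0$, a possibly smaller $\eta$-neighborhood of $1$, and a unique $C^\infty$ map $\rho(q,r)$ with $\Psi(q,r,\rho(q,r))=0$. Since $\Psi\equiv0$ implies $\Phi=r\Psi\equiv0$, this map satisfies \eqref{eq:reduced-solved}. For \eqref{eq:rho-zero}, note that $\Psi(q,0,\rho)=\frac{\pi}{2}(1-\rho)$ vanishes only at $\rho=1$, so uniqueness in the Implicit Function Theorem forces $\rho(q,0)=1$. If necessary, I will shrink $r_1,q_1$ by continuity so that $\rho$ takes values in $(1-\eta_0,1+\eta_0)$.

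The main point requiring care is the meaning of uniqueness. At $r=0$ every $\rho$ solves \eqref{eq:reduced-solved}, so uniqueness cannot hold for $\Phi$ itself. It holds for the divided map $\Psi$, equivalently among solutions with $r\neq0$, and $\rho(q,0)=1$ is the unique choice that keeps the map continuous at $r=0$. I would state the uniqueness in that sense, which is the standard interpretation for this kind of bifurcation statement.
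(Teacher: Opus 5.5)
Your proposal is correct and is essentially the paper's proof: the paper also divides out the trivial branch via $\Psi(q,r,\rho)=\int_0^1\partial_rB(q,\theta r,\rho)\,d\theta$ with $B=\frac{2}{\pi}\langle\mathcal F(q,r\psi+z(q,r,\rho),\rho),\psi\rangle_{L^2(0,\pi)}$ (the factor $2/\pi$ only normalizes $\Psi(q,0,\rho)=1-\rho$). It then uses $\partial_rz(q,0,\rho)=0$ to compute this value, applies the Implicit Function Theorem to $\Psi$, and deduces $\rho(q,0)=1$ from uniqueness together with $\Psi(q,0,1)=0$. Your remark on the meaning of uniqueness is consistent with the statement, since the lemma imposes $\rho(q,0)=1$ as a requirement, and this is exactly what removes the degeneracy at $r=0$.
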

			\begin{proof}
				Set $\displaystyle B(q,r,\rho):=\frac2\pi\langle\mathcal F(q,r\psi+z(q,r,\rho),\rho),\psi\rangle_{L^2(0,\pi)}$ and $\displaystyle\Psi(q,r,\rho):=\int_0^1\partial_r B(q,\theta r,\rho)\,d\theta$. Since $B$ is $C^\infty$ (because $\mathcal F$ and $z$ are) and $B(q,0,\rho)=0$ by Lemma \ref{lem:LSort} and $\mathcal{F}(q,0,\rho)=0$, then $\Psi$ is $C^\infty$ too. Moreover, by \eqref{eq:z-zero} and $\psi(y)=\cos y$,
				\begin{equation}
					\label{eq:Psi(q,0,rho)}
				\Psi(q,0,\rho)=\partial_rB(q,0,\rho)=\frac{2}{\pi}\langle D_w\mathcal F(q,0,\rho)\psi, \psi\rangle_{L^2(0,\pi)}=\frac{2}{\pi}\langle(1-\rho)\psi, \psi\rangle_{L^2(0,\pi)}=1-\rho,
				\end{equation}
				so that $\partial_\rho\Psi(q,0,\rho)=-1$. Therefore, we can apply the Implicit Function Theorem to $\Psi$ to obtain a unique $C^\infty$ map $\rho:(-q_1,q_1)\times(-r_1,r_1)\to(1-\eta_0,1+\eta_0)$ satisfying $\Psi(q,r,\rho(q,r))=0$. Since by definition $\Psi(q,r,\rho)=B(q,r,\rho)/r$ for every $r\neq0$, this directly implies \eqref{eq:reduced-solved}, and \eqref{eq:rho-zero} follows by the uniqueness of the map $\rho(q,r)$ and $\Psi(q,0,1)=0$ by \eqref{eq:Psi(q,0,rho)}.
			\end{proof}
			Given $(q,r)\in(-q_1,q_1)\times(-r_1,r_1)$, denote by $\rho_{q,r}:=\rho(q,r)$ and $z_{q,r}:=z(q,r,\rho(q,r))$ the maps obtained in Lemmas \ref{lem:LSort}--\ref{lem:LSfd}. Then, the function 
			\begin{equation}
				\label{eq:vqr}
				v_{q,r}(y):=1+r\psi(y)+z_{q,r}(y)
			\end{equation}
			 solves \eqref{eq:v2} with $\rho=\rho_{q,r}$. By construction, $v_{q,r}>0$ on $[0,\pi]$ and it is non-constant when $r\neq0$. It is in fact easy to see that $v_{q,r}$ is strictly decreasing on $[0,\pi]$ for every $r>0$, while it is strictly increasing on $[0,\pi]$ for every $r<0$.
			Indeed, Lemma \ref{lem:LSort} implies
			\begin{equation}\label{eq:profile-C1}
				\left\lVert\frac{v_{q,r}-1}{r}-\psi\right\rVert_{C^1(0,\pi)}\le C|r|
			\end{equation}
			for $(q,r)$ in a suitable neighborhood of $(0,0)$, with $C>0$ independent of $q$ and $r$. Combining with the fact that $v_{q,r}$ solves \eqref{eq:v2}, $g(q,s)=s+O(s^2)$ for $s$ close to $0$ uniformly on $q$, $\rho_{q,r}=1+O(r)$ by Lemma \ref{lem:LSfd} uniformly on $q$, and $\psi''(y)=-\cos y=-\psi(y)$, we obtain
			\begin{equation}\label{eq:profile-C2}
				\left\lVert\frac{v_{q,r}''}r+\psi\right\rVert_{L^\infty(0,\pi)}\le C|r|\,.
			\end{equation}
			Fix then $\delta\in(0,\pi/4)$ such that $|\psi(y)|\geq1/2$ for every $y\in[0,\delta]\cup[\pi-\delta,\pi]$. Since $\psi'(y)=-\sin(y)<0$ on $[\delta,\pi-\delta]$, by \eqref{eq:profile-C1} we have
			\begin{equation}
				\label{eq:v'<0}
			\frac{v_{q,r}'(y)}r\leq\psi'(y)+C |r|<0\qquad\text{on }[\delta,\pi-\delta]
			\end{equation}
			for every $r\in[-r_1,r_1]$ (possibly reducing $r_1$). For $r\in(0,r_1]$, this implies $v_{q,r}'<0$ on $[\delta,\pi-\delta]$. Moreover, by \eqref{eq:profile-C2} and the choice of $\delta$, we also have (again up to a further reduction of $r_1$)
			\[
			\begin{split}
			&\frac{v_{q,r}''(y)}r\leq-\psi(y)+C|r|\leq -\frac14\qquad\,\forall y\in[0,\delta]\\
			&\frac{v_{q,r}''(y)}r\geq-\psi(y)-C|r|\geq \frac14\qquad\quad\forall y\in[\pi-\delta,\pi]\,,
			\end{split}
			\]
			namely $v_{q,r}''(y)<0$ if $y\in[0,\delta]$ and $v_{q,r}''(y)>0$ if $y\in[\pi-\delta,\pi]$, for every $r>0$. Together with \eqref{eq:v'<0} and $v_{q,r}'(0)=v_{q,r}'(\pi)=0$, this yields $v_{q,r}'<0$ on $(0,\pi)$ for every $r>0$. Hence, $v_{q,r}$ is strictly decreasing on $[0,\pi]$ for every $r>0$. Since $\cos(\pi-y)=-\cos y$ for every $y\in[0,\pi]$, the invariance of \eqref{eq:v2} with respect to the reflection $y\mapsto\pi-y$ and the uniqueness of the maps $\rho_{q,r}$, $z_{q,r}$ for $(q,r)$ in a neighborhood of $(0,0)$ then ensure that $v_{q,-r}(y)=v_{q,r}(\pi-y)$ for every $q,r,y$. Hence, $v_{q,r}$ is strictly increasing on $[0,\pi]$ for every $r<0$. 
			
			Recall that the aim of this subsection is to prove that the mass of the strictly increasing solution to \eqref{eq:main2} is a strictly increasing function of $\lambda$ in a right neighborhood of $\lambda_*(p)$, uniformly on $p$ in a right neighborhood of $2$. In terms of $v_{q,r}$, this requires to investigate the dependence of the mass on the parameter $r$ locally around $0$ with $r<0$, uniformly on $q$ in a right neighborhood of $0$. To do this, we need the following asymptotic expansions for $\rho_{q,r}$ and $z_{q,r}$.
			\begin{lemma}
				\label{lem:expzrho}
				There exist $C>0$, $q_2\in(0,q_1]$, $r_2\in(0,r_1]$ such that, for every $(q,r)\in[-q_2,q_2]\times[-r_2,r_2]$ and every $y\in[0,\pi]$, it holds
				\begin{equation}
					\label{eq:expzrho}
					z_{q,r}(y)=\left(-\frac{1+q}{4}+\frac{1+q}{12}\cos(2y)\right)r^2+R_z(q,r,y)\,,\qquad \rho_{q,r}=1-\frac{(1+q)(4+q)}{12}r^2+R_{\rho}(q,r)\,,
				\end{equation}
				with
				\begin{equation}
					\label{eq:resti}
					\|R_z(q,r,\cdot)\|_{H^2 {(0,\pi)} }+|R_\rho(q,r)|\leq Cr^3\,,\qquad|\partial_r R_\rho(q,r)|\leq Cr^2\,.
				\end{equation}
			\end{lemma}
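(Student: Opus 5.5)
The plan is to Taylor-expand the two $C^\infty$ maps of Lemmas \ref{lem:LSort}--\ref{lem:LSfd} in $r$ at $r=0$, using \eqref{eq:g-taylor} for the nonlinearity. Uniformity in $q$ comes from restricting to a compact neighbourhood $[-q_2,q_2]\times[-r_2,r_2]$ of $(0,0)$. On such a set all derivatives of $z$ and $\rho$ are bounded, so Taylor's formula with integral remainder gives constants $C$ independent of $q$.

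\emph{Step 1: expansion of $z_{q,r}$.} By \eqref{eq:z-zero}, $z(q,0,\rho)=0$ for every $\rho$, so all pure $\rho$-derivatives vanish at $r=0$. Also $\partial_r z(q,0,\rho)=0$, so
\[
z(q,r,\rho)=\tfrac{r^2}{2}\,\partial_r^2 z(q,0,\rho)+O(r^3)\quad\text{in }H^2(0,\pi).
\]
Since $\rho_{q,r}=1+O(r)$ by Lemma \ref{lem:LSfd}, we can replace $\partial_r^2z(q,0,\rho_{q,r})$ by $\partial_r^2z(q,0,1)$ at the cost of $O(r^3)$. To identify $z_2:=\frac12\partial_r^2z(q,0,1)$, I would differentiate \eqref{eq:range-solved} twice in $r$ at $(r,\rho)=(0,1)$. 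By Remark \ref{rem:g}, \eqref{eq:z-zero} and $\partial_s^2g(q,0)=1+q$, this gives
\[
z_2''+z_2=-\tfrac{1+q}{2}\,P^\perp\psi^2=-\tfrac{1+q}{4}\,(1+\cos 2y),\qquad z_2'(0)=z_2'(\pi)=0,\quad \langle z_2,\psi\rangle=0,
\]
because $\cos^2 y$ is already orthogonal to $\cos y$. Solving this equation mode by mode gives $z_2=-\frac{1+q}{4}+\frac{1+q}{12}\cos 2y$, which is the first identity in \eqref{eq:expzrho}.

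\emph{Step 2: evenness of $\rho$.} The reflection identity $v_{q,-r}(y)=v_{q,r}(\pi-y)$ established before the lemma forces $\rho(q,-r)=\rho(q,r)$. Hence $\rho(q,\cdot)$ is an even $C^\infty$ function, and $\rho(q,r)=1+\rho_2(q)r^2+R_\rho$ with $|R_\rho|\le Cr^3$. Since the remainder of a smooth even function is $O(r^4)$ with derivative $O(r^3)$, the bound $|\partial_rR_\rho|\le Cr^2$ is immediate.

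\emph{Step 3: computing $\rho_2$.} I would expand $B(q,r,\rho)=\frac2\pi\langle\rho w''+g(q,w),\psi\rangle$, with $w=r\psi+z$, up to order $r^3$.

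1. The linear part gives exactly $r(1-\rho)$. This uses the Neumann conditions and $\psi''=-\psi$, so that $\langle z''+z,\psi\rangle=0$.
2. The quadratic term contributes $\frac{1+q}{2}\cdot 2r^3\langle z_2,\psi^2\rangle$, since $\int_0^\pi\cos^3=0$.
3. The cubic term contributes $\frac{(1+q)(q-1)}{6}r^3\int_0^\pi\cos^4$.
4. The remaining terms are $O(r^4)$.

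Dividing by $r$ and setting $\Psi(q,r,\rho_{q,r})=0$ yields $\rho_2(q)$ as an explicit polynomial in $q$. This should be compared with $-\frac{(1+q)(4+q)}{12}$.

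The main obstacle is this bookkeeping of the $r^3$ coefficient. One has to keep the correct cross term $2r\,\psi z_2$ in $w^2$ and use the correct trigonometric integrals, namely $\int_0^\pi\cos^4=3\pi/8$ and $\langle z_2,\cos^2\rangle$. A first pass of mine gave a coefficient that did not obviously match the stated one. I would therefore recheck the computation carefully, possibly by testing against the explicit $q=0$ equation $v''+v\log v=0$. The structural part of the argument (smoothness, evenness, and the remainder bounds \eqref{eq:resti}) is routine once the coefficients are settled.
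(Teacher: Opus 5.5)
Your framework is the paper's. You Taylor-expand the smooth Lyapunov--Schmidt maps in $r$, identify $z_2$ from the order-$r^2$ equation, and get the $r^2$-coefficient of $\rho_{q,r}$ by projecting the order-$r^3$ balance onto $\psi$. Steps 1 and 2 are correct, and obtaining $z_2$ by differentiating $\mathcal F^\perp=0$ twice in $r$ is a clean justification. The one variation is how you get $\partial_r\rho(q,0)=0$: you use the evenness $\rho(q,-r)=\rho(q,r)$ coming from the reflection $y\mapsto\pi-y$, whereas the paper projects the order-$r^2$ equation onto $\psi$ and uses $\int_0^\pi\cos^3y\,dy=0$. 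Both work.

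However, as written the proposal does not prove the lemma. The explicit coefficient $-\frac{(1+q)(4+q)}{12}$ is the actual content of the statement, and you never compute it; you even report a discrepancy.

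There is no discrepancy: your items 1--3 mix normalizations. Item 1 already includes the factor $2/\pi$ from the definition of $B$, while items 2 and 3 are plain inner products. Combining them as written gives a coefficient off by a factor $\pi/2$. With the plain inner product $\langle \rho w''+g(q,w),\psi\rangle_{L^2(0,\pi)}$, the linear part is $\frac{\pi}{2}r(1-\rho_{q,r})=-\frac{\pi}{2}\rho_2r^3+O(r^4)$. The order-$r^3$ balance therefore reads
\[
-\frac{\pi}{2}\rho_2+(1+q)\langle z_2,\cos^2y\rangle_{L^2(0,\pi)}+\frac{(1+q)(q-1)}{6}\int_0^\pi\cos^4y\,dy=0\,.
\]
Use $\int_0^\pi\cos^2y\,dy=\frac{\pi}{2}$, $\int_0^\pi\cos(2y)\cos^2y\,dy=\frac{\pi}{4}$ and $\int_0^\pi\cos^4y\,dy=\frac{3\pi}{8}$. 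These give $\langle z_2,\cos^2y\rangle_{L^2(0,\pi)}=-\frac{5(1+q)\pi}{48}$, and hence
\[
\rho_2=\frac{2}{\pi}\left(-\frac{5(1+q)^2\pi}{48}+\frac{(1+q)(q-1)\pi}{16}\right)=\frac{1+q}{24}\bigl(3(q-1)-5(1+q)\bigr)=-\frac{(1+q)(4+q)}{12}\,,
\]
which is exactly the stated value.

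For the $q=0$ check you proposed: the formula gives $-\frac13$. This matches $\rho=\omega^2$ with the classical Lindstedt correction $\omega=1+\bigl(\frac{3b}{8}-\frac{5a^2}{12}\bigr)r^2$ for $x''+x+ax^2+bx^3=0$, taking $a=\frac12$ and $b=-\frac16$.

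With this computation written out, your proof is complete and coincides with the paper's up to the minor variation noted above.
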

			\begin{proof}
			Recalling Lemmas \ref{lem:LSort}--\ref{lem:LSfd}, write
			\begin{equation}
				\label{eq:exp1}
				z_{q,r}(y)=z_2r^2+z_3r^3+O(r^4)\,,\qquad\rho_{q,r}=1+k_1r+k_2r^2+O(r^3)
			\end{equation}
			locally around $r=0$, for suitable $z_2,z_3$ depending on $q$ and $y$, and $k_1,k_2$ depending on $q$. Plugging into the equation \eqref{eq:v2} satisfied by $v_{q,r}$ and expanding $g(q,s)$ around $s=0$ as in Remark \ref{rem:g} then yields
			\begin{equation}
				\label{eq:expeq}
			\begin{split}
			(1+k_1r&\,+k_2r^2+O(r^3))(r\psi''+r^2z_2''+r^3z_3''+O(r^4))+r\psi+r^2z_2+r^3z_3+O(r^4)\\
			&\,+\frac{q+1}{2}\left(r\psi+r^2z_2+r^3z_3+O(r^4)\right)^2+\frac{(q+1)(q-1)}{6}\left(r\psi+r^2z_2+r^3z_3+O(r^4)\right)^3\\
			&\qquad\,\,\,\qquad\qquad\qquad\qquad\qquad\qquad\qquad+O\left(\left(r\psi+r^2z_2+r^3z_3+O(r^4)\right)^4\right)=0\,.
			\end{split}
			\end{equation}
			For the identity to be satisfied locally around $r=0$, the coefficient of each power of $r$ must be equal to zero. This is automatically true for the term $r$, since it is multiplied by $\psi''+\psi$, which is identically zero on $[0,\pi]$ by definition of $\psi$. Collecting all the terms involving $r^2$ gives the condition
			\[
			z_2''+k_1\psi''+z_2+\frac{q+1}{2}\psi^2=0\,.
			\]
			Since $\displaystyle\langle z_{q,r},\psi\rangle_{L^2(0,\pi)}=0$, the same is true for $z_2$, so that $\langle z_2'',\psi\rangle_{L^2(0,\pi)}=-\langle z_2,\psi\rangle_{L^2(0,\pi)}=0$ (the first identity follows by $\psi''=-\psi$). Therefore, multiplying the previous equation by $\psi$, integrating over $[0,\pi]$ and recalling that $\displaystyle\int_0^\pi\psi(y)^3\,dy=0$ and $\displaystyle\int_0^\pi\psi''(y)\psi(y)\,dy=-\int_0^\pi\psi(y)^2\,dy\neq0$, we obtain 
			\begin{equation}
				\label{eq:k1}
				k_1=0\,.
			\end{equation}
			In turn, this implies
			\[
			z_2''+z_2=-\frac{q+1}{2}\psi^2\qquad\forall y\in[0,\pi]\,, 
			\]
			and the unique solution of this equation satisfying $\displaystyle z_2'(q,0)=z_2'(q,\pi)=0$ for every $q$ is
			\begin{equation}
				\label{eq:z2}
			z_2(q,y)=-\frac{1+q}{4}+\frac{1+q}{12}\cos(2y)\,.
			\end{equation}
			We then focus on the coefficients of $r^3$ in \eqref{eq:expeq}, yielding the condition
			\begin{equation}
				\label{eq:cond3}
			z_3''+k_2\psi''+z_3+(q+1)\psi z_2+\frac{(q+1)(q-1)}{6}\psi^3=0\,.
			\end{equation}
			Since $z_3$ is orthogonal in $L^2(0,\pi)$ to $\psi$ (because so is $z$), then $\langle z_3''+z_3,\psi\rangle_{L^2(0,\pi)}=0$. Moreover, by \eqref{eq:z2},
			\[
			\langle\psi z_2,\psi\rangle_{L^2(0,\pi)}=-\frac{1+q}{4}\int_0^\pi\cos^2(y)\,dy+\frac{1+q}{12}\int_0^\pi\cos^2(y)\cos(2y)\,dy=-\frac{5(1+q)\pi}{48}\,,
			\]
			so that multiplying \eqref{eq:cond3} by $\psi$ and integrating over $[0,\pi]$ leads to
			\[
			-\frac\pi2k_2-\frac{5(1+q)^2\pi}{48}+\frac{3(q+1)(q-1)\pi}{48}=0\,,
			\]
			namely 
			\begin{equation}
				\label{eq:k2}
				k_2=-\frac{(1+q)(4+q)}{12}\,.
			\end{equation}
			Combining \eqref{eq:exp1}, \eqref{eq:k1}, \eqref{eq:z2} and \eqref{eq:k2} gives \eqref{eq:expzrho}. The uniform estimates \eqref{eq:resti} then follow by the regularity of $z_{q,r}$ and $\rho_{q,r}$.
			\end{proof}
			We can now prove the main result of this subsection.
			\begin{proposition}
				\label{prop:monl*}
				Let $M_p(\alpha)$ be the function defined in \eqref{eq:M-param}. There exist $\varepsilon_1>0$ and $\underline\alpha\in(0,1)$ such that $M_p'(\alpha)<0$ for every $\alpha\in(\underline\alpha,1)$ and every $p\in(2,2+\varepsilon_1)$. 
			\end{proposition}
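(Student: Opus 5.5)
The plan is to work on the Lyapunov--Schmidt branch $v_{q,r}$ with $r<0$, which consists of strictly increasing solutions. I will compute the mass explicitly in terms of $(q,r)$, show that its logarithmic $r$-derivative is negative with a bound uniform in small $q>0$, and then transfer the result to the parameter $\alpha$.

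\emph{Step 1: mass along the branch.} Since $u_\lambda(x)=\lambda^{1/q}u_1(\sqrt\lambda x)$ and $v(y)=u_1(\sqrt\lambda y/\pi)$, the substitution $y=\pi x$ gives $u_\lambda(x)=\lambda^{1/q}v(\pi x)$. Together with $\lambda=\pi^2/(q\rho)$ from \eqref{eq:rho}, this yields
\[
M=\Big(\frac{\pi^2}{q\rho_{q,r}}\Big)^{2/q}\frac1\pi\int_0^\pi v_{q,r}^2\,dy ,
\qquad
\partial_r\log M=-\frac2q\,\frac{\partial_r\rho_{q,r}}{\rho_{q,r}}+\frac{\partial_r\int_0^\pi v_{q,r}^2}{\int_0^\pi v_{q,r}^2}.
\]
By Lemma \ref{lem:expzrho} (including the bound on $\partial_rR_\rho$), we have $\partial_r\rho_{q,r}=-\frac{(1+q)(4+q)}6r+O(r^2)$ uniformly in $q$. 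Hence the first term equals $\frac{(1+q)(4+q)}{3q}\,r+\frac1qO(r^2)$. For the second term, expand $\int_0^\pi(1+r\psi+z_{q,r})^2$ using $\int_0^\pi\psi=0$, $\int_0^\pi\psi^2=\pi/2$, $\int_0^\pi\cos 2y=0$ and \eqref{eq:expzrho}. This gives $\pi-\frac{q\pi}2r^2+O(r^3)$, so the second term is $O(|r|)$ uniformly in $q$. To differentiate the remainder $R_z$ in $r$ I will use the smoothness of $z$ from Lemma \ref{lem:LSort}, or equivalently the fact that the $r$-derivative of a smooth function vanishing to third order is $O(r^2)$ uniformly on compact sets. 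Therefore
\[
\partial_r\log M=\frac1q\Big(\frac{(1+q)(4+q)}3r+O(r^2)\Big)+O(|r|),
\]
which is strictly negative for $r\in[-r_3,0)$ and $q\in(0,q_3)$, with $r_3,q_3>0$ independent of $q$, because $\frac43|r|/q$ dominates.

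\emph{Step 2: from $r$ to $\alpha$.} The branch gives $\alpha=u_1(0)=v_{q,r}(0)=1+r+z_{q,r}(0)$, so $\partial_r\alpha=1+O(r)>0$ uniformly in $q$. Moreover $\partial_r\lambda=-\pi^2\partial_r\rho/(q\rho^2)$ has the sign of $r$, hence is negative for $r<0$. This is consistent with the fact that $\lambda$ decreases in $\alpha$. Then $M_p'(\alpha)=\partial_rM/\partial_r\alpha<0$ on the image of $[-r_3,0)$ under $r\mapsto\alpha$, which contains an interval $(1-c,1)$ with $c>0$ independent of $q$.

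\emph{Step 3 (main obstacle): every increasing solution with $\alpha$ close to $1$ lies on the branch, uniformly in $q$.} By Proposition \ref{prop:main}, for each $\alpha$ there is exactly one strictly increasing solution. It therefore suffices to show that for $\alpha\in(\underline\alpha,1)$ the rescaled profile $v=u_1(\sqrt\lambda\,\cdot/\pi)$ is $H^2$-close to $1$ and $\rho$ is close to $1$, uniformly for $q$ small. Once this holds, the local uniqueness in the Implicit Function Theorems of Lemmas \ref{lem:LSort}--\ref{lem:LSfd} identifies $v$ with some $v_{q,r}$, $r<0$. My first attempt will use the first integral: $v$ takes values in $[\alpha,\beta_p(\alpha)]$, and $\beta_p(\alpha)\to1$ as $\alpha\to1$ uniformly in small $q$, since $F_p$ has a nondegenerate maximum at $1$ with $F_p''(1)=-q$ after the rescaling by $1/q$ already built into \eqref{eq:v2}. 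This gives $\|v-1\|_\infty$ small. For $\rho$, I will show that $\rho=\pi^2/(qL_p(\alpha)^2)\to1$ uniformly, i.e. that $qL_p(\alpha)^2\to\pi^2$ uniformly in $q$ as $\alpha\to1$; this follows from writing $L_p(\alpha)$ via the rescaled potential $g(q,\cdot)$, which is a smooth perturbation of the harmonic case. The equation \eqref{eq:v2}, written as $\rho v''=-g(q,v-1)$, then upgrades the $L^\infty$ closeness to $H^2$ closeness. The delicate point is making all these smallness statements uniform in $q\to0^+$; the smooth extension of $g$ to $q=0$ in Remark \ref{rem:g} is what should make this work.
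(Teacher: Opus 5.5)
Your Steps 1--2 are essentially the paper's proof. The paper also writes $M_p=g_q(r)=\pi^{4/q-1}(q\rho_{q,r})^{-2/q}S_{q,r}$ along the branch $r<0$, and shows $\partial_r\log g_q(r)<0$ because the term $\frac{(1+q)(4+q)r}{3q}$ dominates $|\partial_rS_{q,r}|/S_{q,r}\le C|r|$; it obtains this bound from $\partial_rS_{q,0}=0$ rather than from your (correct) explicit expansion $S_{q,r}=\pi-\frac{q\pi}{2}r^2+O(r^3)$. It then transfers the result to $\alpha$ via $\partial_r\alpha\ge\frac12$, exactly as you do.

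Your Step 3, which you call the main obstacle, is not needed. Each $v_{q,r}$ with $r<0$ rescales to a strictly increasing solution of \eqref{eq:normalized} with $u_1(0)=\alpha(q,r)$. Moreover, $r\mapsto\alpha(q,r)$ covers an interval $[1-c,1)$ uniformly in $q$. Uniqueness for the Cauchy problem (Proposition \ref{prop:main}) therefore already identifies this branch solution with the one defining $M_p(\alpha)$. So no $q$-uniform a priori closeness of the solution to the constant state is required, and the paper uses this implicitly.
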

			\begin{proof}
				Recall that by definition $q=p-2$. The discussion developed so far shows that the strictly increasing solution $u_\lambda$ to \eqref{eq:main2} with $\lambda$ close to $\lambda_*(p)$ corresponds to the solution $v_{q,r}$ to \eqref{eq:v2} with $r<0$ through the transformation $u_\lambda(x)=\lambda^{1/q}v_{q,r}(\pi x)$. Hence, by \eqref{eq:rho} we have
				\begin{equation}
					\label{eq:Mr}
				M_p(\lambda)=\int_0^1 u_\lambda(x)^2\,dx=\frac{\lambda(q,r)^{\frac2q}}\pi\int_0^\pi v_{q,r}(y)^2\,dy=\frac{\pi^{\frac4q-1}}{(q\rho_{q,r})^{2/q}}\int_0^\pi v_{q,r}(y)^2\,dy=:g_q(r)
				\end{equation}
				with $r<0$. Write
				\[
				\partial_r\log g_q(r)=-\frac{2\partial_r\rho_{q,r}}{q\rho_{q,r}}+\frac{\partial_r S_{q,r}}{S_{q,r}}\,,
				\]
				with $\displaystyle S_{q,r}:=\int_0^\pi v_{q,r}(y)^2\,dy$. By Lemma \ref{lem:expzrho},
				\[
				-\frac{2\partial_r\rho_{q,r}}{q\rho_{q,r}}=\frac{(1+q)(4+q)r+O(r^2)}{3q(1+O(r^2))}
				\]
				for $(q,r)\in(0,q_2]\times[-r_2,0)$, whereas by \eqref{eq:vqr} and Lemmas \ref{lem:LSort}--\ref{lem:LSfd} we immediately have 
				\[
				\frac\pi2\leq S_{q,r}\leq\frac{3\pi}2\qquad\forall (q,r)\in(0,q_2]\times[-r_2,0)
				\]
				(up to reducing $q_2$ and $r_2$ if necessary). Moreover, since $\displaystyle\partial_r v_{q,r}=\psi+\partial_r z_{q,r}$ and $\partial^2_{rr}v_{q,r}=\partial^2_{rr}z_{q,r}$, by Lemma \ref{lem:expzrho} it holds
				\[
				\partial_r S_{q,0}=2\int_0^\pi v_{q,0}(y)\partial_r v_{q,0}(y)\,dy=2\int_0^\pi\psi(y)\,dy=0
				\]
				and thus
				\[
				|\partial_r S_{q,r}|\leq\int_r^0|\partial^2_{tt}S_{q,t}  | \,dt\leq C |r|
				\]
				for every $(q,r)\in(0,q_2]\times[-r_2,0)$. All in all, we obtain
				\[
				\partial_r\log g_q(r)\leq \frac{(1+q)(4+q)r}{3q}-Cr\leq\left(\frac{4}{3q_2}-C\right)r<0
				\]
				for every $(q,r)\in(0,q_2]\times[-r_2,0)$, up to possibly reducing $q_2$ if needed. By the monotonicity of the logarithm, this shows that $g_q(r)$ is strictly decreasing with respect to $r\in[-r_2,0)$ for every $q\in(0,q_2]$. 
				
				Moreover, by \eqref{eq:vqr}, Lemma \ref{lem:expzrho} and $v_{q,r}(0)=u_1(0)=\alpha(q,r)$ for every $r<0$, we obtain $\alpha(q,r)=1+r+O(r^2)$ uniformly on $q\in(0,q_2]$, that is $\partial_{r}\alpha(q,r)=1+O(r)$. Hence, up to possibly reducing $r_2$, we have $\displaystyle \partial_{r}\alpha(q,r)\geq1/2$ uniformly in $(q,r)\in(0,q_2]\times[-r_2,0)$. This guarantees that $\alpha(q,r)$ is strictly increasing with respect to $r\in[-r_2,0)$ uniformly in $q\in(0,q_2]$, and together with $\alpha(q,0)=1$ also shows that $\alpha(q,-r_2)\leq1-r_2/2=:\underline\alpha$ for every $q\in(0,q_2]$. Therefore, the image of $\alpha(q,r)$ as $r$ varies in $[-r_2,0)$ contains the interval $[\underline\alpha,1)$ for every $q\in(0,q_2]$. Combining with \eqref{eq:Mr} and the monotonicity of $g_q(r)$, we obtain that $M_p(\alpha)$ as in \eqref{eq:M-param} is strictly decreasing with respect to $\alpha\in(\underline\alpha,1)$, for every $q\in(0,q_2]$.
			\end{proof}
			
			\subsection{Monotonicity of $M_p$ for $\alpha$ close to $0$}
			The main result of this subsection is the following.
			\begin{proposition}
				\label{prop:a=0}
				There exist $\overline q>0$, $\alpha_0>0$, $C>0$ such that
				\begin{equation}
					\label{eq:boundLN}
					\frac{L_{2+q}'(\alpha)}{L_{2+q}(\alpha)}\leq-C\,,\qquad\left|\frac{N_{2+q}'(\alpha)}{N_{2+q}(\alpha)}\right|\leq C\qquad\forall (q,\alpha)\in(0,\overline q]\times(0,\alpha_0]\,.
				\end{equation}
			In particular, $M_{2+q}'(\alpha)<0$ for every $(q,\alpha)\in(0,\overline q]\times(0,\alpha_0]$.
			\end{proposition}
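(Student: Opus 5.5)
The plan is to exploit the fact that the ratios $L_{2+q}'/L_{2+q}$ and $N_{2+q}'/N_{2+q}$ are invariant under multiplying $L_{2+q}$ and $N_{2+q}$ by a constant depending only on $q$. This lets me remove the degeneration as $q\to0^+$ before estimating. Since $\frac12-\frac{t^q}{2+q}=\frac{(2+q)-2t^q}{2(2+q)}$, we have
\[
G_q(t):=\frac{F_{2+q}(t)}{q}=\frac{t^2}{2}\cdot\frac{1-2\frac{t^q-1}{q}}{2+q}\;\longrightarrow\;G_0(t):=\frac{t^2}{4}\left(1-2\log t\right)\qquad\text{as }q\to0^+ .
\]
Using the series identity of Remark \ref{rem:g}, $(q,t)\mapsto G_q(t)/t^2$ is smooth up to $q=0$ on compact subsets of $(0,\infty)$. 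Moreover $G_q(t)=\frac{q}{2(2+q)}\,\frac{t^2}{q}+\dots$ is of the form $t^2\,h(q,t)$ with $h(0^+,t)\to\frac14$ uniformly on $(0,1]$ in the relevant sense. Setting $\widetilde L_q:=\sqrt q\,L_{2+q}$ and $\widetilde N_q:=\sqrt q\,N_{2+q}$, both are given by the same integrals \eqref{eq:La} and \eqref{eq:Nq} with $F_{2+q}$ replaced by $G_q$. The endpoints are $\alpha$ and $\beta_q(\alpha)$, where $\beta_q(\alpha)$ is defined by $G_q(\beta_q(\alpha))=G_q(\alpha)$. We have $\beta_q(\alpha)\to\beta_0(\alpha)$, with $\beta_0(0)=\sqrt e$, and the functions $G_q$ share the unique critical point $t=1$, which is a nondegenerate maximum uniformly in $q$. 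The claim therefore reduces to proving \eqref{eq:boundLN} for the family $\widetilde L_q,\widetilde N_q$, $q\in[0,\overline q]$, where every constant can be taken uniform by a continuity/compactness argument in $q$.

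To differentiate despite the singular endpoints, I will split each integral at $t=1$. On each monotone branch of $G_q$ I change variables to the energy level, $w=G_q(t)$, and then rescale $w=e+(E_q-e)\sigma$, where $e=G_q(\alpha)$ and $E_q=G_q(1)$. This rewrites $\widetilde N_q$ (and likewise $\widetilde L_q$) as integrals over fixed domains in $\sigma$ with integrands depending smoothly on $e$. One can then differentiate in $e$ and multiply by $\frac{de}{d\alpha}=G_q'(\alpha)=O(\alpha)$. Near $t=1$ the standard square-root behavior of the branches $t_\pm(w)$ is handled exactly as in the classical proofs of regularity of period functions cited for \eqref{eq:L-bif-limit} (\cite{AN,BDL}), uniformly in $q$.

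For $N$: on the lower branch near $w=0$ one has $t_-(w)\sim 2\sqrt w$ and $G_q'(t_-)\sim t_-/2$, so the integrand $t_-^2/G_q'(t_-)\sim t_-$ is harmless. This yields $|\widetilde N_q'(\alpha)|\le C$. In addition, $\widetilde N_q(\alpha)\ge c>0$, because the integrand of $\widetilde N_q$ is bounded below on $[1/2,1]$ uniformly, and together these give the second bound in \eqref{eq:boundLN}. For $L$ the lower branch is genuinely singular: the integrand $1/G_q'(t_-)\sim 2/t_-$ produces a logarithm, reflecting that $t''\approx t$ near the saddle. I will therefore prove directly the two-sided estimates
\[
\log\tfrac1\alpha-C\le\widetilde L_q(\alpha)\le \log\tfrac1\alpha+C,\qquad \widetilde L_q'(\alpha)\le-\frac{c}{\alpha}\qquad\text{for }\alpha\in(0,\alpha_0].
\]
For this I compare $\sqrt{2(G_q(t)-G_q(\alpha))}$ with $\frac12\sqrt{t^2-\alpha^2}$ on $[\alpha,\delta]$, which gives $\int_\alpha^\delta\frac{2\,dt}{\sqrt{t^2-\alpha^2}}$ and hence the logarithm; the remainder of the integral is bounded. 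For the derivative, I differentiate only the singular piece, via the substitution $t=\alpha\cosh\tau$, and check that the error has derivative $o(1/\alpha)$. It then follows that $\widetilde L_q'/\widetilde L_q\le -c/(\alpha(\log\frac1\alpha+C))\le -C$ for $\alpha\le\alpha_0$.

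The final claim then follows from \eqref{eq:derlog}: the term $\frac{4-q}{q}\frac{L'}{L}\le-\frac{3C}{q}$ dominates $|N'/N|\le C$ once $q\le\overline q$ is small. I expect the main obstacle to be making the logarithmic asymptotics of $\widetilde L_q'$ uniform in $q$ down to $q=0$. This requires quantitative control of $G_q(t)-\frac{t^2}{4}$ near $t=0$; note that this difference is $O(t^2\cdot t^q\log t)$-type rather than $O(t^3)$. I would first try to bound it by $C t^2|\log t|$ uniformly, which should still leave an integrable correction.
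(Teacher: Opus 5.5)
There is a genuine gap, and it sits exactly at the step you flag as the main obstacle: the model $G_q(t)\approx t^2/4$ near $t=0$ is wrong. In fact $G_q(t)=\frac{t^2}{2q}\bigl(1-\frac{2}{2+q}t^q\bigr)$ and $G_q'(t)=\frac{t(1-t^q)}{q}$. For every fixed $q>0$ this gives $G_q(t)/t^2\to\frac1{2q}$ as $t\to0^+$, whereas $G_0(t)/t^2=\frac14+\frac12\log\frac1t\to+\infty$; the change from one behaviour to the other happens at $t\approx e^{-1/q}$. So $G_q(t)-\frac{t^2}{4}$ is not a perturbation of the main term: your proposed bound $Ct^2|\log t|$ is of larger order than $t^2/4$ itself.

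Consequently the claimed estimates $\widetilde L_q(\alpha)=\log\frac1\alpha+O(1)$ and $\widetilde L_q'(\alpha)\le-\frac c\alpha$ are false. At $q=0$ one has $\widetilde L_0(\alpha)=\ell(\alpha)=2\sqrt{\log(1/\alpha)}+O(1)$ and $\ell'(\alpha)\approx-\frac{1}{\alpha\sqrt{\log(1/\alpha)}}$. For fixed $q>0$ one has $\widetilde L_q(\alpha)\approx\sqrt q\log\frac1\alpha$ and $\widetilde L_q'(\alpha)\approx-\frac{\sqrt q}{\alpha}$. The compactness argument in $q\in[0,\overline q]$ cannot repair this. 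Smoothness of $G_q(t)/t^2$ on compact subsets of $(0,+\infty)$ says nothing in the region $t\to0$, which is exactly where $\alpha\to0$ lives, and the limits $q\to0^+$, $\alpha\to0^+$ do not commute.

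The missing idea is the two-scale variable $z=-q\log\alpha$. The substitution $t=e^{-s/q}$ turns $q\int_\alpha^1\frac{dt}{\sqrt{2(F_{2+q}(t)-F_{2+q}(\alpha))}}$ into $H_q(z)$. The paper proves $H_q/K\to1$ and $H_q'/K'\to1$ uniformly for $z\ge a_\sigma q$ and $q$ small, where $K(z)=\int_0^z(1-e^{-s})^{-1/2}ds$. The piece over $[1,\beta_{2+q}(\alpha)]$ equals $q^{-1/2}U_q(z)$, with $U_q$ bounded and $U_q'$ exponentially small in $-\log\alpha$, so it is negligible for small $\alpha$. This yields $\frac{L_{2+q}'}{L_{2+q}}\approx\frac{1}{\alpha\log\alpha}\frac{zK'(z)}{K(z)}$, and the elementary bound $zK'(z)/K(z)\ge\frac12$ concludes.

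The final inequality you aim for, $L'/L\lesssim-\frac{c}{\alpha\log(1/\alpha)}$, therefore has the right shape. However, the prefactor $zK'(z)/K(z)$ genuinely moves between $\frac12$ (as $z\to0$) and $1$ (as $z\to\infty$), and no single $q$-independent logarithmic asymptotic can capture that.

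Your treatment of $N$ uses the same incorrect asymptotics ($t_-(w)\sim2\sqrt w$, $G_q'(t)\sim t/2$). There the conclusion survives, since $t^2/G_q'(t)=qt/(1-t^q)\le Ct$ uniformly on $(0,\delta]$. Still, the uniform bound on the derivative has to be proved directly rather than by continuity in $q$; the paper does this with the substitution $t=\sqrt{\alpha^2+y^2}$ and explicit bounds on $\partial_\alpha B_{q,\alpha}$.
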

			To prove Proposition \ref{prop:a=0} we need various intermediate results. We begin with the first bound in \eqref{eq:boundLN}, splitting $L_{2+q}(\alpha)$ as 
			\begin{equation}
				\label{eq:splitL}
			L_{2+q}(\alpha)=\int_\alpha^1\frac{dt}{\sqrt{2\left(F_{2+q}(t)-F_{2+q}(\alpha)\right)}}+\int_1^{\beta_{2+q}(\alpha)}\frac{dt}{\sqrt{2\left(F_{2+q}(t)-F_{2+q}(\alpha)\right)}}=:I_{q,1}(\alpha)+I_{q,2}(\alpha)
			\end{equation}
			and treating separately the two integrals. Let us start with the integral on $[\alpha,1]$, for which it is convenient to set
			\begin{equation}
				\label{eq:az}
			a:=-\log\alpha\,,\qquad z:=qa\,,
			\end{equation}
			so that $\alpha=e^{-z/q}$ and, changing the variable $t=e^{-s/q}$,
			\begin{equation}
				\label{eq:H}
			qI_{q,1}(\alpha)=\int_{0}^{z}\frac{ds}{\sqrt{h_q(s)-e^{-2(z-s)/q}h_q(z)}}=:H_q(z)\,,
			\end{equation}
			where
			\begin{equation}
				\label{eq:hq}
			h_q(s):=1-\frac{2}{2+q}e^{-s}\qquad\forall s\in\R\,.
			\end{equation}
		\begin{remark}
			\label{rem:hq}
			For later use, observe that
			\begin{equation}
				\label{eq:stimeh1}
				h_q'(s)=\frac2{2+q}e^{-s}\,,\qquad h_q(s)+h_q'(s)=1\,,
			\end{equation}
		and that there exist universal constants $c,C>0$ such that
		\begin{equation}
			\label{eq:stimeh2}
			c\min\left\{1,q+s\right\}\leq h_q(s)\leq C\min\left\{1,q+s\right\}\qquad\forall q\in(0,1],\,s\geq0\,.
		\end{equation}
		\end{remark}
			The next lemma establishes useful estimates on $H_q$ and its first derivative for $q$ close to $0$ and $a$ large, which corresponds to $\alpha$ close to $0$.
			\begin{lemma}
				\label{lem:Hq}
				For every $\sigma>0$ there exist $a_\sigma>0$ and $q_\sigma>0$ such that, for every $(q,a)\in(0,q_\sigma)\times(a_\sigma,+\infty)$, it holds
				\begin{equation}
					\label{eq:HK}
					\left|\frac{H_q(z)}{K(z)}-1\right|\leq\sigma\,,\qquad\left|\frac{H_q'(z)}{K'(z)}-1\right|\leq\sigma\,,\qquad\forall z\geq a_\sigma q\,,
				\end{equation}
			with 
			\begin{equation}
				\label{eq:K}
				K(z):=\int_0^z\frac{ds}{\sqrt{1-e^{-s}}}\,.
			\end{equation}
			\end{lemma}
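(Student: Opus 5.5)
The plan is to treat $H_q$ as a perturbation of $K$. The integrand of $H_q$ differs from $(1-e^{-s})^{-1/2}$ in only two ways: $h_q(s)$ replaces $1-e^{-s}$, and the correction $e^{-2(z-s)/q}h_q(z)$ is subtracted. Since $h_q(s)-(1-e^{-s})=\frac{q}{2+q}e^{-s}$, the first change matters only for $s\lesssim q$, where $h_q(s)\simeq q+s$ by \eqref{eq:stimeh2}. The correction is exponentially small unless $z-s\lesssim q$. The hypothesis $z\ge a_\sigma q$ with $a_\sigma$ large makes both boundary layers negligible relative to $K(z)$, which satisfies $K(z)\simeq 2\sqrt z$ for $z\le1$ and $K(z)\simeq z$ for $z\ge1$.

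\emph{Step 1 (estimate for $H_q$).} Fix $T>0$ and split $[0,z]$ into three pieces: $[0,Tq]$, $[Tq,z-Tq]$ and $[z-Tq,z]$.
\begin{itemize}
\item[(a)] On the middle piece, use concavity of $h_q$ (from \eqref{eq:stimeh1}, $h_q''=-h_q'<0$) together with $h_q(s)\ge s\,h_q'(s)$. These give
\[
h_q(z)\le h_q(s)\Bigl(1+\frac{z-s}{s}\Bigr),
\]
so that $e^{-2(z-s)/q}h_q(z)\le \bigl(1+\tfrac{z-s}{s}\bigr)e^{-2(z-s)/q}\,h_q(s)\le \omega(T)\,h_q(s)$ for $s\ge Tq$, with $\omega(T)\to0$ as $T\to\infty$. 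Moreover $h_q(s)/(1-e^{-s})=1+O(q/s)=1+O(1/T)$ there. Hence on the middle piece the integrand is $(1+o_T(1))(1-e^{-s})^{-1/2}$.
\item[(b)] On $[0,Tq]$, both $H_q$ and $K$ contribute $O(\sqrt{Tq})$. The lower bound on the denominator of $H_q$ there comes again from the concavity estimate, or more crudely from $h_q(s)-e^{-2(z-s)/q}h_q(z)\ge c(q+s)$ once $z-s\ge (a_\sigma-T)q$ is large.
\item[(c)] On $[z-Tq,z]$, substitute $s=z-q\tau$. Concavity gives the lower bound
\[
h_q(z-q\tau)-e^{-2\tau}h_q(z)\ge h_q(z)(1-e^{-2\tau})-q\tau h_q'(z-q\tau),
\]
and for $z\ge a_\sigma q$ with $a_\sigma\gg T$ the first term dominates. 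The contribution is then $O\bigl(qT/\sqrt{h_q(z)}\bigr)$, since $\int_0^T(1-e^{-2\tau})^{-1/2}d\tau\lesssim T$.
\end{itemize}
Dividing by $K(z)\gtrsim \max\{\sqrt z,z\}$, the two boundary contributions are $O(T/\sqrt{a_\sigma})$ relative to $K(z)$, using $h_q(z)\gtrsim\min\{1,z\}$. I would choose $T$ large first, then $a_\sigma$ large and $q_\sigma$ small, to get the first bound in \eqref{eq:HK}.

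\emph{Step 2 (estimate for $H_q'$), which I expect to be the main obstacle.} The integrand of $H_q$ is singular at $s=z$, so I would first write
\[
H_q(z)=q\int_0^{z/q}\frac{d\tau}{\sqrt{h_q(z-q\tau)-e^{-2\tau}h_q(z)}}
\]
and differentiate under the integral sign. This produces a boundary term $\bigl(h_q(0)-e^{-2z/q}h_q(z)\bigr)^{-1/2}\approx\bigl(\tfrac{q}{2+q}\bigr)^{-1/2}$ and a bulk term
\[
-\frac q2\int_0^{z/q}\frac{h_q'(z-q\tau)-e^{-2\tau}h_q'(z)}{\bigl(h_q(z-q\tau)-e^{-2\tau}h_q(z)\bigr)^{3/2}}\,d\tau .
\]
These two terms are individually large and nearly cancel, because the bulk integrand behaves like $(q+s)^{-3/2}$ near $s=0$. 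To avoid this cancellation, I would first integrate by parts in the $s$ variable near $s=0$, or equivalently compare with the same manipulation performed on $K$, namely $K'(z)=(1-e^{-z})^{-1/2}$. The goal is to rewrite $H_q'(z)$ as $\bigl(h_q(z)\bigr)^{-1/2}$ plus an integral whose kernel is integrable uniformly in $q$.

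In that rewritten form, the remaining integral is controlled by the same three-region splitting as in Step 1. The region $s\lesssim Tq$ gives a relative error $O(\sqrt{q/z})$. The region $z-s\lesssim Tq$ is tamed by the factor $(1-e^{-2\tau})$ in the numerator $h_q'(z-q\tau)-e^{-2\tau}h_q'(z)\approx h_q'(z)(1-e^{-2\tau})$, which cancels part of the singularity of the denominator. Finally, $\bigl(h_q(z)\bigr)^{-1/2}/K'(z)\to1$ uniformly for $z\ge a_\sigma q$, which yields the second bound in \eqref{eq:HK}.
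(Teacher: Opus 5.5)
Your Step 1 is correct, and it takes a different route from the paper. The paper inserts the intermediate function $\widetilde H_q(z)=\int_0^z h_q(s)^{-1/2}\,ds$ and estimates $K-\widetilde H_q$ and $H_q-\widetilde H_q$ separately. You instead compare $H_q$ with $K$ directly on three regions, using the concavity of $h_q$. The bounds (a)--(c) hold once $T$ is fixed large, then $a_\sigma\gg T$ and $q_\sigma\ll 1/T$.

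The gap is in Step 2, which you yourself flag as the main obstacle. You state the target form, $h_q(z)^{-1/2}$ plus a small remainder, but give no mechanism that produces it, and the two you mention do not work.
\begin{itemize}
\item Taken literally, ``the same manipulation performed on $K$'' is $\infty-\infty$. In the $\tau$-variable, $K(z)=q\int_0^{z/q}(1-e^{-(z-q\tau)})^{-1/2}\,d\tau$ has boundary term $(1-e^{0})^{-1/2}=+\infty$. Its bulk kernel is of order $(z-q\tau)^{-3/2}$ near $\tau=z/q$, which is not integrable.
\item Integrating by parts only near $s=0$ does absorb the large boundary term $\approx h_q(0)^{-1/2}$. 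But it leaves $h_q(\epsilon)^{-1/2}$ plus the bulk over $s\in[\epsilon,z]$, whose kernel in the middle region is approximately $\frac{d}{ds}\bigl(h_q(s)^{-1/2}\bigr)$. That bulk is therefore about $h_q(z)^{-1/2}-h_q(\epsilon)^{-1/2}$. This is not a small error: it is exactly what converts $h_q(\epsilon)^{-1/2}$ into $h_q(z)^{-1/2}$, so it must be evaluated, not ``controlled''.
\end{itemize}
Your three-region treatment of the remainder says nothing about the middle region, which is precisely where this non-negligible contribution sits. Also, ``integrable uniformly in $q$'' is not the property you need. The remainder must be $o(K'(z))$, quantitatively $O(q\,h_q(z)^{-3/2})$.

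The missing idea is to extract $h_q(z)^{-1/2}$ exactly by subtracting $\widetilde H_q$ rather than $K$. By the fundamental theorem of calculus $\widetilde H_q'(z)=h_q(z)^{-1/2}$, and $h_q(0)=q/(2+q)>0$ keeps everything finite at $s=0$. The remainder is
$R_q(z)=H_q(z)-\widetilde H_q(z)=q\int_0^{z/q}\varphi(w_q(z,r))\,h_q(z-qr)^{-1/2}\,dr$, with $\varphi(w)=(1-w)^{-1/2}-1$ and $w_q(z,r)=e^{-2r}h_q(z)/h_q(z-qr)$. Its derivative is controlled as follows.
\begin{itemize}
\item One has $\varphi(w)\le Cw$ for $w\le 1/2$, and $w_q(z,r)\le e^{-cr}$ for $r\le z/(2q)$ when $z\ge\overline a q$. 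Hence the $z$-derivative of the kernel is bounded by $Ch_q(z)^{-3/2}\bigl(r^{-1/2}\chi_{(0,1)}(r)+(1+r)e^{-cr}\chi_{[1,+\infty)}(r)\bigr)$.
\item The range $r\in(z/(2q),z/q]$ is controlled through bounds of the form $(1+z/q)^m e^{-z/q}$.
\item The boundary term at $r=z/q$ is at most $Ce^{-2z/q}h_q(z)q^{-3/2}$.
\end{itemize}
This gives $|R_q'(z)|\le Cq\,h_q(z)^{-3/2}$. Together with $|K'(z)-h_q(z)^{-1/2}|\le Cq(1-e^{-z})^{-3/2}$, it yields a relative error $Cq/h_q(z)\le C(1/a_\sigma+q_\sigma)$.

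Your instinct to compare with an unperturbed integral is right, but the comparison function has to be $\widetilde H_q$. You also have to prove the exponential smallness of the difference kernel away from $s=z$. As written, your proposal does not establish the second bound of the lemma.
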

		\begin{proof}
			Throughout, $C>0$ denotes a universal constant that may change from line to line.
			
			Observe first that, for every $q\in(0,1)$ and $z>0$, if we set 
			\begin{equation}
				\label{eq:Htilde}
				\widetilde H_q(z):=\int_0^z\frac{ds}{\sqrt{1-\frac2{2+q}e^{-s}}}=\int_0^z\frac{ds}{\sqrt{h_q(s)}}\,,
			\end{equation}
		then there exists $C>0$ such that
		\begin{equation}
			\label{eq:KHtilde}
			0\leq K(z)-\widetilde H_q(z)\leq C\sqrt q\,,\qquad\left|K'(z)-\widetilde H_q'(z)\right|\leq \frac{Cq}{(1-e^{-z})^{3/2}}\,.
		\end{equation}
		Indeed, the lower bound in the first estimate is immediate. As for the upper bound, it is enough to note that, setting $\eta_q:=q/(2+q)$, we have 
		\[
		\int_0^{\eta_q}\left|\frac{1}{\sqrt{1-\frac2{2+q}e^{-s}}}\right|\,ds+\int_0^{\eta_q}\left|\frac{1}{\sqrt{1-e^{-s}}}\right|\,ds\leq C \int_0^{\eta_q}\frac{ds}{\sqrt s}=2C\sqrt{\eta_q}\,,
		\]
		whereas, by the Mean Value Theorem applied to the function $x\mapsto x^{-1/2}$,
		\[
		 \int_{\eta_q}^1\left(\frac{1}{\sqrt{1-e^{-s}}}-\frac{1}{\sqrt{1-\frac2{2+q}e^{-s}}}\right)\,ds\leq C\int_{\eta_q}^1\frac{\eta_q}{s^{3/2}}\,ds\leq 2C\sqrt{\eta_q}
		\]
		and
		\[
		\int_1^{+\infty}\left(\frac{1}{\sqrt{1-e^{-s}}}-\frac{1}{\sqrt{1-\frac2{2+q}e^{-s}}}\right)\,ds\leq C\eta_q\int_1^{+\infty}e^{-s}\,ds\leq C\eta_q\,.
		\]
		All in all, this shows that $K(z)-\widetilde H_q(z)\leq C\sqrt{\eta_q}\leq C\sqrt q$ for every $q\in(0,1)$. The second estimate in \eqref{eq:KHtilde} follows analogously, noting that $\displaystyle K'(z)=(1-e^{-z})^{-1/2}$, $\displaystyle \widetilde H_q'(z)=h_q(z)^{-1/2}$, and by the Mean Value Theorem
		\[
		\left|K'(z)-\widetilde H_q'(z)\right|\leq \frac{C\eta_q}{(1-e^{-z})^{3/2}}\leq\frac{Cq}{(1-e^{-z})^{3/2}} \qquad\forall z>0\,.
		\]
		In view of \eqref{eq:KHtilde}, we claim that to complete the proof of Lemma \ref{lem:Hq} it is enough to show that there exist constants $\overline a>0$ and $\overline q\in(0,1)$ such that
		\begin{equation}
			\label{eq:Rq}
			0\leq  H_q(z)-\widetilde H_q(z)\leq\frac{Cq}{\sqrt{h_q(z)}}\,,\qquad\left|H_q'(z)-\widetilde H_q'(z)\right|\leq \frac{Cq}{h_q(z)^{3/2}}\,,\qquad\forall q\in(0,\overline q),\, z\geq \overline a q\,.
		\end{equation}
		Indeed, assume that \eqref{eq:Rq} holds. Fix $\sigma>0$ and let $a_\sigma\geq \overline a$ and $q_\sigma\in(0,\overline q)$ be two constants to be determined. For every $q\in(0,q_\sigma)$ and $z\geq a_\sigma q$,  we have 
		\begin{equation}
			\label{eq:q/r}
			\left|\frac{h_q(z)}{1-e^{-z}}-1\right|=\frac{qe^{-z}}{(2+q)(1-e^{-z})}\leq C \left(\frac1{a_\sigma}+q_\sigma\right)
		\end{equation}
		for some universal constant $C>0$. Moreover, by \eqref{eq:K} we have both $K(z)\geq 2\sqrt z$ (since $1-e^{-s}\leq s$ for every $s>0$) and $K(z)\geq z/\sqrt{1-e^{-z}}$ (since the map $s\mapsto (1-e^{-s})^{-1/2}$ is strictly decreasing on $(0,+\infty)$). Combining with \eqref{eq:KHtilde}, \eqref{eq:Rq} and \eqref{eq:q/r} then gives
		\[
	\frac{|H_q(z)-K(z)|}{K(z)}\leq \frac{C\sqrt q}{K(z)}+\frac{Cq}{K(z)\sqrt{h_q(z)}}\leq C\left(\sqrt\frac q z +\frac{q}{z}\sqrt\frac{1-e^{-z}}{h_q(z)}\right)\leq C\left(\frac1{\sqrt{a_\sigma}}+\frac1{a_\sigma}\right)
		\]
		for every $z\geq a_\sigma q$, namely the first estimate in \eqref{eq:HK} provided $a_\sigma$ is large enough. Furthermore, since $K'(z)=(1-e^{-z})^{-1/2}$, using again \eqref{eq:KHtilde}, \eqref{eq:Rq} and \eqref{eq:q/r} yields
		\[
		\frac{\left|H_q'(z)-K'(z)\right|}{K'(z)}\leq Cq\sqrt{1-e^{-z}}\left(\frac1{h_q(z)^{3/2}}+\frac1{(1-e^{-z})^{3/2}}\right)\leq \frac{Cq}{1-e^{-z}}\leq C \left(\frac1{a_\sigma}+q_\sigma\right)\,,
		\]
		that is the second estimate in \eqref{eq:HK} as soon as $a_\sigma$ is large and $q_\sigma$ is small. 
		
		To conclude, we are thus left to prove \eqref{eq:Rq}. That $H_q(z)\geq\widetilde H_q(z)$ for every $z>0$ follows directly by the definition of the two functions as in \eqref{eq:H} and \eqref{eq:Htilde}. 
		Let then $\overline a>0$ and $\overline q>0$ be constants to be determined, and set $R_q(z):=H_q(z)-\widetilde H_q(z)$. Recalling again \eqref{eq:H}, \eqref{eq:Htilde},  and changing the variable $r=(z-s)/q$, we can write
		\[
		R_q(z)=q\int_0^{z/q}A_q(z,r)\,dr\qquad\forall z\geq \overline a q,\, q \in(0,\overline q)\,,
		\]
		where
		\[
		A_q(z,r):=\frac{\varphi(w_q(z,r))}{\sqrt{h_q(z-qr)}}\,,\qquad \varphi(w)=(1-w)^{-1/2}-1\,,\qquad w_q(z,r):=e^{-2r}\frac{h_q(z)}{h_q(z-qr)}\,.
		\]
		Note that
		\begin{equation}
			\label{eq:varphi_est}
			\varphi(w)\leq C(1-w)^{-1/2}\quad \forall w\in(0,1),\qquad \varphi(w)\leq C_{w_0}w\quad\forall w\in(0,w_0]\,,
		\end{equation}
		where $C>0$ is a universal constant and $C_{w_0}>0$ depends only on $w_0\in(0,1)$. Differentiating $w_q(z,r)$ with respect to $r$ gives
		\begin{equation}
			\label{eq:derw_r}
		\partial_r w_q(z,r)=w_q(z,r)\left(-2+q\frac{h_q'(z-qr)}{h_q(z-qr)}\right)=-2w_q(z,r)\frac{1-e^{-(z-qr)}}{h_q(z-qr)}<0\qquad\forall r\in(0,z/q]\,,
		\end{equation}
		which, together with $w_q(z,0)=1$, gives $w_q(z,r)\in(0,1)$ for $r\in(0,z/q]$.
		Furthermore, since \eqref{eq:stimeh2} and $z\geq \overline a q$ ensure that $\displaystyle h_q(z/2)/h_q(z)\geq C$ for some $C>0$ provided $\overline a$ is large enough, together with \eqref{eq:derw_r} and the fact that $z-qr\in[z/2,z]$ as $r\in[0,z/(2q)]$ we obtain that $\partial_r w_q(z,r)\leq -Cw_{q}(z,r)$ for every $r\in[0,z/(2q)]$, with $C>0$ independent of $q,r,z$, in turn implying that
		\begin{equation}
		\label{eq:w1}
		e^{-2r}\leq w_q(z,r)\leq e^{-cr}\qquad\forall r\in[0,z/(2q)]
		\end{equation}
		for some universal constant $c>0$. Combining with \eqref{eq:varphi_est} then gives
		\[
		A_q(z,r)\leq \frac{C}{\sqrt{h_q(z)}}\left(r^{-1/2}\chi_{(0,1)}(r)+e^{-cr}\chi_{[1,+\infty)}(r)\right)\qquad\forall r\in(0,z/(2q)]\,,
		\]
		namely
		\begin{equation}
			\label{eq:Rq_0z/2q}
			q\int_0^{\frac z{2q}}A_q(z,r)\,dr\leq \frac{Cq}{\sqrt{h_q(z)}}\,.
		\end{equation}
		Conversely, if $r\in(z/(2q),z/q]$, then choosing $\overline a$ large enough \eqref{eq:w1} guarantees $\displaystyle w_q(z,r)\leq 1/2$, so that taking $w_0=1/2$ in \eqref{eq:varphi_est} gives
		\begin{equation}
		\label{eq:R2}
		\begin{split}
		q\int_{\frac{z}{2q}}^{\frac zq}A_q(z,r)\,dr\leq Cq \int_{\frac{z}{2q}}^{\frac zq} \frac{w_q(z,r)}{\sqrt{h_q(z-qr)}}\,dr&\,= Cqh_q(z)\int_{\frac{z}{2q}}^{\frac zq}\frac{e^{-2r}}{h_q(z-qr)^{3/2}}\,dr\\
		&\,=\frac{Cq}{\sqrt{h_q(z)}}\int_0^{\frac{z}{2q}}e^{-2(z/q-u)}\left(\frac{h_q(z)}{h_q(qu)}\right)^{3/2}\,du\,,
		\end{split}
		\end{equation}
		where we used the change of variable $u=z/q-r$ for the last identity. Note, however, that
		\begin{equation}
			\label{eq:sup_m}
			\sup_{\substack{q\in(0,\overline q) \\ z/q\geq \overline a}}\int_0^{\frac{z}{2q}}e^{-2(z/q-u)}\left(\frac{h_q(z)}{h_q(qu)}\right)^m\,du<+\infty\qquad\forall m>1\,.
		\end{equation}
		Indeed, by Remark \ref{rem:hq}, for every $q\in(0,1)$ and $u\geq0$ we have
		\[
		\begin{split}
		h_q(z)&\,\leq C\min\left\{1,q(1+z/q)\right\}\leq Cq(1+z/q)\,,\\
		h_q(qu)&\,\geq c\min\left\{1,q(1+u)\right\}\geq c\min\left\{1,q\right\}=cq\,,
		\end{split}
		\]
		entailing $\displaystyle h_q(z)/h_q(qu)\leq C(1+z/q)/c$, and thus 
		\[
		\int_0^{\frac{z}{2q}}e^{-2(z/q-u)}\left(\frac{h_q(z)}{h_q(qu)}\right)^m\,du\leq \frac{Cz(1+z/q)^me^{-z/q}}{2q}\,,
		\]
		that gives \eqref{eq:sup_m} since the right-hand side is uniformly bounded for $z/q\geq\overline a$. Therefore, applying \eqref{eq:sup_m} with $m=3/2$ in \eqref{eq:R2} leads to 
        \[
		q  \int_{\frac z{2q}}^{\frac{q}{z}} A_q(z,r)\,dr\leq \frac{Cq}{\sqrt{h_q(z)}}\,,
		\]
		that coupled with \eqref{eq:Rq_0z/2q} proves the first estimate in \eqref{eq:Rq}. 
		
		We then turn to the estimate for $R_q'(z)$. For fixed $r>0$, differentiating $w_q$ with respect to $z$ and using \eqref{eq:stimeh1} gives
		\begin{equation}
		\label{eq:derw_z}
		\partial_z w_q(z,r)=w_q(z,r)\left(\frac{h_q'(z)}{h_q(z)}-\frac{h_q'(z-qr)}{h_q(z-qr)}\right)=w_q(z,r)\left(\frac1{h_q(z)}-\frac1{h_q(z-qr)}\right)\,,
		\end{equation}
		whereas differentiating $A_q$ with respect to $z$ leads to
		\begin{equation}
			\label{eq:derA_z}
		\partial_z A_q(z,r)=-\frac{h_q'(z-qr)}{2h_q(z-qr)^{3/2}}\varphi(w_q(z,r))+\frac{\varphi'(w_q(z,r))\partial_z w_q(z,r)}{\sqrt{h_q(z-qr)}}\,.
		\end{equation}
		Since $\displaystyle 0\leq h_q(z)-h_q(z-qr)=\int_{z-qr}^z h_q'(s)\,ds\leq qr$, recalling that $h_q(z-qr)\geq h_q(z/2)\geq Ch_q(z) $ for every $r\in(0,z/(2q))$ and for some universal constant $C>0$, by \eqref{eq:derw_z} we obtain
		\[
		|\partial_z w_q(z,r)|\leq \frac{Cqr}{h_q(z)^2}w_q(z,r)\qquad\forall r\in(0,z/(2q))\,.
		\]
		Coupling with $\varphi'(w)=(1-w)^{-3/2}/2$, \eqref{eq:varphi_est}, \eqref{eq:w1} and \eqref{eq:derA_z} yields
		\begin{equation}
			|\partial_z A_q(z,r)|\leq\frac{C}{h_q(z)^{3/2}}\left(1+\frac{q}{h_q(z)}\right)\left(r^{-1/2}\chi_{(0,1)}(r)+(1+r)e^{-cr}\chi_{[1,+\infty)}(r)\right)
		\end{equation}
		for every $r\in(0,z/(2q)]$, that by \eqref{eq:stimeh2} and $z\geq \overline aq$ can be further simplified as
		\begin{equation}
			\label{eq:estdA_z1}
				|\partial_z A_q(z,r)|\leq\frac{C}{h_q(z)^{3/2}}\left(r^{-1/2}\chi_{(0,1)}(r)+(1+r)e^{-cr}\chi_{[1,+\infty)}(r)\right)\qquad\forall r\in(0,z/(2q)].
		\end{equation} 
		For $r\in(z/(2q),z/q]$, we already observed that $w_q(z,r)\leq1/2$ provided $\overline a$ is taken sufficiently large, so that \eqref{eq:varphi_est}, \eqref{eq:derw_z}, \eqref{eq:derA_z}, the fact that $h_q(z-qr)\leq h_q(z)$, and the definition of $w_q$ give 
		\begin{equation}
		\label{eq:estdA_z2}
		|\partial_z A_q(z,r)|\leq \frac{Ce^{-2r}h_q(z)}{h_q(z-qr)^{5/2}}\qquad\forall r\in(z/(2q),z/q]\,.
		\end{equation}
		Let now $z_0>q\overline a$ be fixed and take $\delta>0$ such that $z_0-\delta>\overline aq$. For every $s\in(-\delta,\delta)$, the definition of $R_q$ gives
		\[
		\frac{R_q(z_0+s)-R_q(z_0)}{s}=q\int_{0}^{\frac{z_0}{q}}\frac{A_q(z_0+s,r)-A_q(z_0,r)}{s}\,dr+\frac qs\int_{\frac{z_0}q}^{\frac{z_0+s}q}A_q(z_0+s,r)\,dr\,.
		\]
Since \eqref{eq:estdA_z1} and \eqref{eq:estdA_z2} show that $\partial_z A_q(z,r)$ is dominated by an integrable function of $r$ on $[0,z/q]$ uniformly in $z\in(z_0-\delta,z_0+\delta)$, the Dominated Convergence Theorem implies
		\[
		\lim_{s\to0}\int_{0}^{\frac{z_0}q}\frac{A_q(z_0+s,r)-A_q(z_0,r)}{s}\,dr=\int_0^{\frac{z_0}q}\partial_z A_q(z_0,r)\,dr\,,
		\]
		whereas the continuity of $A_q$ guarantees that
		\[
		\lim_{s\to0}\frac qs\int_{\frac{z_0}q}^{\frac{z_0+s}q}A_q(z_0+s,r)\,dr=\lim_{s\to0}\frac{1}{\frac{z_0+s}q-\frac{z_0}q}\int_{\frac{z_0}q}^{\frac{z_0+s}q}A_q(z_0+s,r)\,dr=A_q(z_0,z_0/q)\,.
		\] 
		All in all, this shows that 
		\begin{equation}
		\label{eq:R'}
		R_q'(z_0)=q\int_0^{\frac{z_0}q}\partial_z A_q(z_0,r)\,dr+A_q(z_0,z_0/q)\,.
		\end{equation}
		By \eqref{eq:estdA_z1} and \eqref{eq:estdA_z2} we then have
		\begin{equation}
			\label{eq:R'1}
		\begin{split}
			\left|q\int_0^{z_0/q}\partial_z A_q(z_0,r)\,dr\right|\leq&\,\frac{Cq}{h_q(z_0)^{3/2}} \int_0^{\frac{z_0}{2q}}|r^{-1/2}\chi_{(0,1)}(r)+(1+r)e^{-cr}\chi_{[1,+\infty)}(r)|\,dr\\
			&\qquad+\frac{Cq}{h_q(z_0)^{3/2}}\int_{\frac{z_0}{2q}}^{\frac{z_0}q}e^{-2r}\left(\frac{h_q(z_0)}{h_q(z_0-qr)}\right)^{5/2}\,dr\\
			\leq&\,\frac{Cq}{h_q(z_0)^{3/2}}\,,
		\end{split}
		\end{equation}
		where the second integral has been bounded relying on the change of variable $u=z_0/q-r$ and applying \eqref{eq:sup_m} with $m=5/2$. Moreover, by the definition of $A_q$ and $w_q$, the fact that $w_q(z_0,z_0/q)\leq1/2$, and \eqref{eq:varphi_est}, we also have
		\begin{equation}
			\label{eq:R'2}
		A_q(z_0,z_0/q)\leq \frac{Ce^{-2z_0/q}h_q(z_0)}{h_q(0)^{3/2}}\leq \frac{Ce^{-2z_0/q}h_q(z_0)}{q^{3/2}}\leq \frac{Cq}{h_q(z_0)^{3/2}}\,,
		\end{equation}
		because by \eqref{eq:stimeh2} one has $e^{-2z_0/q}(h_q(z_0)/q)^{5/2}\leq C(z_0/q+1)^{5/2}e^{-2z_0/q}$, which is bounded uniformly in $z_0\geq\overline a q$. Coupling \eqref{eq:R'}, \eqref{eq:R'1} and \eqref{eq:R'2} gives the second estimate of \eqref{eq:Rq} and completes the proof of the lemma.
		\end{proof}
		
		We address now the second integral $I_{q,2}$ in \eqref{eq:splitL}.

		\begin{lemma}
			\label{lem:I2}
			Given $a,z$ as in \eqref{eq:az}, set $U_q(z):=\sqrt q I_{q,2}(e^{-z/q})$. Then there exist $\overline a>0$, $\overline q>0$, and $C>0$ such that, for every $(q,a)\in(0,\overline q)\times(\overline a,+\infty)$, it holds
			\begin{equation}
			\label{eq:U}
			|U_q(z)|\leq C\,,\qquad |U_q'(z)|\leq \frac{Ce^{-2a}(1-e^{-z})}{q^{2}}\qquad\forall z>0.
			\end{equation}
		\end{lemma}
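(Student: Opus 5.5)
The plan is to remove the dependence on $\alpha$ through a logarithmic change of variables and to reduce $U_q$ to an integral with a single square-root singularity, which is located at the moving endpoint $\beta_{2+q}(\alpha)$. Write $t=e^{\sigma}$ with $\sigma\in[0,\log\beta_{2+q}(\alpha)]$. Then
\[
2F_{2+q}(t)=e^{2\sigma}\Big(1-\tfrac{2}{2+q}e^{q\sigma}\Big)=e^{2\sigma}h_q(-q\sigma),
\]
and, with $\alpha=e^{-z/q}$, also $2F_{2+q}(\alpha)=e^{-2z/q}h_q(z)=e^{-2a}h_q(z)$, with $h_q$ as in \eqref{eq:hq}. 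Dividing numerator and denominator by $\sqrt q$ gives
\[
U_q(z)=\int_0^{\sigma_\beta}\frac{d\sigma}{\sqrt{G_\varepsilon(\sigma)}},\qquad G_\varepsilon(\sigma):=g_q(\sigma)-\varepsilon e^{-2\sigma},\qquad g_q(\sigma):=\frac{h_q(-q\sigma)}{q},\qquad \varepsilon:=\frac{e^{-2a}h_q(z)}{q}.
\]
Here $\sigma_\beta=\log\beta_{2+q}(\alpha)$ is the first positive zero of $G_\varepsilon$. In this form all the dependence on $z$ is carried by the single scalar $\varepsilon=\varepsilon(q,z)$.

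First I will collect the elementary facts about $G_\varepsilon$:
\begin{itemize}
\item $g_q(0)=1/(2+q)$.
\item $g_q'(\sigma)=-\frac{2}{2+q}e^{q\sigma}$.
\item $\sigma_\beta\le\log\beta_{2+q}(0)$, and this is bounded by $1$ for small $q$, since $\beta^q\le (2+q)/2$.
\item By \eqref{eq:stimeh2}, $\varepsilon\le Ce^{-2a}(1+a)$.
\end{itemize}
Choosing $\overline a$ large, $\varepsilon\le 1/10$. Then
\[
G_\varepsilon'(\sigma)=g_q'(\sigma)+2\varepsilon e^{-2\sigma}\le -\tfrac12\quad\text{on }[0,\sigma_\beta],\qquad G_\varepsilon(0)\in\big[\tfrac14,\tfrac12\big],
\]
uniformly in $q\in(0,\overline q)$. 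The key step is the substitution $y=G_\varepsilon(\sigma)$, which is legitimate because $G_\varepsilon$ is strictly decreasing. It gives
\[
U_q=\int_0^{G_\varepsilon(0)}\frac{dy}{\sqrt y\,|G_\varepsilon'(\sigma_\varepsilon(y))|}.
\]
Since $|G_\varepsilon'|\ge 1/2$, this yields immediately $|U_q|\le 4\sqrt{G_\varepsilon(0)}\le C$.

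For the derivative I will write $U_q'(z)=\partial_\varepsilon U\cdot\partial_z\varepsilon$. In the $y$-representation the singularity $y^{-1/2}$ no longer moves, so differentiating in $\varepsilon$ only produces the following terms:
\begin{itemize}
\item a boundary term $-\partial_\varepsilon G_\varepsilon(0)\,/\big(\sqrt{G_\varepsilon(0)}\,|G_\varepsilon'(0)|\big)$;
\item the derivative of $1/|G'_\varepsilon(\sigma_\varepsilon(y))|$, using $\partial_\varepsilon\sigma_\varepsilon(y)=e^{-2\sigma}/G_\varepsilon'$ and the smoothness of $g_q$ on $[0,1]$ with bounded second derivative.
\end{itemize}
Both are bounded by a universal constant, so $|\partial_\varepsilon U|\le C$. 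It then remains to bound
\[
\partial_z\varepsilon=e^{-2a}\Big(-\frac{2h_q(z)}{q^2}+\frac{h_q'(z)}{q}\Big),
\]
where I use $\partial_z a=1/q$. For the first term, \eqref{eq:stimeh2} gives $h_q(z)\le C\min\{1,q+z\}$, and $q+z\le 2z$ because $z=qa\ge q\overline a$; hence $h_q(z)\le C(1-e^{-z})$. For the second term, $h_q'(z)\le e^{-z}$, and $qe^{-z}\le 1-e^{-z}$ when $z\ge q\overline a$ with $\overline a$ large, because $1-e^{-z}\ge c\min\{1,z\}\ge c\overline a q$. Together these give $|\partial_z\varepsilon|\le Ce^{-2a}(1-e^{-z})/q^2$, which is the second estimate in \eqref{eq:U}.

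The main obstacle I expect is making the $\varepsilon$-differentiation rigorous and uniform in $q$. The endpoint $\sigma_\beta$ depends on $\varepsilon$, and the integrand blows up there, so naive differentiation under the integral fails. The substitution $y=G_\varepsilon(\sigma)$, combined with the uniform bound $|G_\varepsilon'|\ge 1/2$, is exactly what handles this. The points needing care are the uniform bounds on $g_q''$ and on $\sigma_\beta\le 1$ for $q$ small, and ensuring that $\varepsilon$ stays small, which is where the condition $a>\overline a$ enters.
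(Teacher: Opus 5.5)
Your proposal is correct and follows essentially the same route as the paper. Your $\varepsilon$ is exactly $2\Phi_q(\alpha)=2E_q(z)$, and, as in the paper, you show that $U_q$ is a $C^1$ function of this single energy level, with bounds uniform in $q$ once the moving square-root singularity is removed, and then conclude by the chain rule. The only difference is the desingularization: you use $t=e^\sigma$, the substitution $y=G_\varepsilon(\sigma)$ and the explicit bound $G_\varepsilon'\le-\tfrac12$, whereas the paper uses $t=b_q(E)-(b_q(E)-1)y^2$ and a compactness argument giving $D_q\ge c$. Your term-by-term estimate of $\partial_z\varepsilon$ is unnecessary: since $-2h_q(z)+qh_q'(z)=-2(1-e^{-z})$, one has exactly $\partial_z\varepsilon=-2e^{-2a}(1-e^{-z})/q^2$.
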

		\begin{proof}
			We introduce the rescaled function
			\begin{equation}
				\label{eq:Phi}
				\Phi_q(t):=\frac{F_{2+q}(t)}{q}=\frac1q\left(\frac{t^2}{2}-\frac{t^{2+q}}{2+q}\right),\qquad t\geq0\,,
			\end{equation}
			which as $q\to0^+$ converges uniformly in $C^2$ on compact subsets of $(0,+\infty)$ to the function $\displaystyle\Phi(t):=\frac{t^2}{4}-\frac{t^2\log t}2$. Setting $b_q:=(1+q/2)^{1/q}$, we have $\Phi_q(b_q)=0$, $b_q\to\sqrt e$ as $q\to0^+$, and $\Phi_q'(b_q)=-b_q/2\leq-C$, for some universal constant $C>0$ and for every $q$ in a right neighborhood of $0$. By the convergence of $\Phi_q$ to $\Phi$, it then follows that there exist $q_0>0$ and $E_0>0$ such that, for every $(q,E)\in[0,q_0]\times[0,E_0]$, the equation $\Phi_q(t)=E$ has a unique solution $b_q(E)>1$, such that $b_q\in C^1(0,E_0)$ and $\Phi_q'(b_q(E))\leq-C<0$. Letting $t=b_q(E)-(b_q(E)-1)y^2$ for $y\in[0,1]$, we can rewrite
			\[
			\begin{split}
				\Phi_q(t)-E&\,=\Phi_q(b_q(E)-(b_q(E)-1)y^2)-\Phi_q(b_q(E))=:y^2 D_q(E,y)\,,
			\end{split}
			\]
			where $\displaystyle D_q(E,y):=-(b_q(E)-1)\int_0^1\Phi_q'(b_q(E)-\tau (b_q(E)-1)y^2)\,d\tau$ is well-defined for every $(q,E,y)\in[0,q_0]\times[0,E_0]\times[0,1]$. Moreover, $D_q\in C^1([0,E_0]\times[0,1])$, because $b_q\in C^1(0,E_0)$ by construction and the argument of $\Phi_q'$ in the integrand is bounded away from zero uniformly on $[0,q_0]\times[0,E_0]\times[0,1]$. Furthermore, since $b_q(E)-1>0$, the monotonicity of $\Phi_q$ on $(1,b_q(E))$ ensures that $D_q(E,y)>0$ for every $y>0$, whereas $D_q(E,0)=-(b_q(E)-1)\Phi_q'(b_q(E))\geq C>0$ uniformly on $[0,q_0]\times[0,E_0]$. Hence, by compactness there exists $c>0$ such that $D_q(E,y)\geq c$ uniformly on $[0,q_0]\times[0,E_0]\times[0,1]$, ensuring that there exists $C>0$ such that
			\begin{equation}
				\label{eq:D}
			\left|\int_0^1\frac{dy}{\sqrt{D_q(E,y)}}\right|+\left|\partial_E\int_0^1\frac{dy}{\sqrt{D_q(E,y)}}\right|\leq C
			\end{equation}
			uniformly on $[0,q_0]\times[0,E_0]$.
			
			Since 
			\begin{equation}
			\label{eq:I2new}
			\sqrt qI_{q,2}(\alpha)=\int_1^{b_q(\Phi_q(\alpha))}\frac{dt}{\sqrt{2\left(\Phi_{q}(t)-\Phi_{q}(\alpha)\right)}}=\sqrt2 (b_q(\Phi_q(\alpha))-1)\int_0^1\frac{dy}{\sqrt{D_q(\Phi_q(\alpha),y)}}\,,
			\end{equation}
			and by \eqref{eq:az}
			\[
			\Phi_q(\alpha)=\Phi_q(e^{-z/q})=:E_q(z)\,,
			\]
			so that
			\[
			E_q'(z)=-\frac{e^{-z/q}}{q}\Phi_q'(e^{-z/q})=-\frac{e^{-2z/q}-e^{-(q+2)z/q}}{q^2}=-\frac{e^{-2a}(1-e^{-z})}{q^2}\,,
			\]
			\eqref{eq:U} follows.
		\end{proof}
	
		Coupling Lemmas \ref{lem:Hq}--\ref{lem:I2} yields the first estimate in \eqref{eq:boundLN}.
		\begin{corollary}
			\label{cor:L}
			There exist $\overline q>0$, $\alpha_0>0$ and $C>0$ such that $\displaystyle \frac{L_{2+q}'(\alpha)}{L_{2+q}(\alpha)}\leq-C$ for every $(q,\alpha)\in(0,\overline q]\times(0,\alpha_0]$. 
		\end{corollary}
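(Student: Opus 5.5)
We work in the variables $a=-\log\alpha$ and $z=qa$ of \eqref{eq:az}. Since $d z/d\alpha=-q/\alpha$, the splitting \eqref{eq:splitL} gives
\[
L_{2+q}(\alpha)=\frac{H_q(z)}q+\frac{U_q(z)}{\sqrt q},\qquad L_{2+q}'(\alpha)=-\frac{q}{\alpha}\left(\frac{H_q'(z)}{q}+\frac{U_q'(z)}{\sqrt q}\right)=-\frac1\alpha\left(H_q'(z)+\sqrt q\,U_q'(z)\right).
\]
Hence
\[
\frac{L_{2+q}'(\alpha)}{L_{2+q}(\alpha)}=-\frac{q}{\alpha}\,\frac{H_q'(z)+\sqrt q\,U_q'(z)}{H_q(z)+\sqrt q\,U_q(z)}.
\]
The plan is to show that this is of order $-q/(\alpha K(z))\cdot K'(z)$, which blows up in the favourable direction as $\alpha\to0$. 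In particular it is bounded above by a negative constant.

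We fix $\sigma=1/2$ in Lemma \ref{lem:Hq} and take $a\geq\max\{a_\sigma,\overline a\}$ and $q<\min\{q_\sigma,\overline q\}$, with $\overline a,\overline q$ as in Lemma \ref{lem:I2}. Then $z=qa\geq a_\sigma q$, so Lemma \ref{lem:Hq} gives
\[
H_q(z)\ge \tfrac12K(z)>0,\qquad H_q'(z)\geq\tfrac12K'(z)=\tfrac12(1-e^{-z})^{-1/2}\geq\tfrac12 .
\]
Lemma \ref{lem:I2} gives $|U_q|\le C$ and $\sqrt q|U_q'(z)|\le Ce^{-2a}(1-e^{-z})q^{-3/2}$.

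For the numerator, we use $e^{-2a}=\alpha^2$. Since $a\geq\overline a$ and $1-e^{-z}\le z=qa$, we get
\[
\sqrt q|U_q'|\le C\alpha^2 a\,q^{-1/2}.
\]
This is not uniformly small as $q\to0$, which is the delicate point. To handle it, we do not need the numerator small; we only need it positive, or at least nonnegative. We therefore compare signs directly. $E_q(z)=\Phi_q(\alpha)$ decreases in $z$, and $b_q(E)$ is increasing in $E$ because $\Phi_q'(b_q)<0$. So the factor $b_q(E)-1$ in \eqref{eq:I2new} decreases as $z$ increases, while the $y$-integral factor has bounded $E$-derivative. In the regime $\alpha\le\alpha_0$ we have $E_q$ small and $b_q(E)-1\ge c$. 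This yields $U_q'(z)\ge -C|E_q'(z)|\cdot(\text{bounded})$, so the lower bound still carries the factor $\alpha^2aq^{-2}$.

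We therefore add this to the main term. The $H$-contribution to $L'_{2+q}$ is
\[
-\frac1\alpha H_q'(z)\le-\frac1{2\alpha},
\]
while the $U$-contribution is
\[
\frac{1}{\alpha}\sqrt q\,|U_q'|\le C\alpha\,a\,q^{-1/2}.
\]
The $U$-contribution is dominated by $1/(4\alpha)$ only when $\alpha^2 a\ll\sqrt q$, which fails for fixed $\alpha$ as $q\to0$. So a pure perturbative comparison is insufficient. The fix I would try first is to restrict to $\alpha\le\alpha_0(q)$, or better, to exploit that $\alpha=e^{-z/q}$. For $z\geq z_0>0$ fixed, $\alpha^2a=e^{-2z/q}z/q$, which is superexponentially small in $1/q$, so the bad term is negligible. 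For $z\le z_0$, i.e. $\alpha\in[e^{-z_0/q},e^{-a_\sigma}]$, we have $1-e^{-z}\le z$, and here we use the sharper numerator bound. In this range $K(z)\sim2\sqrt z$ and $H_q\sim 2\sqrt z$, so $L\approx 2\sqrt z/q$ and the ratio $L'/L\approx-(1/\alpha)K'/(K/q)\approx -q/(2\alpha z)=-1/(2\alpha a)$. This is large and negative since $\alpha a=ae^{-a}\to0$. Against this, the $U$ term relative to $L$ is
\[
\frac{C\alpha a q^{-1/2}}{\sqrt z/q}=C\alpha\sqrt a\,,
\]
which is small. Hence the ratio is bounded by $-1/(4\alpha a)\le -C$ once $\alpha_0$ is small.

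The same comparison organises the whole proof. Dividing numerator and denominator by $L\geq H_q/q\ge K(z)/(2q)$, the bad term in $L'/L$ is always at most
\[
\frac{C\alpha a q^{-1/2}}{K(z)/q}\le C\alpha\sqrt{aq}\,\le C\alpha\sqrt a,
\]
using $K(z)\ge 2\sqrt z=2\sqrt{qa}$. Meanwhile the main term is at most
\[
-\frac{q}{\alpha}\cdot\frac{K'(z)/2}{2K(z)+C\sqrt q}\le -\frac{q}{8\alpha\max\{K(z),\sqrt q\}}.
\]
Since $K(z)\le 2\sqrt z+z$, we have $q/K(z)\gtrsim\min\{\sqrt{q/a}\cdot\sqrt q,1/a\}\gtrsim q/(\sqrt{a}\,\sqrt{q}+qa)$. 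So the main term is $\lesssim-\min\{\sqrt q/(\alpha\sqrt a),1/(\alpha a)\}$. The step I expect to be the real obstacle is precisely this comparison when $\sqrt q/(\alpha\sqrt a)$ is the minimum, since it must still beat both the constant $C$ and the error $C\alpha\sqrt a$. I would handle it by choosing $\alpha_0$ such that $\alpha\sqrt a\le\sqrt q$ is not needed: when $z\ge1$, the error is $O(e^{-2a}a/q^{3/2})\cdot q/K$ with $K\ge z$, which tends to zero uniformly. This yields the stated negative bound $L'/L\le-C$ on $(0,\overline q]\times(0,\alpha_0]$ after shrinking $\overline q$ and $\alpha_0$.
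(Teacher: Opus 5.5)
Your proposal is correct in substance. It rests on the same skeleton as the paper: the identity $qL_{2+q}(\alpha)=H_q(z)+\sqrt q\,U_q(z)$, the resulting formula $L_{2+q}'/L_{2+q}=-\frac q\alpha\,\frac{H_q'+\sqrt q\,U_q'}{H_q+\sqrt q\,U_q}$, and Lemmas \ref{lem:Hq} and \ref{lem:I2}. The difference is in the last step, where your route is longer than it needs to be.

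The ``delicate point'' only appears because you compare $\sqrt q\,U_q'$ with the constant lower bound $\tfrac12$ for $H_q'$. Compare it with $K'(z)=(1-e^{-z})^{-1/2}$ instead. Then Lemma \ref{lem:I2} gives $\sqrt q\,|U_q'(z)|/K'(z)\le Ce^{-2a}\bigl((1-e^{-z})/q\bigr)^{3/2}\le Ce^{-2a}a^{3/2}$, which is uniformly small. Together with $\sqrt q\,|U_q|/K(z)\le C/\sqrt a$, the numerator and denominator are within factors $1\pm2\sigma$ of $K'(z)$ and $K(z)$. The paper then writes $-q/\alpha=z/(\alpha\log\alpha)$ and uses the elementary inequality $zK'(z)/K(z)\ge\frac12$, which follows from concavity of $s\mapsto1-e^{-s}$. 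This gives $L_{2+q}'/L_{2+q}\le c/(\alpha\log\alpha)=-c/(\alpha a)\le-ce$ in one stroke, with no case distinction.

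You instead bound the $U$-contribution to $L'/L$ additively by $C\alpha\sqrt a$. That bound is correct, though the intermediate $C\alpha\sqrt{aq}$ is a slip. You then split into $z\le z_0$, where $K'/K\gtrsim 1/z$, and $z\ge z_0$, where $K\lesssim z$. Both regimes give a main term $\lesssim-1/(\alpha a)$, and with $z_0=1$ they cover everything.

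Your final paragraph is muddled: it names the obstacle in the regime $z\le1$ (after discarding $K'$) but then argues for $z\ge1$. The regime $z\le 1$ is actually closed by your earlier computation. That computation becomes rigorous via $K(z)\le 2e^{z_0/2}\sqrt z$ and $K'(z)\ge z^{-1/2}$ for $z\le z_0$. More simply, inserting $zK'/K\ge\frac12$ into your additive scheme removes the split altogether.

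Your sign discussion is wrong. Since $b_q'(E)=1/\Phi_q'(b_q(E))<0$, the quantity $b_q(E_q(z))-1$ increases with $z$, not decreases. You fall back on $|U_q'|\le C|E_q'(z)|$ anyway, so the error is harmless, but that paragraph should be deleted.
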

		\begin{proof}
			Fix $\sigma>0$ small and take $a_\sigma, q_\sigma$ as in Lemma \ref{lem:Hq}. By Lemma \ref{lem:I2},  $K(z)\geq2\sqrt z=2\sqrt{qa}$, and $K'(z)=(1-e^{-z})^{-1/2}=(1-e^{-qa})^{-1/2}$, we have
			\[
			\frac{\sqrt q|U_q(z)|}{K(z)}\leq \frac{C}{\sqrt a}\leq\frac{C}{\sqrt{a_\sigma}}\,,\qquad\frac{\sqrt q|U_q'(z)|}{K'(z)}\leq Ce^{-2a}\left(\frac{1-e^{-z}}{q}\right)^{3/2}\leq Ce^{-2a_\sigma}a_\sigma^{3/2}
			\]
			for some universal constant $C>0$ and for every $a\geq a_\sigma$ (for the second estimate we used the elementary inequality $1-e^{-qa}\leq qa$ and the monotonicity of $s\mapsto e^{-2s}s^{3/2}$ for large $s$). Observe that the right hand sides of both estimates can be made arbitrarily small by enlarging $a_\sigma$, so that by Lemma \ref{lem:Hq} we obtain
			\begin{equation}
			\label{eq:finalHU}
			\left|\frac{H_q(z)+\sqrt q U_q(z)}{K(z)}-1\right|\leq 2\sigma\,,\qquad\left|\frac{H_q'(z)+\sqrt q U_q'(z)}{K'(z)}-1\right|\leq 2\sigma
			\end{equation}
			for every $(q,a)\in(0,q_\sigma]\times [a_\sigma,+\infty)$ (recall that here $z=qa$ by \eqref{eq:az}, so that the condition $z\geq a_\sigma q$ of Lemma \ref{lem:Hq} is automatically satisfied). Since $H_q(z)+\sqrt q U_q(z)=q L_{2+q}(e^{-z/q})$ by \eqref{eq:splitL} and $z=qa=-q\log\alpha$, we also have $\displaystyle H_q'(z)+\sqrt q U_q'(z)=-\alpha L_{2+q}'(\alpha)$. Therefore, \eqref{eq:finalHU} gives
			\[
			\begin{split}
			\frac{L_{2+q}'(\alpha)}{L_{2+q}(\alpha)}&\,=-\frac q\alpha\frac{H_q'(z)+\sqrt q U_q'(z)}{H_q(z)+\sqrt q U_q(z)}=\frac{1}{\alpha\log\alpha}\frac{z\left(H_q'(z)+\sqrt q U_q'(z)\right)}{H_q(z)+\sqrt q U_q(z)}\\
			&\,\leq \frac1{\alpha\log\alpha}\frac{1-2\sigma}{1+2\sigma}\frac{zK'(z)}{K(z)}\leq\frac1{\alpha\log\alpha}\frac{1-2\sigma}{2(1+2\sigma)}\,.
			\end{split}
			\]
			The last bound follows by the concavity of the map $s\mapsto1-e^{-s}$, implying $\displaystyle \frac{\sqrt{z}}{\sqrt{s}\sqrt{1-e^{-z}}}\geq\frac{1}{\sqrt{1-e^{-s}}}$ for every $0<s\leq z$, which integrating over $[0,z]$ leads to $zK'(z)/K(z)\geq1/2$ for every $z>0$. 
			
			Since $\sigma$ is fixed so small that $1-2\sigma>0$ and the previous bound is uniform in $q\in(0,q_\sigma]$, it shows that $\displaystyle \frac{L_{2+q}'(\alpha)}{L_{2+q}(\alpha)}$ is bounded from above by a negative constant on every interval of the form $(0,\alpha_0]$, for any given small $\alpha_0\in(0,1)$. 
		\end{proof}
		We are now in position to prove Proposition \ref{prop:a=0}.
		
		\begin{proof}[Proof of Proposition \ref{prop:a=0}]
			In view of Corollary \ref{cor:L}, we are left to establish the second estimate in \eqref{eq:boundLN}. To this end, fix $\delta\in(0,1)$ and set
			\begin{equation}
				\label{eq:nq}
			n_q(\alpha):=\int_\alpha^{\beta_{2+q}(\alpha)}\frac{t^2dt}{\sqrt{2(\Phi_q(t)-\Phi_q(\alpha))}}=\sqrt q N_{2+q}(\alpha)
			\end{equation}
			and 
			\[
			n_q^-(\alpha):=\int_\alpha^{\delta}\frac{t^2dt}{\sqrt{2(\Phi_q(t)-\Phi_q(\alpha))}}\,,\qquad n_q^+(\alpha):=\int_\delta^{\beta_{2+q}(\alpha)}\frac{t^2dt}{\sqrt{2(\Phi_q(t)-\Phi_q(\alpha))}}\,.
			\]
			The proof of Corollary \ref{cor:L} ensures that we can take $\alpha_0<\delta/2$.  We will now show that $n_q$ is bounded away from zero and $n_q'$ is bounded uniformly in $(q,\alpha)\in(0,\overline q]\times(0,\alpha_0]$. Since by definition $\displaystyle n_q'(\alpha)/n_q(\alpha) = N_{2+q}'(\alpha)/N_{2+q}(\alpha)$, this will conclude the proof of Proposition \ref{prop:a=0}.
			
			We begin with $n_q^-$. Set
			\[
			t=\sqrt{\alpha^2+y^2}\,,
			\qquad
			Y(\alpha):=\sqrt{\delta^2-\alpha^2}\,,
			\]
			and 
			\begin{equation}\label{eq:Bcal-def}
				B_{q,\alpha}(y)
				:=\frac{2(\Phi_q(\sqrt{\alpha^2+y^2})-\Phi_q(\alpha))}{y^2}
				=\int_0^1
				\frac{1-(\alpha^2+\tau y^2)^{q/2}}q\,\dd\tau,
			\end{equation}
			where the first formula gives a rigorous definition for every $y>0$, but the second identity extends it by continuity also to $y=0$. 
			Since \(\sqrt{\alpha^2+y^2}\le\delta\) for $y\in(0,Y(\alpha)]$, 
			\begin{equation}\label{eq:Bcal-lower}
				B_{q,\alpha}(y)
				\ge\frac{1-\delta^q}{q}\ge c_\delta>0
			\end{equation}
			for all sufficiently small \(q>0\), so that changing the variable in the definition of $n_q^-$ yields
			\begin{equation}\label{eq:nminus-transformed}
				n_q^-(\alpha)
				=\int_0^{Y(\alpha)}
				\frac{\sqrt{\alpha^2+y^2}}{\sqrt{B_{q,\alpha}(y)}}\,\dd y
			\end{equation}
			and shows that \(n_q^-\) is bounded on $(0,\overline q]\times(0,\alpha_0]$.
			Moreover, differentiating \eqref{eq:Bcal-def} entails
			\begin{equation}\label{eq:Bcal-alpha}
				\partial_\alpha B_{q,\alpha}(y)
				=-\alpha\int_0^1(\alpha^2+\tau y^2)^{q/2-1}\,\dd\tau\,,
			\end{equation}
			so that differentiating \eqref{eq:nminus-transformed} and using \eqref{eq:Bcal-lower} gives
			\begin{equation}
					\label{eq:nminus-derivative-bound}
				|(n_q^-)'(\alpha)|\le C\left(\alpha
				+\int_0^{Y(\alpha)}\frac{\alpha}{\sqrt{\alpha^2+y^2}}\,\dd y
				+\alpha\int_0^{Y(\alpha)}\sqrt{\alpha^2+y^2}
				\int_0^1(\alpha^2+\tau y^2)^{q/2-1}\,\dd\tau \dd y\right),
			\end{equation}
			where the first term on the right hand side comes from $\displaystyle Y'(\alpha)\delta/\sqrt{B_{q,\alpha}(Y(\alpha))}$, which is bounded because
			$Y(\alpha)\ge\sqrt{3}\delta/2$ whenever $\alpha\leq\alpha_0<\delta/2$. Since 
			\[
			\int_0^{Y(\alpha)}\frac{\alpha}{\sqrt{\alpha^2+y^2}}\,\dd y=\alpha\operatorname{arcsinh}\frac{Y(\alpha)}\alpha
			\le C\alpha(1+|\log\alpha|),
			\]
			that is uniformly bounded, it remains to estimate the last integral in \eqref{eq:nminus-derivative-bound}.
			For \(0\le y\le\alpha\), since $q$ is small
			\[
			\int_0^1(\alpha^2+\tau y^2)^{q/2-1}\dd\tau
			\le\alpha^{q-2},
			\]
			and hence 
			\[
			\int_0^\alpha\sqrt{\alpha^2+y^2}
			\int_0^1(\alpha^2+\tau y^2)^{q/2-1}\dd\tau\dd y\leq C\alpha^{q}\leq C\,.
			\]
			For \(y\ge\alpha\), the identity
			\begin{equation}\label{eq:inner-integral-exact}
				\int_0^1(\alpha^2+\tau y^2)^{q/2-1}\,\dd\tau
				=\frac{2\bigl((\alpha^2+y^2)^{q/2}-\alpha^q\bigr)}{qy^2}
			\end{equation}
			combined with
			\[
			\frac{T^q-\alpha^q}{q}
			=\int_\alpha^T r^{q-1}\dd r
			\le\int_\alpha^T\frac{\dd r}{r}
			=\log\frac T\alpha,
			\qquad T\le\delta<1,
			\]
			gives
			\[
			\int_0^1(\alpha^2+\tau y^2)^{q/2-1}\dd\tau
			\le\frac{C}{y^2}\log\frac{Cy}{\alpha}\,,
			\]
			so that
			\[
			\alpha\int_\alpha^{Y(\alpha)}\sqrt{\alpha^2+y^2}
			\int_0^1(\alpha^2+\tau y^2)^{q/2-1}\dd\tau\dd y\leq C\alpha\int_\alpha^\delta\frac{1}{y}
			\log\frac{Cy}{\alpha}\dd y
			\le C\alpha\left(1+\log^2\frac C\alpha\right),
			\]
			which is uniformly bounded on \((0,\alpha_0]\).  This shows that
			\begin{equation}\label{eq:nminus-C1}
				\sup_{\substack{0<q<q_0\\0<\alpha\le\alpha_0}}
				\bigl(|n_q^-(\alpha)|+|(n_q^-)'(\alpha)|\bigr)<\infty,
			\end{equation}
			while that 
			\begin{equation}\label{eq:nplus-C1}
				\sup_{\substack{0<q<q_0\\0<\alpha\le\alpha_0}}
				\bigl(|n_q^+(\alpha)|+|(n_q^+)'(\alpha)|\bigr)<\infty.
			\end{equation}
			can be proved arguing exactly as in the proof of Lemma \ref{lem:I2}. This shows that both $n_q$ and $n_q'$ are bounded uniformly on $(0,\overline q]\times(0,\alpha_0]$. As for the boundedness away from zero of $n_q(\alpha)$ it is enough to observe that $\Phi_q$ is positive on $[\delta,1]$ for every $q\in(0,\overline q]$, while $\Phi_q(\alpha)$ is uniformly small on $(0,\overline q]\times(0,\alpha_0]$, provided $\alpha_0$ is small enough. Hence, 
			\[
			n_q(\alpha)\geq\int_\delta^1\frac{t^2dt}{\sqrt{2(\Phi_q(t)-\Phi_q(\alpha))}}\geq C>0
			\]
			for some universal constant $C$ and for every $(q,\alpha)\in(0,\overline q]\times(0,\alpha_0]$. 
			
			Since both estimates in \eqref{eq:boundLN} have now been established, by \eqref{eq:derlog} we have
			\[
			(\log M_{2+q}(\alpha))'\leq -C\left(\frac{4-q}{q}-1\right)\leq -\frac C{\overline q}<0\,,
			\]
		showing that $M_{2+q}'(\alpha)<0$ for every $(q,\alpha)\in(0,\overline q]\times(0,\alpha_0]$ and concluding the proof of Proposition \ref{prop:a=0}.
		\end{proof}			
			\subsection{Proof of Proposition \ref{prop:mass}}
				Let $\varepsilon=\min\left\{\varepsilon_1,\overline q\right\}>0$, where $\varepsilon_1,\overline q$ are the numbers identified in Propositions \ref{prop:monl*}--\ref{prop:a=0} respectively. For every $p\in(2,2+\varepsilon)$, we then have that $M_p'(\alpha)<0$ on $(0,\alpha_0]\cup[\underline\alpha,1)$. Moreover, by \eqref{eq:L-bif-limit}, \eqref{eq:M-param} and \eqref{eq:N-bif-limit}, it follows $\displaystyle M_p(\alpha)\to\mu_*(p)$ as $\alpha\to1$, with $\mu_*(p)$ as in \eqref{eq:mu2}. Therefore, to conclude we are left to show that $M_p(\alpha)$ is strictly decreasing on $[\alpha_0,\underline\alpha]$. 
				
				To do this, fix $\overline\alpha\in(\underline\alpha,1)$, set again $q=p-2$, and take $\Phi_q, \Phi$ as in the proof of Lemma \ref{lem:I2}. Recall that we can rewrite
				\[
				(\log M_2+q(\alpha))'=\left(\frac4{q}-1\right)\frac{\ell_q'(\alpha)}{\ell_q(\alpha)}+\frac{n_q'(\alpha)}{n_q(\alpha)}
				\]
				where $n_q$ is the function defined in \eqref{eq:nq}, and
				\[
				\ell_q(\alpha):=\int_\alpha^{\beta_{2+q}(\alpha)}\frac{dt}{\sqrt{2\left(\Phi_q(t)-\Phi_q(\alpha)\right)}}=\sqrt q L_{2+q}(\alpha)\,.
				\]
				We now prove that, as $q\to0^+$,
				\begin{equation}
					\label{eq:convbeta}
					\beta_{2+q}\to \beta\qquad\text{in }C^1(\alpha_0,\overline\alpha)\,,
				\end{equation}
				where $\beta(\alpha)\in (1,\sqrt{e})$ is the unique solution of $\Phi(\beta(\alpha))=\Phi(\alpha)$ for every $\alpha\in [\alpha_0,\overline\alpha]$, and
				\begin{equation}
					\label{eq:convln}
					\ell_q\to\ell\,,\qquad \ell_q'\to\ell'\,,\qquad n_q\to n\,,\qquad n_q'\to n'
				\end{equation}
				uniformly on $[\alpha_0,\overline\alpha]$, where
				\begin{equation}
				\label{eq:ln}
				\ell(\alpha):=\int_\alpha^{\beta(\alpha)}\frac{dt}{\sqrt{2\left(\Phi(t)-\Phi(\alpha)\right)}}\,,\quad n(\alpha):=\int_\alpha^{\beta(\alpha)}\frac{t^2dt}{\sqrt{2\left(\Phi(t)-\Phi(\alpha)\right)}}\,.
				\end{equation}
				As for \eqref{eq:convbeta}, it is enough to note that $\beta_{2+q}(\alpha)$ is defined by the identity $\Phi_q(\beta_{2+q}(\alpha))-\Phi_q(\alpha)=0$, and the same for $\beta(\alpha)$ with $\Phi(\beta(\alpha))-\Phi(\alpha)=0$. Since $\beta_{2+q},\beta$ are bounded away from $1$ uniformly in $\alpha\in[\alpha_0,\overline\alpha]$ and $\beta_{2+q}'(\alpha)=\Phi_q'(\alpha)/\Phi_q'(\beta_{2+q}(\alpha))$, $\beta'(\alpha)=\Phi'(\alpha)/\Phi'(\beta(\alpha))$, the uniform $C^2$ convergence of $\Phi_q$ to $\Phi$ on compact subsets of  $(0,+\infty)$ and the fact that $\Phi_q'(s),\Phi'(s)$ are bounded away from 0 as $s$ is bounded away from 1 directly imply the uniform $C^1$ convergence of $\beta_{2+q}$ to $\beta$ on $[\alpha_0,\overline\alpha]$ as $q\to0^+$.
				
				We then turn to \eqref{eq:convln}. Note first that, changing the variable $\displaystyle t=T_q(\alpha,\theta):=\alpha+(\beta_{2+q}(\alpha)-\alpha)\sin^2\theta$,
				\begin{equation}
					\label{eq:newlnp}
					\ell_q(\alpha)=\sqrt2 \int_0^{\frac\pi2}A_q(\alpha,\theta)^{-1/2}\,d\theta\,,\qquad n_q(\alpha)=\sqrt2\int_0^{\frac\pi2} T_q(\alpha,\theta)^2 A_q(\alpha,\theta)^{-1/2}\,d\theta
				\end{equation}
				with
				\[
				A_q(\alpha,\theta):=\frac{\Phi_q(T_q(\alpha,\theta))-\Phi_q(\alpha)}{(\beta_{2+q}(\alpha)-\alpha)^2\sin^2\theta\cos^2\theta}
				\]
				for every $\theta\in(0,\pi/2)$. Analogously, setting $T(\alpha,\theta):=\alpha+(\beta(\alpha)-\alpha)\sin^2\theta$ and changing the variable in the definition of $\ell$ and $n$ we obtain
				\begin{equation}
					\label{eq:newlp}
					\ell(\alpha)=\sqrt2 \int_0^{\frac\pi2}A(\alpha,\theta)^{-1/2}\,d\theta\,,\qquad n(\alpha)=\sqrt2\int_0^{\frac\pi2} T(\alpha,\theta)^2 A(\alpha,\theta)^{-1/2}\,d\theta
				\end{equation}
				with
				\[
				A(\alpha,\theta):=\frac{\Phi(T(\alpha,\theta))-\Phi(\alpha)}{(\beta(\alpha)-\alpha)^2\sin^2\theta\cos^2\theta}\,,\qquad\forall \theta\in(0,\pi/2)\,.
				\]
				Note that $A_q(\alpha,\theta), A(\alpha,\theta)$ extend to $C^1$ functions on $\displaystyle [\alpha_0,\overline\alpha]\times[0,\pi/2]$ (it is enough to write $s=\sin^2\theta$ and recall that $\Phi_q,\Phi$ are $C^2$ on $[\alpha_0,\overline\alpha]$). Moreover, the uniform $C^2$ convergence of $\Phi_q$ to $\Phi$ on $[\alpha_0,\overline\alpha]$ as $q\to0^+$ directly yields the uniform $C^1$ convergence of $A_q$ to $A$ on $[\alpha_0,\overline\alpha]\times[0,\pi/2]$ as $q\to 0^+ $.

Furthermore, since the continuous extensions at the endpoints $\theta=0$ and $\theta=\pi/2$ give
				\[
				A(\alpha,0)=\frac{\Phi'(\alpha)}{\beta(\alpha)-\alpha}=\frac{-\alpha\log\alpha}{\beta(\alpha)-\alpha}>0\,,\qquad A(\alpha,\pi/2)=-\frac{\Phi'(\beta(\alpha))}{\beta(\alpha)-\alpha}=\frac{\beta(\alpha)\log\beta(\alpha)}{\beta(\alpha)-\alpha}>0
				\]
				uniformly on $[\alpha_0,\overline\alpha]$, it is readily seen that $A$ is strictly positive on $[\alpha_0,\overline\alpha]\times[0,\pi/2]$, and by continuity the same is true for $A_q$ whenever $q$ is sufficiently close to $0$. Therefore, combining with \eqref{eq:newlnp} and \eqref{eq:newlp} proves \eqref{eq:convln}, in turn yielding (by the strict positivity of $\ell, n$ on $[\alpha_0,\overline\alpha]$)
				\begin{equation}
					\label{eq:convfrac}
					\frac{\ell_q'}{\ell_q}\to\frac{\ell'}{\ell}\,,\qquad\frac{n_q'}{n_q}\to\frac{n'}n\qquad\text{as }q\to0^+
				\end{equation}
				uniformly on $[\alpha_0,\overline\alpha]$. Now, the Appendix below proves that $\ell'(\alpha)<0$ on $[\alpha_0,\overline\alpha]$, that together with the strict positivity of $\ell$ gives 
				\[
				\max_{\alpha\in[\alpha_0,\overline\alpha]}\frac{\ell'(\alpha)}{\ell(\alpha)}<0\,.
				\]
				By \eqref{eq:convfrac} we then have
				\[
				\max_{\alpha\in[\alpha_0,\overline\alpha]}\frac{\ell_q'(\alpha)}{\ell_q(\alpha)}<0\,,\qquad\max_{\alpha\in[\alpha_0,\overline\alpha]}\left|\frac{n_q'}{n_q}\right|\leq C\,,
				\]
				for a suitable constant $C>0$ and for every $q$ close enough to $0$. Combining with \eqref{eq:derlog} shows that, whenever $q$ is sufficiently close to $0$, 
				\[
				(\log M_{2+q}(\alpha))'<0\qquad\forall \alpha\in[\alpha_0,\overline\alpha]\,,
				\]
				completing the proof of Proposition \ref{prop:mass}.
		
			\section*{Appendix}
			For the sake of completeness, here is a self-contained proof of the fact that $\ell'(\alpha)<0$ for every $\alpha\in(0,1)$, where $\ell(\alpha)$ is the function defined in \eqref{eq:ln}.  
			
			Set $\displaystyle V(s):=\Phi(1)-\Phi(s)=\frac14-\frac{s^2}{2}+\frac{s^2\log s}{2}$, so that $V(1)=V'(1)=0$ and $V$ has a global minimum point at $s=1$.  To any number $h\in(0,1/4)$ it corresponds a periodic orbit of the ODE $x''(t)+V'(x(t))=0$ with semiperiod given by
			\[
			T(h)=\int_{x_-(h)}^{x_+(h)}\frac{dy}{\sqrt{2(h-V(y))}}\,,
			\]
			where $0<x_-(h)<1<x_+(h)$ are such that $x'(0)=x'(T(h))=0$, $x(0)=x_-(h)$, $x(T(h))=x_+(h)$. 
			
			Set then $X(y):=\text{\normalfont sgn}(y-1)\sqrt{2V(y)}$. Since $\displaystyle V(s)=\frac{(s-1)^2}{2}+O((s-1)^3)$ locally around $s=1$, $X$ is smooth on $(0,+\infty)$. Moreover, $X'(y)=\text{\normalfont sgn}(y-1)V'(y)/\sqrt{2V(y)}>0$ for every $y>0$, $y\neq1$, and $X'(1)=1$ by continuity. Hence, $X(y)$ is a smooth, strictly increasing function on $(0,+\infty)$, and it thus admits a smooth, strictly increasing inverse function $y(X)$ with $y(0)=1$. By definition, 
			\begin{equation}
			\label{eq:VX}
			V(y(X))=\frac{X^2}{2}\,,
			\end{equation}
			so that, setting $R:=\sqrt{2h}$, we have $\displaystyle X(x_-(h))=-R$, $\displaystyle X(x_+(h))=R$. Therefore, changing the variable in the formula for $T(h)$ above, we obtain
			\[
			T(h)=\int_{-R}^{R}\frac{y'(X)dX}{\sqrt{R^2-X^2}}=\int_{-\frac\pi2}^{\frac\pi2}y'(R\sin\theta)\,d\theta=:f(R)\,.
			\]
			By definition, $\ell(\alpha)$ as in \eqref{eq:ln} satisfies $\ell(\alpha)=f(R(\alpha))$, where $R(\alpha)=\sqrt{2h(\alpha)}=\sqrt{2V(\alpha)}>0$, for every $\alpha\in(0,1)$. As a consequence, $\ell'(\alpha)=f'(R(\alpha))R'(\alpha)$, and since $\displaystyle R'(\alpha)=\frac{V'(\alpha)}{\sqrt{2V(\alpha)}}=\frac{V'(\alpha)}{R(\alpha)}<0$ as $\alpha\in(0,1)$, to show that $\ell'(\alpha)<0$ it is enough to prove that $f'(R(\alpha))>0$ for every $\alpha\in(0,1)$. Observe that, differentiating inside the integral and then integrating by parts, 
			\begin{equation}
			\label{eq:f'}
			f'(R)=\int_{-\frac\pi2}^{\frac\pi2}y''(R\sin\theta)\sin\theta\,d\theta=R\int_{-\frac\pi2}^{\frac\pi2}y'''(R\sin\theta)\cos^2\theta\,d\theta\,.		
			\end{equation}
			Moreover, differentiating \eqref{eq:VX} yields $\displaystyle y'(X)=\frac{X}{V'(y(X))}$, so that setting $\displaystyle C(s):=\frac{V(s)}{V'(s)^2}$, we have 
			\[
			\frac{y'(X)^2}{2}=C(y(X))\,.
			\]
			Note that the previous expansion of $V(s)$ at $s=1$ ensures that $C\in C^2(0,+\infty)$ (the values at $s=1$ being defined by continuity).  Differentiating twice the previous identity leads to
			\begin{equation}
			\label{eq:y'''}
			y'''(X)=C''(y(X))y'(X).
			\end{equation}
			Since $y(X)$ is strictly increasing, $y'(X)>0$. Moreover, $C''(s)>0$ for every $s>0$. Indeed, differentiating twice the definition of $C(s)$ gives
			\[
			C''(s)=\frac{H(\log s)}{2(s\log s)^4}\,,\qquad H(s):=e^{2s}\left(s^2+s-3\right)+3s^2+5s+3\,.
			\]
			Since
			\[
			H'(s)=e^{2s}(2s^2+4s-5)+6s+5,\quad H''(s)=e^{2s}(4s^2+12s-6)+6,\quad H'''(s)=e^{2s}(8s^2+32s),
			\]
			it is readily seen that $s=0$ is the unique global minimum point of $H(s)$, that together with $H(0)=0$ entails $H(s)>0$ for every $s\neq0$. Combining with \eqref{eq:f'} and \eqref{eq:y'''} and recalling that $R(\alpha)>0$ for every $\alpha\in(0,1)$, we obtain $f'(R(\alpha))>0$ whenever $\alpha\in(0,1)$, and we conclude.
					
	\section*{Statements and Declarations}
	
  \noindent The authors declare that they have  no conflict of interest and that data sharing is not applicable to this article as it has no associated data. 
  
  \section*{Acknowledgements}	
  \noindent Lun Guo is supported by the Program of China Scholarship Council (Grant number: 202508420089).

	\end{document}